\documentclass[11pt]{article}

\usepackage[T1]{fontenc}
\usepackage[utf8]{inputenc}
\usepackage{lmodern}
\usepackage{amsmath,amssymb,amsthm,mathtools}
\usepackage{enumitem}
\usepackage{hyperref}
\usepackage[nameinlink,capitalize]{cleveref}
\usepackage{geometry}
\hypersetup{
  hidelinks,
  pdftitle={The homotopy type of the clique complex of the partition graph},
  pdfauthor={Fedor B. Lyudogovskiy}
}

\numberwithin{equation}{section}

\newtheorem{theorem}{Theorem}[section]
\newtheorem{proposition}[theorem]{Proposition}
\newtheorem{lemma}[theorem]{Lemma}
\newtheorem{corollary}[theorem]{Corollary}
\theoremstyle{definition}
\newtheorem{definition}[theorem]{Definition}
\newtheorem{example}[theorem]{Example}
\theoremstyle{remark}
\newtheorem{remark}[theorem]{Remark}

\DeclareMathOperator{\Cl}{Cl}
\newcommand{\Par}{\mathrm{Par}}

\title{The homotopy type of the clique complex of the partition graph}
\author{Fedor B. Lyudogovskiy}
\date{}

\begin{document}
\maketitle

\begin{abstract}
For each positive integer \(n\), let \(G_n\) be the graph whose vertices are the partitions of \(n\), with edges corresponding to elementary transfers of one cell between two parts, followed by reordering. Let
\[
K_n:=\Cl(G_n)
\]
be the clique complex of \(G_n\).

We prove that \(K_n\) is homotopy equivalent to a wedge of \(2\)-spheres:
\[
K_n \simeq \bigvee^{\,b_n} S^2,
\qquad
b_n=\chi(K_n)-1.
\]
Thus the homotopy type of \(K_n\) is completely determined by its Euler characteristic.

The proof has three main ingredients.

First, we classify all cliques in \(G_n\) via two canonical families of simplices, called star-simplices and top-simplices, and use them to build a canonical cover \(\mathcal C_n\) of \(K_n\).

Second, we pass to the corresponding nerve \(N_n:=N(\mathcal C_n)\), construct a second natural cover of \(N_n\), and show via the intersection poset of that cover that \(K_n\) has the homotopy type of a CW-complex of dimension at most \(2\).

Third, using an explicit height function on partitions, we prove that \(K_n\) is connected and simply connected. It follows that the reduced homology of \(K_n\) is concentrated in degree \(2\), where its rank is \(\chi(K_n)-1\), and therefore \(K_n\) has the homotopy type claimed above.

We conclude with remarks on Euler characteristics, small examples, and the integer sequences arising from these complexes.
\end{abstract}

\noindent\textbf{Keywords.}
integer partitions; partition graph; clique complex; simplicial complex; homotopy type; nerve theorem; order complex; Euler characteristic.

\medskip
\noindent\textbf{MSC 2020.}
05A17, 05C25, 55U10, 57Q70.

\section{Introduction}

Graphs on integer partitions arise naturally when partitions are viewed not as isolated combinatorial objects, but as vertices of a graph linked by elementary transformations. In this paper we consider the graph \(G_n\) whose vertices are the partitions of \(n\), and whose edges correspond to elementary transfers of one cell between two parts, followed by reordering. The associated clique complex
\[
K_n:=\Cl(G_n)
\]
records the higher-order compatibility of such moves.

The graph \(G_n\), or closely related transfer graphs on partitions, also appears in the theory of combinatorial Gray codes, where one studies Hamiltonian paths and cycles under minimal-change operations on partitions. In particular, Gray-code constructions based on transferring one unit between parts go back to Savage~\cite{Savage1989}; for restricted families of partitions see Rasmussen, Savage, and West~\cite{RasmussenSavageWest1995}, and for a modern survey see M\"utze~\cite{Mutze2023}.

Related algebro-geometric graphs on monomial ideals were studied by Altmann and Sturmfels~\cite{AltmannSturmfels2005} and by Hering and Maclagan~\cite{HeringMaclagan2012}. Their edge relation is defined via torus-orbit geometry rather than by direct combinatorial cell transfers, and in the \(T\)-graph setting it may depend on the ground field~\cite{HeringMaclagan2012}.

From the topological side, clique complexes of graphs have been studied in several other settings. Goyal, Shukla, and Singh determine the homotopy type of clique complexes of line graphs for a number of natural graph classes~\cite{GoyalShuklaSingh2022}, while Adamaszek studies the behaviour of clique complexes under graph powers and obtains explicit descriptions in important examples such as powers of cycles~\cite{Adamaszek2013}. These works provide useful context for the present paper, but they concern graph constructions different from the transfer graph on integer partitions considered here.

Closer to our partition-theoretic setting, Bal encodes ordinary partitions as binary words and studies the resulting partition graphs as subgraphs of hypercubes under Hamming-distance adjacency~\cite{Bal2022}. This is again a different graph model: in Bal's graph, edges come from changing one bit in a partition word, whereas in our graph \(G_n\) an edge corresponds to transferring one cell between two parts of a Ferrers diagram. To the best of our knowledge, the homotopy type of the clique complex of this transfer graph has not been determined before.

Our main result shows that, despite the rich local combinatorics of \(K_n\), its global homotopy type admits a particularly simple description.

\begin{theorem}\label{thm:intro-main}
For every \(n\ge 1\), the clique complex \(K_n\) is homotopy equivalent to a wedge of \(2\)-spheres:
\[
K_n \simeq \bigvee^{\,b_n} S^2,
\qquad
b_n=\chi(K_n)-1.
\]
\end{theorem}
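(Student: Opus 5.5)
The plan has three stages: reduce the homotopy dimension of \(K_n\) to at most \(2\), show that \(K_n\) is connected and simply connected, and then conclude with homology and Whitehead's theorem.

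\emph{Stage 1: cliques and a first cover.} First I will classify the cliques of \(G_n\). If \(\lambda\) and \(\mu\) are adjacent, then \(\mu\) arises from \(\lambda\) by removing one cell from a part of size \(a\) and adding it to a part of size \(b\), with \(b\) allowed to be \(0\). Three partitions that are pairwise adjacent must come from moves sharing a common structure. I expect every maximal clique to be one of two kinds.
\begin{itemize}
\item A \emph{star-simplex}: all partitions obtained from a fixed \(\lambda\) by moves with a common source part, or with a common target part.
\item A \emph{top-simplex}: a triangle of the form \(\{\lambda,\ \lambda-e_i+e_j,\ \lambda-e_i+e_k\}\), or its transpose analogue, where the transfers take place inside a small configuration of three parts.
\end{itemize}
These simplices form a cover \(\mathcal C_n\) of \(K_n\) by full simplices. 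Any intersection of such simplices is again a simplex, hence empty or contractible. The nerve theorem therefore gives \(K_n\simeq N_n:=N(\mathcal C_n)\).

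\emph{Stage 2: dimension reduction.} Next I will cover \(N_n\) by the subcomplexes \(U_\lambda\) spanned by the members of \(\mathcal C_n\) that contain a given vertex \(\lambda\). Each \(U_\lambda\) is a full simplex of the nerve, so it is contractible. Using the generalized nerve lemma, \(N_n\) is homotopy equivalent to the order complex of the poset of nonempty intersections of these \(U_\lambda\).

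The step I expect to be hardest is showing that this intersection poset has homotopy dimension at most \(2\). A set of vertices contained in a common member of \(\mathcal C_n\) has at most three "independent" transfer directions, which bounds the chains up to homotopy. I would first try to exhibit a sequence of collapses, or a Quillen fiber argument, that removes all intersection types of codimension greater than \(2\). This should show that \(K_n\) is homotopy equivalent to a CW complex \(X\) of dimension at most \(2\).

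\emph{Stage 3: connectivity and the conclusion.} I will use the height function \(h(\lambda)=\sum_i (i-1)\lambda_i\), equivalently dominance-based. From any \(\lambda\neq(n)\), some move raising a larger part strictly decreases \(h\) and reaches a neighbour. So every vertex connects to \((n)\), and \(K_n\) is connected.

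For \(\pi_1\), I will show that every edge loop is null-homotopic by a descent argument. Take a loop and a vertex \(v\) on it of maximal \(h\). The path \(u\text{--}v\text{--}w\) through \(v\) can be replaced by a path through vertices of lower height, namely via the partition obtained by applying both moves from \(u\) and \(w\) "downward". The two squares or triangles involved must be filled in \(K_n\). When the two moves commute, the square is triangulated by a star- or top-simplex; checking this case by case is the technical core of the step. Induction on the maximal height and on the number of vertices attaining it then shows that \(\pi_1(K_n)=1\).

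Finally, \(X\) is a simply connected complex of dimension at most \(2\). Its reduced homology is \(\tilde H_0=\tilde H_1=0\), and \(H_2(X)\) is free, being the group of cycles in top degree. Hence \(\tilde H_*(X)\) is concentrated in degree \(2\) and has rank \(\chi(K_n)-1\). Choose maps \(S^2\to X\) representing a basis of \(H_2(X)\cong\pi_2(X)\), which is possible by Hurewicz. The induced map \(\bigvee^{b_n}S^2\to X\) is a homology isomorphism between simply connected CW complexes, so it is a homotopy equivalence by Whitehead's theorem. This proves \cref{thm:intro-main}.
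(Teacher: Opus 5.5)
Your overall architecture is the paper's: classify cliques, pass to the nerve of the cover by full simplices, cover the nerve by the anchor simplices $A_\lambda$ and study their intersection poset, then prove $\pi_1=0$ by height-function peak reduction and finish with Hurewicz and Whitehead. However, the two steps that carry the weight are missing or wrong.

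In Stage 2 you give no proof that the intersection poset has dimension at most $2$, and your heuristic is not the actual mechanism. Members of $\mathcal C_n$ are simplices of unbounded dimension, since a full star-simplex has a vertex for every admissible target. So nothing is bounded by ``three independent transfer directions''. The missing idea is rigidity of overlaps.
\begin{itemize}
\item A clique with at least three vertices lies in exactly one member of $\mathcal C_n$. For example, if it contains $\lambda,\lambda(c\to a_1),\lambda(c\to a_2)$ with $a_1\neq a_2$, comparing conjugate vectors forces every full star- or top-simplex containing these three vertices to equal $\Sigma^{\mathrm{star}}_{\max}(\lambda,c)$; the top case is dual.
\item The members containing an edge $\{\lambda,\lambda(c\to a)\}$ are among $\Sigma^{\mathrm{star}}_{\max}(\lambda,c)$, $\Sigma^{\mathrm{top}}_{\max}(\lambda,a)$ and the edge itself.
\end{itemize}
Hence every non-singleton intersection is some $A_\lambda$ or some $A_\lambda\cap A_\mu$, and nothing non-singleton lies strictly inside an $A_\lambda\cap A_\mu$. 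Every chain therefore has length at most $2$, and the order complex is literally at most $2$-dimensional; no collapses or Quillen fibres are needed.

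This argument requires the cover to consist only of full common-source and full common-target simplices. Your Stage 1 list is muddled on this point. Your ``top-simplex'' $\{\lambda,\lambda-e_i+e_j,\lambda-e_i+e_k\}$ has a common source, so it is a face of one of your star-simplices. If such faces were kept as separate members, a triangle would lie in two members and chains of length $3$ can appear.

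In Stage 3 the filling you propose does not exist. Write the peak's neighbours as $u=v(c_1\to a_1)$ and $w=v(c_2\to a_2)$. If $c_1=c_2$ or $a_1=a_2$, then $\{u,v,w\}$ is already a triangle, so the relevant case is $c_1\neq c_2$ and $a_1\neq a_2$. Then $u\not\sim w$, because $w'-u'=e_{k(c_1)}-e_{\ell(a_1)}-e_{k(c_2)}+e_{\ell(a_2)}$ is never of the form $-e_p+e_q$. When the moves commute (the four columns are distinct), the vertex $x$ obtained by applying both moves is not adjacent to $v$ either. So $u-v-w-x$ is an induced $4$-cycle of $G_n$, and no star- or top-simplex triangulates it.

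The vertex you are missing is the mixed move $\nu=v(c_2\to a_1)$, with labels chosen so that $r(c_1)<r(c_2)$.
\begin{itemize}
\item Since $h(u)<h(v)$ gives $r(a_1)<r(c_1)<r(c_2)$, removing $c_2$ leaves $a_1$ addable. So $\nu$ is a neighbour of $v$ with $h(\nu)-h(v)=r(a_1)-r(c_2)<0$ (your $h$ differs from the paper's by the constant $n$).
\item $\{v,u,\nu\}$ is a top-triangle (common target $a_1$), and $\{v,\nu,w\}$ is a star-triangle (common source $c_2$).
\end{itemize}
These two triangles give $u-v-w\simeq u-\nu-w$ rel endpoints, which is the paper's peak-reduction lemma; $x$ is not needed. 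Adjacent partitions always have different heights, so every maximal-height vertex of a loop is a strict peak, and your induction on (maximal height, multiplicity) then goes through. The final Hurewicz--Whitehead step is fine as written.
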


Equivalently, the homotopy type of \(K_n\) is completely determined by its Euler characteristic.

The proof proceeds in three stages.

First, we analyze cliques in the partition graph \(G_n\). We show that all triangles, and hence all cliques, are governed by two canonical local mechanisms. These give rise to two distinguished families of simplices in \(K_n\), called \emph{star-simplices} and \emph{top-simplices}. We prove that every clique in \(G_n\) is contained in one of these two families, and from this obtain a complete classification of the maximal simplices of \(K_n\).

Second, we use these canonical simplices to construct a cover \(\mathcal C_n\) of \(K_n\) by full star- and full top-simplices. This cover is a good cover, so its nerve
\[
N_n:=N(\mathcal C_n)
\]
is homotopy equivalent to \(K_n\). We then study a second natural cover of the nerve \(N_n\), indexed by the vertices of \(K_n\). The associated intersection poset has order complex of dimension at most \(2\), and this yields a \(2\)-dimensional model for both \(N_n\) and \(K_n\).

Third, we prove that \(K_n\) is connected and simply connected. The connectedness is established directly. For simple connectedness, we introduce an explicit height function on partitions and study edge-loops in the \(1\)-skeleton of \(K_n\), which is the graph \(G_n\). A local peak-reduction lemma shows that any loop can be simplified by replacing a maximal-height vertex by a homotopic path with strictly lower intermediate height. Since adjacent vertices always have different heights, maximal-height vertices are automatically isolated peaks, and an induction on loop complexity yields \(\pi_1(K_n)=0\).

At this point the proof of \Cref{thm:intro-main} follows from standard facts about simply connected CW-complexes of dimension at most \(2\). Indeed, \(K_n\) has the homotopy type of a simply connected CW-complex of dimension at most \(2\). Hence its reduced homology is concentrated in degree \(2\), where the rank is determined by the reduced Euler characteristic:
\[
\operatorname{rank}\widetilde H_2(K_n)=\widetilde\chi(K_n)=\chi(K_n)-1.
\]
Therefore \(K_n\) is homotopy equivalent to a wedge of exactly \(\chi(K_n)-1\) copies of \(S^2\).

The paper is organized as follows. In Section~\ref{sec:partitions-moves-canonical-simplices} we introduce the partition graph, the clique complex, the canonical simplices, and the height function. In Section~\ref{sec:classification-of-cliques} we classify triangles, cliques, and maximal simplices. In Section~\ref{sec:good-cover-and-nerve} we construct a canonical good cover of \(K_n\) by full star- and full top-simplices and pass to its nerve. In Section~\ref{sec:anchor-cover-2-model} we prove that \(N_n\), and hence also \(K_n\), has the homotopy type of a CW-complex of dimension at most \(2\). In Section~\ref{sec:connectedness-and-simple-connectedness} we prove connectedness and simple connectedness. Section~\ref{sec:homotopy-type} contains the proof of the main theorem. In the final section we discuss Euler characteristics, small examples, and the resulting integer sequences.

\medskip

The qualitative conclusion is the following contrast: although the local simplex structure of \(K_n\) becomes arbitrarily complicated as \(n\) grows, its global homotopy type always remains that of a wedge of \(2\)-spheres.

\section{Partitions, elementary moves, and canonical simplices}
\label{sec:partitions-moves-canonical-simplices}

In this section we fix notation for partitions and Ferrers diagrams, define the partition graph \(G_n\), and introduce the two canonical families of simplices that govern the clique structure of the clique complex
\[
K_n:=\Cl(G_n).
\]

\subsection{Partitions and Ferrers diagrams}

A partition of a positive integer \(n\) is a finite weakly decreasing sequence
\[
\lambda=(\lambda_1,\lambda_2,\dots,\lambda_\ell),
\qquad
\lambda_1\ge \lambda_2\ge \cdots \ge \lambda_\ell>0,
\qquad
\sum_{i=1}^{\ell}\lambda_i=n.
\]
We write \(\lambda\vdash n\) to indicate that \(\lambda\) is a partition of \(n\), and denote by \(\Par(n)\) the set of all partitions of \(n\).

We identify \(\lambda\) with its Ferrers diagram. Thus \(\lambda\) may be viewed either as a sequence of row lengths or as a left-justified arrangement of cells with \(\lambda_i\) cells in the \(i\)-th row. We freely pass between these two descriptions. For standard background on integer partitions and Ferrers diagrams, see, for example, Andrews~\cite{Andrews1976}.

A \emph{removable corner} of \(\lambda\) is a cell whose removal again yields a Ferrers diagram of a partition. Equivalently, it is the last cell in a row whose length is strictly greater than the length of the next row, where by convention \(\lambda_{\ell+1}=0\).

An \emph{addable corner} of \(\lambda\) is a position in which a cell can be added so that the resulting diagram is again a Ferrers diagram of a partition.

If \(c\) is a removable corner or \(a\) an addable corner, we write \(r(c)\) and \(r(a)\) for their row numbers, and \(\operatorname{col}(c)\) and \(\operatorname{col}(a)\) for their column numbers. If \(\lambda'\) denotes the conjugate partition, then
\[
r(c)=\lambda'_{\operatorname{col}(c)}
\]
for every removable corner \(c\), while
\[
r(a)=\lambda'_{\operatorname{col}(a)}+1
\]
for every addable corner \(a\).
In particular, a removable corner is uniquely determined by its column index, and an addable corner is uniquely determined by its column index, since the displayed formulas recover the corresponding row number.

For brevity, we sometimes write
\[
k(c):=\operatorname{col}(c),
\qquad
\ell(a):=\operatorname{col}(a)
\]
for the column indices of removable and addable corners. This notation should not be confused with the index \(\ell\) in
\[
\lambda=(\lambda_1,\dots,\lambda_\ell),
\]
which denotes the number of nonzero parts of \(\lambda\).

\subsection{The partition graph}

Fix \(n\ge 1\). The \emph{partition graph} \(G_n\) is the graph whose vertex set is \(\Par(n)\). Two partitions \(\lambda,\mu\in\Par(n)\) are adjacent if one can be obtained from the other by removing one cell from a row of the Ferrers diagram, adding it to another row, and then reordering the row lengths into weakly decreasing order.

More explicitly, let \(c\) be a removable corner of \(\lambda\) and \(a\) an addable corner of \(\lambda\), with \(r(c)\neq r(a)\). If removing the cell \(c\) from the Ferrers diagram of \(\lambda\), adding a cell at \(a\), and then reordering the row lengths yields a partition of \(n\) different from \(\lambda\), we call the transfer \emph{admissible} and write
\[
\lambda(c\to a)
\]
for the resulting partition. Every neighbor of \(\lambda\) in \(G_n\) is obtained from \(\lambda\) by such an admissible transfer.

On the level of conjugate partitions, an admissible transfer \(\lambda(c\to a)\) changes exactly two parts: it decreases the part indexed by \(\operatorname{col}(c)\) by \(1\) and increases the part indexed by \(\operatorname{col}(a)\) by \(1\). Equivalently, if \(e_j\) denotes the \(j\)-th standard basis vector, then
\[
(\lambda(c\to a))' = \lambda' - e_{\operatorname{col}(c)} + e_{\operatorname{col}(a)}.
\]
In particular, two partitions are adjacent in \(G_n\) if and only if their conjugates differ by decreasing one part by \(1\) and increasing another part by \(1\).

\begin{lemma}\label{lem:adjacency-conjugate-criterion}
Let \(\lambda,\mu\in\Par(n)\). Then \(\lambda\) and \(\mu\) are adjacent in \(G_n\) if and only if there exist a removable corner \(c\) of \(\lambda\) and an addable corner \(a\) of \(\lambda\) such that the transfer \(\lambda(c\to a)\) is admissible and
\[
\mu'=\lambda'-e_{\operatorname{col}(c)}+e_{\operatorname{col}(a)}.
\]
\end{lemma}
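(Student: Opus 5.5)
The plan is to prove both directions by unwinding the definition of adjacency and then tracking how a single-cell transfer acts on the columns of the Ferrers diagram. The one nontrivial input is the conjugate formula \((\lambda(c\to a))'=\lambda'-e_{\operatorname{col}(c)}+e_{\operatorname{col}(a)}\). I will verify it directly rather than merely quote it.

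\emph{Forward direction.} Suppose \(\lambda\) and \(\mu\) are adjacent. By definition, \(\mu\) arises from \(\lambda\) by deleting one cell from some row \(i\), adding one cell to another row \(j\neq i\), and reordering. First I normalize the move.

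Deleting a cell from a row of length \(\lambda_i\) and re-sorting gives the same multiset of row lengths as deleting the last cell of the lowest row of length \(\lambda_i\). That cell is a removable corner \(c\), lying in column \(\lambda_i\).

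Similarly, adding a cell to a row of length \(\lambda_j\) is equivalent, after re-sorting, to adding a cell at the end of the highest row of length \(\lambda_j\). That position is an addable corner \(a\), lying in column \(\lambda_j+1\).

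There is one subtlety: the normalized rows could coincide, that is, \(r(c)=r(a)\). This can only happen when \(\lambda_i=\lambda_j\). In that case the original move takes a cell from one row of this length and gives it to a different row of the same length. I then choose \(c\) in the lowest such row and \(a\) in the highest such row; these are distinct rows because the length class contains at least two rows. Since \(\mu\neq\lambda\), the transfer is admissible.

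It remains to identify \(\mu'\) with the stated formula. I argue via multisets: \(\mu\) has row-length multiset \(\{\lambda_k\}-\{\lambda_i,\lambda_j\}+\{\lambda_i-1,\lambda_j+1\}\). The number of rows of length at least \(t\) is \(\mu'_t\). Compared with \(\lambda'_t\), this count drops by one exactly at \(t=\lambda_i\) and rises by one exactly at \(t=\lambda_j+1\), and these two effects cancel when \(\lambda_i=\lambda_j+1\). Hence \(\mu'=\lambda'-e_{\lambda_i}+e_{\lambda_j+1}=\lambda'-e_{\operatorname{col}(c)}+e_{\operatorname{col}(a)}\).

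\emph{Converse direction.} Suppose an admissible transfer \(\lambda(c\to a)\) exists with \(\mu'=\lambda'-e_{\operatorname{col}(c)}+e_{\operatorname{col}(a)}\). The same column-count computation gives \((\lambda(c\to a))'=\mu'\). Conjugation is a bijection on \(\Par(n)\), so \(\mu=\lambda(c\to a)\). This partition differs from \(\lambda\) by admissibility, and it arises by moving one cell between distinct rows, so \(\lambda\) and \(\mu\) are adjacent.

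The main obstacle is the normalization step, specifically the equal-length case \(r(c)=r(a)\) when \(\lambda_i=\lambda_j\). I will handle it by the choice of lowest and highest rows described above. The remainder is bookkeeping with the identity \(\mu'_t=\#\{k:\mu_k\ge t\}\).
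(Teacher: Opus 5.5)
Your proof is correct and follows the same route as the paper. The paper's proof only invokes the conjugate-coordinate description of an admissible transfer, which it asserts without verification just before the lemma. You additionally check the two facts left implicit there: that any one-cell transfer between distinct rows can be normalized to a corner-to-corner transfer (including the equal-length case where a naive choice would give \(r(c)=r(a)\)), and the column-count identity \(\mu'_t=\lambda'_t-[t=\lambda_i]+[t=\lambda_j+1]\).

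One point is worth adding, because the definition is phrased symmetrically (``one can be obtained from the other''). If \(\lambda\) arises from \(\mu\) by a transfer, then \(\mu\) arises from \(\lambda\) by the reverse transfer between two distinct rows. So assuming that \(\mu\) arises from \(\lambda\) loses no generality.
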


\begin{proof}
This is exactly the conjugate-coordinate description of an admissible transfer recorded above.
\end{proof}

We write
\[
K_n:=\Cl(G_n)
\]
for the clique complex of \(G_n\). Thus the simplices of \(K_n\) are precisely the finite cliques in \(G_n\).

\begin{lemma}\label{lem:basic-move-properties}
Let \(\lambda,\mu\in\Par(n)\).
\begin{enumerate}
    \item If \(\lambda\) is adjacent to \(\mu\) in \(G_n\), then \(\mu\) is adjacent to \(\lambda\).
    \item If \(\mu=\lambda(c\to a)\), then \(\lambda\) and \(\mu\) differ by exactly one elementary transfer of one cell.
    \item Every edge of \(G_n\) preserves the total number of cells and hence stays inside \(\Par(n)\).
\end{enumerate}
\end{lemma}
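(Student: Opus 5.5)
The three assertions of \Cref{lem:basic-move-properties} are essentially bookkeeping, and I would derive all of them from the conjugate-coordinate description in \Cref{lem:adjacency-conjugate-criterion}. The guiding idea is that adjacency of \(\lambda\) and \(\mu\) means \(\mu'\) is obtained from \(\lambda'\) by subtracting \(1\) from one coordinate and adding \(1\) to another. That condition is visibly reversible and preserves the sum of the coordinates.

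For item (2), I will simply unwind the definition of \(\lambda(c\to a)\). It is obtained by removing the single cell \(c\), adding a single cell at \(a\), and reordering the rows. Reordering does not change the multiset of row lengths, so \(\lambda\) and \(\mu\) differ as multisets of parts by exactly one part decreasing by \(1\) and another increasing by \(1\), that is, by one elementary transfer. For item (3), I note that removing one cell and adding one cell leaves the total count equal to \(n\). Reordering keeps the row lengths positive, or drops a row that becomes zero, and makes them weakly decreasing, so \(\mu\in\Par(n)\). Equivalently, \(\sum_j \mu'_j=\sum_j\lambda'_j-1+1=n\).

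Item (1), symmetry, is the only step needing care, because the definition is phrased asymmetrically in terms of a removable corner and an addable corner of \(\lambda\). My plan is to argue at the level of multisets of parts. Write \(\mu\) as \(\lambda\) with one part \(p\) replaced by \(p-1\) (deleted if \(p=1\)) and another part \(q\) replaced by \(q+1\), where the parts are taken from distinct rows. Then \(\lambda\) is recovered from \(\mu\) by taking one cell from the part \(q+1\) and giving it to the part \(p-1\), or to a new row of length \(0\) if \(p=1\). These are distinct rows of \(\mu\). To see that this reverse move is admissible in the paper's sense, I would choose representatives: take \(c^*\) to be the last cell of the lowest row of \(\mu\) of length \(q+1\), which is a removable corner, and \(a^*\) the addable position at the end of the highest row of \(\mu\) of length \(p-1\), or the first cell of the new row. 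After reordering, the result is \(\lambda\), and it differs from \(\mu\) because \(\lambda\neq\mu\).

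The one obstacle I anticipate is the degenerate case \(q+1=p-1\), in which the removable corner and the addable corner might seem to lie in the same row. This case cannot occur. If \(q+1=p-1\), then \(\mu\) would have two parts of that common length coming from the modified rows, and \(c^*\) and \(a^*\) can be taken in different rows of that length, since the highest row of a given length and the lowest row of that length are distinct whenever at least two such rows exist. Alternatively, in conjugate coordinates, \(\lambda'=\mu'-e_{\operatorname{col}(a)}+e_{\operatorname{col}(c)}\) with \(\operatorname{col}(c)\neq\operatorname{col}(a)\) forced by \(\mu\neq\lambda\). \Cref{lem:adjacency-conjugate-criterion} then applies directly with the roles of \(\lambda\) and \(\mu\) swapped, once one checks that the corresponding corners of \(\mu\) exist, which follows because \(\mu'\) and \(\lambda'\) are both partitions.
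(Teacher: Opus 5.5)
Your proposal is correct. For items (2) and (3) it matches the paper, whose entire proof is that all three items are immediate from the definition.

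The genuine difference is item (1). In the paper it is tautological, because adjacency in \(G_n\) is defined by the symmetric phrase ``one can be obtained from the other''. You instead prove the stronger fact that a single admissible transfer is reversible: if \(\mu=\lambda(c\to a)\), then \(\lambda=\mu(c^*\to a^*)\) for an admissible pair of corners of \(\mu\). That extra work is worthwhile. It is exactly what justifies the paper's unproved sentence that every neighbor of \(\lambda\) has the form \(\lambda(c\to a)\). That sentence underlies the ``only if'' direction of Lemma~\ref{lem:adjacency-conjugate-criterion} and every later step of the form ``since \(\nu\sim\lambda\), write \(\nu=\lambda(d\to b)\)''.

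Your multiset argument, using the lowest row of length \(q+1\) and the highest row of length \(p-1\), is sound. One wording point: what cannot happen is the two chosen positions lying in the same row. The case \(q+1=p-1\) itself does occur, e.g.\ \((3,1)\to(2,2)\).

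Your conjugate-coordinate remark can be made shorter and more concrete. For a nontrivial transfer no reordering is actually needed, since the new column lengths \(\lambda'-e_{\operatorname{col}(c)}+e_{\operatorname{col}(a)}\) are already weakly decreasing. So you can take \(c^*\) to be the cell just placed at \(a\), which is the removable corner of \(\mu\) in row \(r(a)\) and column \(\operatorname{col}(a)\). Take \(a^*\) to be the vacated position of \(c\), which is the addable corner of \(\mu\) in row \(r(c)\) and column \(\operatorname{col}(c)\). These lie in distinct rows because \(r(a)\neq r(c)\), so the degenerate case disappears. This row check is also the admissibility condition the conjugate criterion requires, and your compressed remark leaves it implicit.
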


\begin{proof}
All three statements are immediate from the definition.
\end{proof}

\subsection{Canonical simplices}

We now introduce the two canonical families of simplices that govern the clique structure of \(K_n\).

\begin{definition}\label{def:star-simplex}
Let \(\lambda\vdash n\), and let \(c\) be a removable corner of \(\lambda\). Let \(A\) be a finite set of addable corners such that every transfer \(\lambda(c\to a)\) is admissible. The simplex
\[
\{\lambda\}\cup\{\lambda(c\to a):a\in A\}
\]
is called a \emph{star-simplex} based at \((\lambda,c)\).
\end{definition}

\begin{definition}\label{def:top-simplex}
Let \(\lambda\vdash n\), and let \(a\) be an addable corner of \(\lambda\). Let \(C\) be a finite set of removable corners such that every transfer \(\lambda(c\to a)\) is admissible. The simplex
\[
\{\lambda\}\cup\{\lambda(c\to a):c\in C\}
\]
is called a \emph{top-simplex} based at \((\lambda,a)\).
\end{definition}

These definitions reflect the two basic local mechanisms that generate triangles in \(G_n\): in the star case the removable corner is fixed and the addable corner varies, whereas in the top case the addable corner is fixed and the removable corner varies. In Section~\ref{sec:classification-of-cliques} we prove that these two mechanisms account for all triangles and, more generally, for all cliques in \(G_n\); see \Cref{thm:triangle-classification,thm:clique-classification}.

\subsection{Full star- and top-simplices}

For later use we also record the maximal simplices inside the two canonical families.

\begin{definition}\label{def:full-star-top}
Let \(\lambda\vdash n\).
\begin{enumerate}
    \item For a removable corner \(c\) of \(\lambda\), define
    \[
    A_{\max}(\lambda,c)
    :=
    \{\,a : \lambda(c\to a)\text{ is admissible}\,\}.
    \]
    The corresponding \emph{full star-simplex} is
    \[
    \Sigma^{\mathrm{star}}_{\max}(\lambda,c)
    :=
    \{\lambda\}\cup\{\lambda(c\to a):a\in A_{\max}(\lambda,c)\}.
    \]

    \item For an addable corner \(a\) of \(\lambda\), define
    \[
    C_{\max}(\lambda,a)
    :=
    \{\,c : \lambda(c\to a)\text{ is admissible}\,\}.
    \]
    The corresponding \emph{full top-simplex} is
    \[
    \Sigma^{\mathrm{top}}_{\max}(\lambda,a)
    :=
    \{\lambda\}\cup\{\lambda(c\to a):c\in C_{\max}(\lambda,a)\}.
    \]
\end{enumerate}
\end{definition}

By construction, every star-simplex is contained in a full star-simplex, and every top-simplex is contained in a full top-simplex.

\subsection{The height function}

For the proof of simple connectedness in Section~\ref{sec:connectedness-and-simple-connectedness} we shall use the following height function on partitions:
\[
h(\lambda):=\sum_{i=1}^{\ell} i\,\lambda_i.
\]

Equivalently, if \(\lambda'\) denotes the conjugate partition, then
\[
h(\lambda)=\sum_{j\ge 1}\frac{\lambda_j'(\lambda_j'+1)}{2}.
\]

\begin{lemma}\label{lem:height-change}
If
\[
\mu=\lambda(c\to a),
\]
then
\[
h(\mu)-h(\lambda)=r(a)-r(c).
\]
In particular, if
\[
r(a)<r(c),
\]
then
\[
h(\mu)<h(\lambda).
\]
\end{lemma}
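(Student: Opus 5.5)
The plan is to use the conjugate formula
\[
h(\lambda)=\sum_{j\ge 1}\binom{\lambda'_j+1}{2},
\]
since by \Cref{lem:adjacency-conjugate-criterion} a transfer changes only two conjugate coordinates. The row-sum description of \(h\) is less convenient, because it involves reordering. I would first justify the conjugate formula briefly. The cell in row \(i\) and column \(j\) contributes weight \(i\) to \(\sum_i i\lambda_i\), and column \(j\) contains the rows \(1,\dots,\lambda'_j\). Summing the weights column by column therefore gives \(\sum_j \lambda'_j(\lambda'_j+1)/2\).

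Next, set \(k=\operatorname{col}(c)\) and \(m=\operatorname{col}(a)\), and write \(\mu'=\lambda'-e_k+e_m\). I would check that \(k\ne m\). If \(k=m\), then \(\mu'=\lambda'\), so \(\mu=\lambda\), which admissibility excludes. Only the \(k\)-th and \(m\)-th terms of the sum change. The \(k\)-th term drops from \(\binom{\lambda'_k+1}{2}\) to \(\binom{\lambda'_k}{2}\), a change of \(-\lambda'_k\). The \(m\)-th term rises from \(\binom{\lambda'_m+1}{2}\) to \(\binom{\lambda'_m+2}{2}\), a change of \(\lambda'_m+1\). Hence
\[
h(\mu)-h(\lambda)=(\lambda'_m+1)-\lambda'_k .
\]
The corner formulas recorded earlier give \(r(c)=\lambda'_{k}\) and \(r(a)=\lambda'_{m}+1\), so this difference equals \(r(a)-r(c)\). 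The second assertion follows at once.

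The only delicate point is the claim that conjugates change exactly by \(-e_k+e_m\) after reordering rows. The paper records this before \Cref{lem:adjacency-conjugate-criterion}, so I would simply invoke it. For a self-contained check, note that reordering rows does not change column lengths. Removing the corner \(c\) shortens column \(k\) by one, and adding the cell \(a\) lengthens column \(m\) by one. This uses \(r(c)\neq r(a)\), so the two operations do not interfere as corner operations on \(\lambda\), even if the intermediate multiset of rows must be re-sorted.
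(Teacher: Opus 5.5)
Your proposal is correct and follows essentially the same route as the paper: you pass to the conjugate formula $h(\lambda)=\sum_j \lambda'_j(\lambda'_j+1)/2$, note that only the columns $\operatorname{col}(c)$ and $\operatorname{col}(a)$ change, and convert $\lambda'_{\operatorname{col}(a)}+1-\lambda'_{\operatorname{col}(c)}$ into $r(a)-r(c)$ via the corner identities. Your derivation of the conjugate formula by summing column by column, and your check that $\operatorname{col}(c)\neq\operatorname{col}(a)$ follows from admissibility, are both sound and fill in details the paper's computation uses only implicitly.
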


\begin{proof}
Using the conjugate partition, we may write
\[
\mu' = \lambda' - e_{\operatorname{col}(c)} + e_{\operatorname{col}(a)}.
\]
Since
\[
h(\lambda)=\sum_{j\ge 1}\frac{\lambda'_j(\lambda'_j+1)}{2},
\]
only the two columns \(\operatorname{col}(c)\) and \(\operatorname{col}(a)\) contribute to the change in height. Therefore
\begin{align*}
 h(\mu)-h(\lambda)
 &=\left(\frac{(\lambda'_{\operatorname{col}(a)}+1)(\lambda'_{\operatorname{col}(a)}+2)}{2}-\frac{\lambda'_{\operatorname{col}(a)}(\lambda'_{\operatorname{col}(a)}+1)}{2}\right)\\
 &\qquad-\left(\frac{\lambda'_{\operatorname{col}(c)}(\lambda'_{\operatorname{col}(c)}+1)}{2}-\frac{(\lambda'_{\operatorname{col}(c)}-1)\lambda'_{\operatorname{col}(c)}}{2}\right)\\
 &=\lambda'_{\operatorname{col}(a)}+1-\lambda'_{\operatorname{col}(c)}\\
 &=r(a)-r(c).
\end{align*}
Here the last equality uses the identities
\[
r(a)=\lambda'_{\operatorname{col}(a)}+1,\qquad r(c)=\lambda'_{\operatorname{col}(c)}.
\]
\end{proof}

\begin{lemma}\label{lem:adjacent-different-heights}
If \(\lambda\) and \(\mu\) are distinct adjacent vertices of \(G_n\), then
\[
h(\lambda)\neq h(\mu).
\]
\end{lemma}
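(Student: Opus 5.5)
By \Cref{lem:adjacency-conjugate-criterion}, we may write \(\mu=\lambda(c\to a)\) for some removable corner \(c\) and addable corner \(a\) of \(\lambda\) with the transfer admissible. By \Cref{lem:height-change},
\[
h(\mu)-h(\lambda)=r(a)-r(c).
\]
So it suffices to show that \(r(a)\neq r(c)\) for every admissible transfer.

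The definition of an admissible transfer already requires \(r(c)\neq r(a)\), since \(c\) and \(a\) are taken in different rows. Hence \(h(\mu)-h(\lambda)\neq 0\).

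The one point that needs care is that the row numbers appearing in \Cref{lem:height-change} are the same as those constrained in the definition. They are: both are \(r(c)=\lambda'_{\operatorname{col}(c)}\) and \(r(a)=\lambda'_{\operatorname{col}(a)}+1\), computed in \(\lambda\) before any reordering. The reordering step does not affect this, because \(h\) is computed from the conjugate partition, and that is exactly how \(\mu'=\lambda'-e_{\operatorname{col}(c)}+e_{\operatorname{col}(a)}\) is described.

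If one preferred not to rely on the row condition in the definition, one could argue directly. Suppose \(r(a)=r(c)\). Then \(c\) is the last cell of its row and \(a\) is the position immediately to its right. Removing \(c\) and then adding at \(a\) either returns \(\lambda\) itself, which is excluded by admissibility, or \(a\) is not addable once \(c\) is removed. Either way this case cannot occur. I expect no real obstacle here; the statement is an immediate corollary of \Cref{lem:height-change}.
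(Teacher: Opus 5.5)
Your proof is correct and follows the paper's argument exactly: write \(\mu=\lambda(c\to a)\), observe that the definition of a transfer already forces \(r(a)\neq r(c)\), and conclude from Lemma~\ref{lem:height-change}. Your additional remarks, on reordering and on the direct treatment of the case \(r(a)=r(c)\), are sound but not needed.
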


\begin{proof}
Write \(\mu=\lambda(c\to a)\). Since the transfer is nontrivial, one has \(r(a)\neq r(c)\). Now apply Lemma~\ref{lem:height-change}.
\end{proof}

\subsection{Summary}

We have introduced the partition graph \(G_n\), its clique complex \(K_n\), the canonical star- and top-simplices, and the height function used later in the proof of simple connectedness. In the next section we prove that every clique in \(G_n\) is contained in one of the two canonical simplex families.

\section{Classification of cliques in the partition graph}
\label{sec:classification-of-cliques}

In this section we prove that every clique in the partition graph \(G_n\) is contained in one of the two canonical simplex families introduced in Section~2. The key step is a complete classification of triangles in \(G_n\).

\subsection{Triangles in the partition graph}

We begin with the two basic triangle constructions already implicit in the definitions of star- and top-simplices.

\begin{lemma}\label{lem:star-triangle-criterion}
Let
\[
\mu_1=\lambda(c\to a_1),\qquad
\mu_2=\lambda(c\to a_2).
\]
Then the vertices \(\lambda,\mu_1,\mu_2\) span a triangle in \(G_n\).
\end{lemma}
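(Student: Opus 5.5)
The plan is to work in conjugate coordinates via \Cref{lem:adjacency-conjugate-criterion}. The statement implicitly assumes that both transfers are admissible and that \(a_1\neq a_2\). Under that reading, the edges \(\lambda\mu_1\) and \(\lambda\mu_2\) exist by definition of admissible transfers, so only the edge \(\mu_1\mu_2\) needs proof. We must also check that \(\mu_1\neq\mu_2\), so that the three vertices are distinct.

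Write \(k=\operatorname{col}(c)\), \(\ell_1=\operatorname{col}(a_1)\) and \(\ell_2=\operatorname{col}(a_2)\). By the displayed formula for transfers,
\[
\mu_1'=\lambda'-e_k+e_{\ell_1},\qquad \mu_2'=\lambda'-e_k+e_{\ell_2},
\]
and hence
\[
\mu_2'=\mu_1'-e_{\ell_1}+e_{\ell_2}.
\]
Since an addable corner is determined by its column, \(a_1\neq a_2\) gives \(\ell_1\neq\ell_2\), and therefore \(\mu_1'\neq\mu_2'\), that is, \(\mu_1\neq\mu_2\).

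It remains to show that \(\mu_2\) is obtained from \(\mu_1\) by an admissible transfer. The natural choice is to take the cell at \(a_1\), newly added in \(\mu_1\), and move it to the position \(a_2\).

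First, the cell occupying \(a_1\) should be a removable corner of \(\mu_1\). In \(\mu_1'\), column \(\ell_1\) has length \(\lambda'_{\ell_1}+1\). Since \(a_1\) was addable in \(\lambda\), either \(\ell_1=1\) or \(\lambda'_{\ell_1-1}>\lambda'_{\ell_1}\). Removing the cell restores the diagram \(\lambda-c\), which is a partition. So the cell is a removable corner of \(\mu_1\) with column \(\ell_1\).

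Second, the position \(a_2\) should be addable in \(\mu_1\): adding a cell in column \(\ell_2\) to \(\mu_1\) gives exactly \(\mu_2\), which is a partition.

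Finally, the rows must differ. This follows because \(\mu_2\neq\mu_1\), so the transfer is nontrivial. Then the characterisation "conjugates differ by \(-e_i+e_j\)", recorded just before \Cref{lem:adjacency-conjugate-criterion}, yields adjacency.

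The main obstacle is the bookkeeping with corners when the columns interact with \(k\). For example, \(\ell_2\) might equal \(k\), or \(\ell_1=k+1\). Then whether a given column position is a "corner" of \(\mu_1\) is not literally inherited from \(\lambda\).

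To sidestep this, I would argue purely with the adjacency criterion "\(\mu'\) is obtained from \(\nu'\) by \(-e_i+e_j\), with both partitions", rather than with explicit corners. The reason is that if \(\nu\) and \(\nu'-e_i+e_j\) are both partitions, then column \(i\) ends in a removable corner of \(\nu\), and column \(j\) has an addable position in \(\nu - (\text{that cell})\). One should check that this agrees with the corner-based definition, in which corners are taken in \(\nu\) itself. Where \(j=i+1\) issues arise, the equivalence stated in the paper ("adjacent iff conjugates differ by decreasing one part and increasing another") is what I would invoke.

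Since \(\mu_1'\) and \(\mu_2'\) are conjugates of partitions differing by \(-e_{\ell_1}+e_{\ell_2}\) with \(\ell_1\neq\ell_2\), adjacency follows.
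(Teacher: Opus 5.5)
Your proposal is correct and is essentially the paper's proof: both compute \(\mu_2'=\mu_1'-e_{\ell_1}+e_{\ell_2}\) in conjugate coordinates and conclude adjacency from the criterion stated just before \Cref{lem:adjacency-conjugate-criterion}. Your check that \(\ell_1\neq\ell_2\) forces \(\mu_1\neq\mu_2\) is a useful detail the paper leaves implicit. The intermediate corner bookkeeping is not needed, and its second step is misphrased: adding a cell in column \(\ell_2\) to \(\mu_1\) gives a partition of \(n+1\), not \(\mu_2\). What you actually need is that column \(\ell_2\) is addable in \(\mu_1\), and this follows from \(\mu_2'=\mu_1'-e_{\ell_1}+e_{\ell_2}\) being weakly decreasing.
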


\begin{proof}
Let \(k=\operatorname{col}(c)\), \(\ell_1=\operatorname{col}(a_1)\), and \(\ell_2=\operatorname{col}(a_2)\). On the level of conjugate partitions,
\[
\mu_1' = \lambda' - e_k + e_{\ell_1},\qquad \mu_2' = \lambda' - e_k + e_{\ell_2}.
\]
Hence
\[
\mu_2' = \mu_1' - e_{\ell_1} + e_{\ell_2}.
\]
Therefore \(\mu_1'\) and \(\mu_2'\) differ by decreasing one part by \(1\) and increasing another part by \(1\). By Lemma~\ref{lem:adjacency-conjugate-criterion}, this means that \(\mu_1\) and \(\mu_2\) are adjacent in \(G_n\). Since both are also adjacent to \(\lambda\), the three vertices span a triangle in \(G_n\).
\end{proof}

\begin{lemma}\label{lem:top-triangle-criterion}
Let
\[
\mu_1=\lambda(c_1\to a),\qquad
\mu_2=\lambda(c_2\to a).
\]
Then the vertices \(\lambda,\mu_1,\mu_2\) span a triangle in \(G_n\).
\end{lemma}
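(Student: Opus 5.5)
The plan mirrors the proof of \Cref{lem:star-triangle-criterion}, with the roles of removable and addable corners interchanged. Set \(\ell=\operatorname{col}(a)\), \(k_1=\operatorname{col}(c_1)\) and \(k_2=\operatorname{col}(c_2)\). By the conjugate description of admissible transfers (\Cref{lem:adjacency-conjugate-criterion}) we have
\[
\mu_1'=\lambda'-e_{k_1}+e_{\ell},\qquad \mu_2'=\lambda'-e_{k_2}+e_{\ell},
\]
and subtracting gives
\[
\mu_2'=\mu_1'-e_{k_2}+e_{k_1}.
\]
So \(\mu_1'\) and \(\mu_2'\) differ by lowering one part by \(1\) and raising another by \(1\). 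The edges \(\lambda\mu_1\) and \(\lambda\mu_2\) hold by hypothesis, so only the edge \(\mu_1\mu_2\) needs proof.

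I implicitly assume \(c_1\neq c_2\). Since a removable corner is determined by its column, this gives \(k_1\neq k_2\), hence \(\mu_1\neq\mu_2\) and we get a genuine triangle rather than a degenerate one.

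To conclude adjacency I will use the remark preceding \Cref{lem:adjacency-conjugate-criterion}: two partitions are adjacent iff their conjugates differ by \(-e_i+e_j\) with \(i\neq j\). If one insists on the stated form of that lemma, I must exhibit a removable corner of \(\mu_1\) in column \(k_2\) and an addable corner of \(\mu_1\) in column \(k_1\), forming an admissible transfer. This is the only real obstacle.

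For the corners, note that \(\mu_2'=\mu_1'-e_{k_2}+e_{k_1}\) is itself a partition conjugate. So column \(k_2\) of \(\mu_1\) can lose a cell and column \(k_1\) can gain one while every intermediate column sequence stays weakly decreasing. The cells involved are then the bottom cell of column \(k_2\) and the position just below column \(k_1\); these are a removable and an addable corner of \(\mu_1\) respectively.

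The step I expect to need care is that these two positions lie in different rows, so the transfer is a legitimate elementary move. If they were in the same row, moving the cell would be trivial. That would force \(\mu_2=\mu_1\), contradicting \(k_1\neq k_2\), so the rows differ. The transfer is therefore admissible, \(\mu_1\) and \(\mu_2\) are adjacent, and \(\lambda,\mu_1,\mu_2\) span a triangle.
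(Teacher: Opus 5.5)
Your main line is the paper's proof: the conjugate computation \(\mu_2'=\mu_1'-e_{k_2}+e_{k_1}\), followed by an appeal to the conjugate adjacency criterion. The paper's proof also tacitly needs \(c_1\neq c_2\). It reads \Cref{lem:adjacency-conjugate-criterion} in the ``iff the conjugates differ by \(-e_u+e_v\)'' form and never exhibits the corners, so your extra verification goes beyond what the paper does.

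In that extra verification, the different-rows step is circular as written. Suppose the bottom cell of column \(k_2\) and the addable position in column \(k_1\) of \(\mu_1\) lay in the same row. Then the ``move'' between them would leave the conjugate unchanged, since that row goes from length \(k_2\) to \(k_2-1\) and back. So you cannot also claim it produces \(\mu_2\): the identity \(\mu_2'=\mu_1'-e_{k_2}+e_{k_1}\) describes a transfer only when the two rows differ, which is exactly what you are trying to prove.

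The correct argument is direct. A removable corner and an addable corner in the same row force \(k_1=k_2+1\) and \((\mu_1')_{k_2}=(\mu_1')_{k_1}+1\). Then \((\mu_2')_{k_2}=(\mu_1')_{k_1}<(\mu_1')_{k_1}+1=(\mu_2')_{k_1}\), which contradicts \(\mu_2'\) being a partition. With this replacement your proof is complete.
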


\begin{proof}
Let \(k_1=\operatorname{col}(c_1)\), \(k_2=\operatorname{col}(c_2)\), and \(\ell=\operatorname{col}(a)\). On the level of conjugate partitions,
\[
\mu_1' = \lambda' - e_{k_1} + e_{\ell},\qquad \mu_2' = \lambda' - e_{k_2} + e_{\ell}.
\]
Hence
\[
\mu_2' = \mu_1' - e_{k_2} + e_{k_1}.
\]
Therefore \(\mu_1'\) and \(\mu_2'\) differ by decreasing one part by \(1\) and increasing another part by \(1\). By Lemma~\ref{lem:adjacency-conjugate-criterion}, this means that \(\mu_1\) and \(\mu_2\) are adjacent in \(G_n\). Since both are also adjacent to \(\lambda\), the three vertices span a triangle in \(G_n\).
\end{proof}

The converse statement is the main local combinatorial fact.

\begin{lemma}\label{lem:forbidden-mixed-triangle}
Let
\[
\mu_1=\lambda(c_1\to a_1),\qquad
\mu_2=\lambda(c_2\to a_2),
\]
and assume that
\[
c_1\neq c_2,\qquad a_1\neq a_2.
\]
Then \(\mu_1\) and \(\mu_2\) are not adjacent in \(G_n\).
\end{lemma}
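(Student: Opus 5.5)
The plan is to work entirely in conjugate coordinates, as in the proofs of \Cref{lem:star-triangle-criterion,lem:top-triangle-criterion}, and to show that \(\mu_1'\) and \(\mu_2'\) are too far apart to be related by a single transfer. Write \(k_i=\operatorname{col}(c_i)\) and \(\ell_i=\operatorname{col}(a_i)\). By the conjugate description of admissible transfers,
\[
\mu_1'=\lambda'-e_{k_1}+e_{\ell_1},\qquad \mu_2'=\lambda'-e_{k_2}+e_{\ell_2},
\]
and hence
\[
v:=\mu_2'-\mu_1'=e_{k_1}+e_{\ell_2}-e_{\ell_1}-e_{k_2}.
\]
By \Cref{lem:adjacency-conjugate-criterion}, adjacency of \(\mu_1\) and \(\mu_2\) would force \(v=-e_p+e_q\) for some \(p\neq q\). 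In particular the coordinate sum of \(\lvert v\rvert\) would be exactly \(2\).

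The key step is to show that \(v\) has no cancellation, so that this coordinate sum is \(4\). Cancellation can only occur if a positive index equals a negative index. There are four such coincidences to rule out.

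\begin{enumerate}
\item \(k_1=k_2\) is impossible, because a removable corner is determined by its column and \(c_1\neq c_2\).
\item \(\ell_1=\ell_2\) is impossible for the same reason applied to addable corners, since \(a_1\neq a_2\).
\item \(k_1=\ell_1\) and \(k_2=\ell_2\) are impossible by admissibility. If \(\operatorname{col}(c)=\operatorname{col}(a)\), then \(\lambda'-e_k+e_k=\lambda'\), so the transfer would return \(\lambda\) itself. This contradicts the requirement that an admissible transfer produce a partition different from \(\lambda\).
\end{enumerate}

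With all four coincidences excluded, the vector \(v\) has coefficient sum \(1+1\) on its positive part and \(-1-1\) on its negative part, and these parts have disjoint supports. Possible coincidences such as \(k_1=\ell_2\) or \(\ell_1=k_2\) only merge entries of the same sign, giving coefficients \(\pm2\). Such merging does not reduce the total. In every case \(\sum_j\lvert v_j\rvert=4\neq2\), so \(\mu_1\) and \(\mu_2\) are not adjacent. The same computation also shows \(v\neq0\), so \(\mu_1\neq\mu_2\).

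The only delicate point, and the step I would check most carefully, is the claim that an admissible transfer never has \(\operatorname{col}(c)=\operatorname{col}(a)\). I intend to argue it exactly as above, via the \(\mu\neq\lambda\) clause in the definition of admissibility, rather than through the geometry of the diagram. Everything else reduces to the bookkeeping of signed unit vectors together with the facts, recorded in Section~\ref{sec:partitions-moves-canonical-simplices}, that corners are determined by their column indices and that conjugation is a bijection.
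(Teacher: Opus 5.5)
Your proof is correct and takes the same route as the paper: it computes \(\mu_2'-\mu_1'=e_{k_1}-e_{\ell_1}-e_{k_2}+e_{\ell_2}\) in conjugate coordinates and shows this cannot have the form \(-e_u+e_v\), with your \(\ell^1\)-norm count standing in for the paper's support-size case analysis. Your explicit exclusion of \(k_i=\ell_i\) via the \(\mu\neq\lambda\) clause of admissibility is a real improvement: the paper discusses only the same-sign coincidences \(k_1=\ell_2\) and \(k_2=\ell_1\), yet if \(k_1=\ell_1\) were allowed the difference would collapse to \(-e_{k_2}+e_{\ell_2}\) and the lemma would fail.
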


\begin{proof}
Let
\[
k_1=\operatorname{col}(c_1),\qquad k_2=\operatorname{col}(c_2),\qquad \ell_1=\operatorname{col}(a_1),\qquad \ell_2=\operatorname{col}(a_2).
\]
Since distinct removable corners have distinct column indices, and distinct addable corners also have distinct column indices, the assumptions \(c_1\neq c_2\) and \(a_1\neq a_2\) imply
\[
k_1\neq k_2,
\qquad
\ell_1\neq \ell_2.
\]
On the level of conjugate partitions,
\[
\mu_1' = \lambda' - e_{k_1} + e_{\ell_1},\qquad \mu_2' = \lambda' - e_{k_2} + e_{\ell_2}.
\]
Hence
\[
\mu_2'-\mu_1' = e_{k_1}-e_{\ell_1}-e_{k_2}+e_{\ell_2}.
\]
If \(\mu_1\) and \(\mu_2\) were adjacent in \(G_n\), then by Lemma~\ref{lem:adjacency-conjugate-criterion} their conjugates would differ by a vector of the form \(-e_u+e_v\) with \(u\neq v\), that is, by exactly one decrement and one increment.

This is impossible for the vector above. If the four indices \(k_1,k_2,\ell_1,\ell_2\) are pairwise distinct, then the support has size \(4\). If there is a cross-equality such as \(k_1=\ell_2\) or \(k_2=\ell_1\), then one obtains a coefficient of magnitude \(2\) or a vector with support of size \(3\). In no case can \(\mu_2'-\mu_1'\) be of the form \(-e_u+e_v\). Therefore \(\mu_1\) and \(\mu_2\) are not adjacent in \(G_n\).
\end{proof}

Combining the three lemmas above, we obtain a complete classification of triangles.

\begin{theorem}\label{thm:triangle-classification}
Let
\[
\mu_1=\lambda(c_1\to a_1),\qquad
\mu_2=\lambda(c_2\to a_2).
\]
Then the vertices \(\lambda,\mu_1,\mu_2\) form a triangle in \(G_n\) if and only if
\[
c_1=c_2
\qquad\text{or}\qquad
a_1=a_2.
\]
\end{theorem}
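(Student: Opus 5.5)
The theorem follows by combining the three preceding lemmas, splitting into the two directions. Throughout we take the hypotheses to include that both transfers are admissible, so that \(\mu_1\) and \(\mu_2\) are neighbours of \(\lambda\). We also take \(\mu_1\neq\mu_2\), since a triangle needs three distinct vertices.

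For the ``if'' direction, suppose \(c_1=c_2=:c\). Then \(\mu_1=\lambda(c\to a_1)\) and \(\mu_2=\lambda(c\to a_2)\), and \Cref{lem:star-triangle-criterion} shows directly that \(\lambda,\mu_1,\mu_2\) span a triangle. If instead \(a_1=a_2=:a\), then \Cref{lem:top-triangle-criterion} applies in the same way. In both cases \(\mu_1\neq\mu_2\) forces the other pair of corners to be distinct, because the conjugate vectors \(\lambda'-e_{k}+e_{\ell}\) then differ. Hence \(-e_{\ell_1}+e_{\ell_2}\) (respectively \(-e_{k_2}+e_{k_1}\)) is a genuine decrement/increment pair, and the lemma's conclusion of adjacency is legitimate.

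For the ``only if'' direction, assume the three vertices form a triangle, so in particular \(\mu_1\) and \(\mu_2\) are adjacent. Suppose for contradiction that \(c_1\neq c_2\) and \(a_1\neq a_2\). Then \Cref{lem:forbidden-mixed-triangle} says that \(\mu_1\) and \(\mu_2\) are not adjacent, which is the desired contradiction. Hence \(c_1=c_2\) or \(a_1=a_2\).

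The only delicate point is the support analysis inside \Cref{lem:forbidden-mixed-triangle}, which we may assume. Should one wish to double-check the cross-equality cases there, it is worth noting the following. Both \(k_1=\ell_2\) and \(k_2=\ell_1\) can hold simultaneously; then the difference vector is \(2e_{k_1}-2e_{k_2}\), which still has coefficients of magnitude \(2\) and so is not of the form \(-e_u+e_v\). Note also that \(k_i\neq\ell_i\) holds automatically: \(\mu_i\neq\lambda\) means the conjugate actually changes, and this gives the same conclusion. With that lemma in hand, the theorem is a direct assembly and presents no real obstacle.
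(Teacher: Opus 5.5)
Your proof is correct and follows the paper's argument exactly: the ``if'' direction comes from the star- and top-triangle lemmas, and the ``only if'' direction comes from the forbidden-mixed-triangle lemma. Your added remarks are sound refinements of points the paper leaves implicit. These are the requirement \(\mu_1\neq\mu_2\), which forces the other pair of corners to be distinct, and the simultaneous cross-equality \(k_1=\ell_2\), \(k_2=\ell_1\), which gives \(2e_{k_1}-2e_{k_2}\).
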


\begin{proof}
If \(c_1=c_2\), the claim follows from Lemma~\ref{lem:star-triangle-criterion}. If \(a_1=a_2\), it follows from Lemma~\ref{lem:top-triangle-criterion}. Conversely, if \(c_1\neq c_2\) and \(a_1\neq a_2\), then Lemma~\ref{lem:forbidden-mixed-triangle} shows that \(\mu_1\) and \(\mu_2\) are not adjacent, so no triangle exists.
\end{proof}

\subsection{Uniformity of cliques through a fixed vertex}

The triangle classification immediately forces a strong uniformity property for cliques containing a fixed vertex.

\begin{lemma}\label{lem:uniform-type}
Let
\[
\mu_i=\lambda(c_i\to a_i)\qquad (i=1,\dots,m),
\]
and suppose that
\[
\{\lambda,\mu_1,\dots,\mu_m\}
\]
is a clique in \(G_n\). Then either
\[
c_1=\cdots=c_m,
\]
or
\[
a_1=\cdots=a_m.
\]
\end{lemma}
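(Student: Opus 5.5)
The plan is to reduce the lemma to the pairwise statement in \Cref{thm:triangle-classification} and then add a short combinatorial argument that rules out \emph{mixed} patterns. Since \(\{\lambda,\mu_1,\dots,\mu_m\}\) is a clique, every triple \(\{\lambda,\mu_i,\mu_j\}\) with \(i\neq j\) is a triangle in \(G_n\). By \Cref{thm:triangle-classification}, for every pair \(i\neq j\) we therefore have
\[
c_i=c_j\qquad\text{or}\qquad a_i=a_j .
\]
If \(m\le 2\) this is already the claim, so assume \(m\ge 3\).

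I would first note that the pairs \((c_i,a_i)\) are pairwise distinct. On conjugates, \(\mu_i'=\lambda'-e_{\operatorname{col}(c_i)}+e_{\operatorname{col}(a_i)}\), and corners are determined by their columns. So \(\mu_i\neq\mu_j\) forces \((c_i,a_i)\neq(c_j,a_j)\). Consequently, for \(i\neq j\) exactly one of the equalities \(c_i=c_j\), \(a_i=a_j\) holds.

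Now suppose, for contradiction, that neither conclusion holds. Then not all \(c_i\) coincide, so some pair satisfies \(c_i\neq c_j\), and hence \(a_i=a_j\). Likewise not all \(a_i\) coincide, so some pair satisfies \(a_p\neq a_q\), and hence \(c_p=c_q\).

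The main step is to produce an index that conflicts with both constraints. Fix \(i\). If the \(c\)'s are not all equal to \(c_1\), choose \(j\) with \(c_j\neq c_1\); then \(a_j=a_1\). Since the \(a\)'s are not all equal, choose \(k\) with \(a_k\neq a_1\); then \(c_k=c_1\). Now compare \(j\) and \(k\):
\[
a_k\neq a_1=a_j,\qquad c_k=c_1\neq c_j .
\]
So \(c_j\neq c_k\) and \(a_j\neq a_k\), which contradicts \Cref{thm:triangle-classification} applied to \(\lambda,\mu_j,\mu_k\). The indices \(j,k\) are distinct from \(1\) and from each other, since \(c_j\neq c_1\), \(a_k\neq a_1\), and the relations above differ.

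There remains the degenerate case where all \(c_i\) equal \(c_1\); but that is the first alternative of the lemma. Symmetrically, if all \(a_i\) equal \(a_1\), the second alternative holds. I expect no real obstacle; the only care needed is checking that the chosen indices are distinct, and this follows from the inequalities just recorded.
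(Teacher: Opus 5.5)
Your proof is correct and follows essentially the same route as the paper: you reduce to the pairwise dichotomy $c_i=c_j$ or $a_i=a_j$ from \Cref{thm:triangle-classification}, then exclude a mixed pattern by comparing indices against a fixed pivot. The only difference is presentation (the paper argues directly from a pair with $c_i\neq c_j$, while you argue by contradiction with pivot index $1$), and your preliminary observation that the pairs $(c_i,a_i)$ are distinct is correct but not needed.
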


\begin{proof}
For every pair \(i\neq j\), the vertices \(\lambda,\mu_i,\mu_j\) form a triangle. By Theorem~\ref{thm:triangle-classification}, for each such pair one has
\[
c_i=c_j
\qquad\text{or}\qquad
a_i=a_j.
\]

Suppose first that there exist indices \(i,j\) with \(c_i\neq c_j\). Then necessarily \(a_i=a_j\). Let \(k\) be arbitrary. Applying Theorem~\ref{thm:triangle-classification} to the pairs \((i,k)\) and \((j,k)\), we obtain
\[
c_i=c_k \text{ or } a_i=a_k,
\qquad
c_j=c_k \text{ or } a_j=a_k.
\]
Since \(c_i\neq c_j\) and \(a_i=a_j\), the only possibility is \(a_k=a_i\). As \(k\) was arbitrary, all \(a_k\) coincide.

The dual argument shows that if there exist \(i,j\) with \(a_i\neq a_j\), then all \(c_i\) coincide.
\end{proof}

We may now classify all cliques in \(G_n\).

\begin{theorem}\label{thm:clique-classification}
Every clique in \(G_n\) is contained in a star-simplex or in a top-simplex.
\end{theorem}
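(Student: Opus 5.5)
We take an arbitrary clique \(\sigma\) in \(G_n\), fix any vertex \(\lambda\in\sigma\), and base the canonical simplex at \(\lambda\) itself. Small cliques need care first. If \(\sigma=\{\lambda\}\), it is a star- or top-simplex with \(A=\emptyset\) or \(C=\emptyset\); for \(\sigma=\emptyset\) the statement is vacuous, or we treat it as contained in any one-vertex simplex. If \(\sigma=\{\lambda,\mu\}\) is an edge, then by Lemma~\ref{lem:adjacency-conjugate-criterion} we may write \(\mu=\lambda(c\to a)\) for an admissible transfer. Then \(\sigma\) is a star-simplex based at \((\lambda,c)\) with \(A=\{a\}\), and also a top-simplex based at \((\lambda,a)\).

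For a general clique, list the remaining vertices as \(\mu_1,\dots,\mu_m\). Each \(\mu_i\) is adjacent to \(\lambda\), so Lemma~\ref{lem:adjacency-conjugate-criterion} gives a removable corner \(c_i\) and an addable corner \(a_i\) of \(\lambda\) with \(\mu_i=\lambda(c_i\to a_i)\) admissible. The set \(\{\lambda,\mu_1,\dots,\mu_m\}\) is a clique, so Lemma~\ref{lem:uniform-type} applies: either all \(c_i\) equal a common \(c\), or all \(a_i\) equal a common \(a\).

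In the first case, put \(A=\{a_1,\dots,a_m\}\). Each transfer \(\lambda(c\to a_i)\) is admissible by choice, so \(\sigma=\{\lambda\}\cup\{\lambda(c\to a):a\in A\}\) is exactly a star-simplex based at \((\lambda,c)\) in the sense of Definition~\ref{def:star-simplex}. In the second case, put \(C=\{c_1,\dots,c_m\}\), and \(\sigma\) is a top-simplex based at \((\lambda,a)\) by Definition~\ref{def:top-simplex}. Thus \(\sigma\) is not merely contained in a canonical simplex but equals one. Passing to a full star- or full top-simplex (Definition~\ref{def:full-star-top}) shows it also lies in a maximal canonical simplex.

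The only point needing care is that the decomposition \(\mu_i=\lambda(c_i\to a_i)\) be well defined, so that Lemma~\ref{lem:uniform-type} applies to a definite choice of corners. Uniqueness of the pair is not actually needed: any choice works, since that lemma holds for whatever representation is fixed. In fact the pair is unique. By Lemma~\ref{lem:adjacency-conjugate-criterion}, \(\mu_i'-\lambda'=-e_k+e_\ell\) determines the columns \(k\ne\ell\), and a corner is determined by its column index. This is the step I would double-check, but it does not obstruct the argument. The real content is already in Theorem~\ref{thm:triangle-classification} and Lemma~\ref{lem:uniform-type}, so the proof is a short bookkeeping argument.
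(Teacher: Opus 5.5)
Your proposal is correct and follows the paper's own proof: fix a vertex \(\lambda\) of the clique, write every other vertex as \(\lambda(c_i\to a_i)\), and apply Lemma~\ref{lem:uniform-type} to place the clique in (indeed, make it equal to) a star-simplex based at \((\lambda,c)\) or a top-simplex based at \((\lambda,a)\). Your extra remarks on cliques with at most two vertices and on uniqueness of the corner pair are accurate but not needed, since Lemma~\ref{lem:uniform-type} holds trivially for \(m\le 1\) and for any fixed choice of the pairs \((c_i,a_i)\).
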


\begin{proof}
Let \(C\) be a clique in \(G_n\), and choose a vertex \(\lambda\in C\). Every other vertex of \(C\) is a neighbor of \(\lambda\), hence is of the form
\[
\lambda(c_i\to a_i).
\]
By Lemma~\ref{lem:uniform-type}, either all \(c_i\) are equal or all \(a_i\) are equal. In the first case \(C\) is contained in a star-simplex based at \((\lambda,c)\); in the second case it is contained in a top-simplex based at \((\lambda,a)\).
\end{proof}

\subsection{Full star- and top-simplices}

We now identify the maximal simplices inside the star- and top-families.

\begin{proposition}\label{prop:maximal-in-star-family}
A star-simplex is maximal among star-simplices if and only if it is a full star-simplex.
\end{proposition}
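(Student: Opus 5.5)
The plan is to compare star-simplices through their vertex sets and to use the fact that the assignment \(a\mapsto\lambda(c\to a)\) is injective. Fix \(\lambda\) and a removable corner \(c\). For admissible \(a\), the conjugate of \(\lambda(c\to a)\) is \(\lambda'-e_{\operatorname{col}(c)}+e_{\operatorname{col}(a)}\). An addable corner is determined by its column, so distinct \(a\) give distinct conjugates, hence distinct partitions. None of them equals \(\lambda\), since admissibility requires the transfer to change \(\lambda\). Consequently, for \(A\subseteq A_{\max}(\lambda,c)\), the star-simplex \(\{\lambda\}\cup\{\lambda(c\to a):a\in A\}\) has exactly \(|A|+1\) vertices. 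Also, \(A\subsetneq A'\) gives a strictly larger simplex.

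For the direction "maximal implies full", take a star-simplex \(\sigma\) based at \((\lambda,c)\) with corner set \(A\). By Definition~\ref{def:star-simplex}, \(A\subseteq A_{\max}(\lambda,c)\), so \(\sigma\subseteq\Sigma^{\mathrm{star}}_{\max}(\lambda,c)\), and the latter is itself a star-simplex. If \(\sigma\) is maximal among star-simplices, the inclusion must be an equality, so \(\sigma\) is full.

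For the converse, suppose \(\Sigma^{\mathrm{star}}_{\max}(\lambda,c)\subseteq\tau\), where \(\tau\) is a star-simplex based at some other pair \((\mu,d)\). I must show \(\tau=\Sigma^{\mathrm{star}}_{\max}(\lambda,c)\). This is the main obstacle, because the base of a star-simplex is not intrinsic: a star-simplex based at \((\lambda,c)\) may coincide with, or sit inside, one based elsewhere. My first attempt is a counting argument. Replacing \(\tau\) by the full star-simplex at \((\mu,d)\), it suffices to show that full star-simplices are pairwise incomparable under inclusion. The key facts are these:

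\begin{itemize}
\item the vertex count of the full star-simplex at \((\lambda,c)\) is \(|A_{\max}(\lambda,c)|+1\);
\item in conjugate coordinates, every vertex of that simplex is \(\lambda'-e_k+e_j\) for the fixed index \(k=\operatorname{col}(c)\) and \(j\) ranging over the addable columns admissible for \(c\).
\end{itemize}

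Pairwise differences of the non-base vertices are \(e_{j_2}-e_{j_1}\), so all of them avoid the coordinate \(k\). If \(\lambda\in\tau\) and \(\lambda\neq\mu\), then \(\lambda=\mu(d\to a_0)\) for some \(a_0\). The other vertices of \(\Sigma^{\mathrm{star}}_{\max}(\lambda,c)\) lying in \(\tau\) are then \(\mu(d\to a)\) with \(a\neq a_0\), and \(\mu\) itself may also lie in \(\tau\).

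I would then use Theorem~\ref{thm:triangle-classification} at the vertex \(\lambda\). The vertex \(\mu\) is a neighbour of \(\lambda\) obtained by the reverse transfer, and each \(\mu(d\to a)\) is a neighbour of \(\lambda\) of the form \(\lambda(\tilde c\to\tilde a)\). For these to lie in a common clique with the vertices \(\lambda(c\to a)\), Lemma~\ref{lem:uniform-type} forces one of two cases: either all of them use the removable corner \(c\), or all of them use a single common addable corner.

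In the first case, every vertex of \(\tau\) other than \(\lambda\) has the form \(\lambda(c\to a)\), so \(\tau\subseteq\Sigma^{\mathrm{star}}_{\max}(\lambda,c)\) and equality holds. In the second case, the vertices \(\lambda(c\to a)\) share an addable corner, so \(|A_{\max}(\lambda,c)|\le1\) whenever that set is nonempty with a top-type relation. In that degenerate situation \(\Sigma^{\mathrm{star}}_{\max}(\lambda,c)\) is an edge or a point. I would handle it by direct inspection, checking that a strictly larger star-simplex containing it would have to contain two vertices \(\lambda(c\to a_1),\lambda(c\to a_2)\) with \(a_1\neq a_2\), which contradicts \(|A_{\max}(\lambda,c)|\le1\).

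If this case analysis becomes messy, I would fall back on reading the proposition as a statement about star-simplices with a fixed base \((\lambda,c)\). Under that reading only the first two paragraphs are needed.
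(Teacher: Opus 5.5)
Your first two paragraphs are correct. Combined with the fixed-base reading you fall back on, they are exactly the paper's proof: the paper's converse only observes that no further vertex \(\lambda(c\to a)\) can be adjoined while \(c\) is kept fixed. So the paper does not address the base-change issue you raise either. Your injectivity check also supplies the tacit reason why adjoining \(\lambda(c\to a)\) gives a \emph{strictly} larger simplex.

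Your argument for the global reading, however, has a genuine gap in the case you call degenerate. Lemma~\ref{lem:uniform-type} holds for arbitrary cliques, so it cannot exclude that \(\tau\) is top-type relative to \(\lambda\). Cliques of that kind strictly containing a full star-simplex do exist: for \(\lambda=(2,2,1)\) and \(c\) the cell in row \(3\), one has \(\Sigma^{\mathrm{star}}_{\max}(\lambda,c)=\{(2,2,1),(3,2)\}\), which lies in the triangle \(\{(2,2,1),(3,2),(3,1,1)\}\). This case can therefore only be eliminated by using that \(\tau\) is a \emph{star}-simplex. Your ``direct inspection'' claim (that a strictly larger star-simplex must contain two vertices \(\lambda(c\to a_1),\lambda(c\to a_2)\)) is exactly that missing content, asserted without argument. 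In the point subcase it cannot work at all: if \(A_{\max}(\lambda,c)=\varnothing\), then \(\{\lambda\}\) is strictly contained in every edge \(\{\lambda,\lambda(c'\to a')\}\), which is a star-simplex based at \((\lambda,c')\). That subcase must be shown not to occur. Your first case carries the same hidden assumption, since the common removable corner equals \(c\) only if some \(\lambda(c\to a)\) lies in \(\tau\).

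The missing ingredient is that a star-simplex is star-type relative to \emph{each} of its vertices, not only its base. In conjugate coordinates a star-simplex based at \((\mu,d)\) has the form \(\{\rho'+e_j : j\in J\}\) with \(\rho'=\mu'-e_{\operatorname{col}(d)}\) fixed. If \(\lambda'=\rho'+e_{j_0}\), every other vertex has conjugate \(\lambda'-e_{j_0}+e_j\). Since the two columns attached to a neighbour of \(\lambda\) are uniquely determined, every vertex of \(\tau\) other than \(\lambda\) is \(\lambda(c_0\to\cdot)\) for the removable corner \(c_0\) of \(\lambda\) in column \(j_0\). A single vertex \(\lambda(c\to a)\in\tau\) then forces \(c_0=c\), hence \(\tau\subseteq\Sigma^{\mathrm{star}}_{\max}(\lambda,c)\), with no case split. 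This is essentially the computation in the second half of Lemma~\ref{lem:two-star-witnesses-force-apex}.

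Nonemptiness of \(A_{\max}(\lambda,c)\) for \(n\ge 2\) is also short. The partition \(\nu\) obtained by deleting \(c\) is nonempty, so it has an addable corner in row \(1\) and another in the first row below its last part. One of them, say \(x\), differs from \(c\). Since \(c\) is the only addable position of \(\nu\) in row \(r(c)\), \(x\) lies in another row. It is addable in \(\lambda\), and \(\lambda(c\to x)\) is \(\nu\) with \(x\) added, which is not \(\lambda\).

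Equivalently, \(\Sigma^{\mathrm{star}}_{\max}(\lambda,c)\) is the set of all partitions covering \(\nu\) in Young's lattice. The cover sets of distinct \(\nu,\tilde\nu\vdash n-1\) share at most the single partition \(\nu\cup\tilde\nu\), while each has at least two elements. This gives pairwise incomparability of full star-simplices at once.
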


\begin{proof}
Let
\[
\Sigma=\{\lambda\}\cup\{\lambda(c\to a):a\in A\}
\]
be a star-simplex based at \((\lambda,c)\). If \(A\neq A_{\max}(\lambda,c)\), then there exists an admissible addable corner \(a\notin A\), and adjoining the vertex \(\lambda(c\to a)\) yields a strictly larger star-simplex. Thus \(\Sigma\) is not maximal in the star-family.

Conversely, if \(A=A_{\max}(\lambda,c)\), then by definition no further admissible vertex of the form \(\lambda(c\to a)\) can be added while keeping the removable corner \(c\) fixed. Hence \(\Sigma\) is maximal among star-simplices.
\end{proof}

\begin{proposition}\label{prop:maximal-in-top-family}
A top-simplex is maximal among top-simplices if and only if it is a full top-simplex.
\end{proposition}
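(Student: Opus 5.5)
The argument will mirror the proof of \Cref{prop:maximal-in-star-family}, with the roles of removable and addable corners interchanged. Fix a top-simplex
\[
\Sigma=\{\lambda\}\cup\{\lambda(c\to a):c\in C\}
\]
based at \((\lambda,a)\), where \(C\subseteq C_{\max}(\lambda,a)\) by \Cref{def:top-simplex}, since every transfer \(\lambda(c\to a)\) with \(c\in C\) is admissible. I will prove the two implications separately, reading ``maximal among top-simplices'' in the same sense as in \Cref{prop:maximal-in-star-family}: with the base \((\lambda,a)\) fixed, no further vertex of the form \(\lambda(c\to a)\) can be adjoined.

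For the forward direction, suppose \(C\neq C_{\max}(\lambda,a)\). Then there is a removable corner \(c_0\in C_{\max}(\lambda,a)\setminus C\), so \(\lambda(c_0\to a)\) is admissible. Put \(C^+:=C\cup\{c_0\}\). Then \(\{\lambda\}\cup\{\lambda(c\to a):c\in C^+\}\) is again a top-simplex based at \((\lambda,a)\); it is a genuine clique by \Cref{lem:top-triangle-criterion}, applied to each pair of its non-base vertices, together with adjacency to \(\lambda\). It remains to check that this simplex is strictly larger than \(\Sigma\), i.e.\ that the new vertex \(\lambda(c_0\to a)\) is not already among the vertices of \(\Sigma\). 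On conjugates,
\[
\lambda(c\to a)'=\lambda'-e_{\operatorname{col}(c)}+e_{\operatorname{col}(a)},
\]
and distinct removable corners have distinct column indices. Hence the map \(c\mapsto\lambda(c\to a)\) is injective. The new vertex also differs from \(\lambda\), because the transfer is admissible. Thus \(\Sigma\) is not maximal.

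For the converse, suppose \(C=C_{\max}(\lambda,a)\). Every vertex that could be adjoined while keeping the base \((\lambda,a)\) has the form \(\lambda(c\to a)\) with the transfer admissible, so \(c\in C_{\max}(\lambda,a)=C\). Such a vertex is therefore already in \(\Sigma\), and \(\Sigma\) is maximal among top-simplices.

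I do not expect a real obstacle here. The only point needing care is the injectivity check in the forward direction, which guarantees that adjoining \(\lambda(c_0\to a)\) genuinely enlarges the simplex, and the conjugate-coordinate formula settles it immediately.
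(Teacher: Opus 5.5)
Your proof is correct and follows the paper's argument: the paper proves this proposition by declaring it dual to Proposition~\ref{prop:maximal-in-star-family}, and that proof runs exactly your two steps, dualized. If $C\neq C_{\max}(\lambda,a)$ you adjoin a missing admissible corner, and otherwise nothing with base $(\lambda,a)$ can be added, with the base fixed as in your reading. Your injectivity check via $\lambda(c\to a)'=\lambda'-e_{\operatorname{col}(c)}+e_{\operatorname{col}(a)}$ justifies that the enlargement is strict, a point the paper leaves implicit.
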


\begin{proof}
This is dual to Proposition~\ref{prop:maximal-in-star-family}.
\end{proof}

\subsection{Genuine maximality in \texorpdfstring{\(K_n\)}{K\_n}}

We next determine which full star- and top-simplices are maximal simplices of the whole clique complex \(K_n\).

\begin{proposition}\label{prop:large-full-star-maximal}
Let
\[
\Sigma=\Sigma^{\mathrm{star}}_{\max}(\lambda,c),
\]
and assume that
\[
|A_{\max}(\lambda,c)|\ge 2.
\]
Then \(\Sigma\) is a maximal simplex of \(K_n\).
\end{proposition}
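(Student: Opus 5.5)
The plan is to argue by contradiction, using the uniformity of cliques through a fixed vertex (Lemma~\ref{lem:uniform-type}) with \(\lambda\) as the base vertex. Suppose \(\Sigma=\Sigma^{\mathrm{star}}_{\max}(\lambda,c)\) is not maximal in \(K_n\). Then there is a vertex \(\nu\notin\Sigma\) such that \(\Sigma\cup\{\nu\}\) is a clique in \(G_n\). I want to show that \(\nu\) is forced to be of the form \(\lambda(c\to a'')\) with \(a''\in A_{\max}(\lambda,c)\). That would put \(\nu\) in \(\Sigma\), which is the contradiction.

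First, since \(\nu\) is adjacent to \(\lambda\) and \(\nu\neq\lambda\), the construction of \(G_n\) gives \(\nu=\lambda(c''\to a'')\) for some removable corner \(c''\) and addable corner \(a''\) of \(\lambda\), with the transfer admissible. By hypothesis \(|A_{\max}(\lambda,c)|\ge 2\), so I can pick distinct \(a_1,a_2\in A_{\max}(\lambda,c)\) and set \(\mu_1=\lambda(c\to a_1)\) and \(\mu_2=\lambda(c\to a_2)\). The set \(\{\lambda,\mu_1,\mu_2,\nu\}\) is a sub-clique of \(\Sigma\cup\{\nu\}\).

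Next I apply Lemma~\ref{lem:uniform-type} to this clique. It says that either the three removable corners \(c,c,c''\) coincide, or the three addable corners \(a_1,a_2,a''\) coincide. The second option is impossible because \(a_1\neq a_2\). Hence \(c''=c\), so \(\nu=\lambda(c\to a'')\) with the transfer admissible. By the definition of \(A_{\max}(\lambda,c)\) this means \(a''\in A_{\max}(\lambda,c)\), and so \(\nu\in\Sigma\), contradicting the choice of \(\nu\).

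The one point that needs care, and which I expect to be the only real obstacle, is well-definedness of the labels: I must check that the pair \((c'',a'')\) attached to \(\nu\) is determined by \(\nu\). Otherwise Lemma~\ref{lem:uniform-type} might be applied to one representation while membership in \(\Sigma\) is judged by another. I will handle this through the conjugate description in Lemma~\ref{lem:adjacency-conjugate-criterion}. The vector \(\nu'-\lambda'=-e_{\operatorname{col}(c'')}+e_{\operatorname{col}(a'')}\) is nonzero and determines both column indices. A removable or addable corner is in turn determined by its column index, so \(c''\) and \(a''\) are determined by \(\nu\). The same remark shows that distinct \(a_1\neq a_2\) give distinct vertices \(\mu_1\neq\mu_2\), so these really are two distinct vertices of \(\Sigma\) and the clique has the claimed form.
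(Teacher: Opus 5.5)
Your proof is correct and follows essentially the paper's argument. The paper applies Theorem~\ref{thm:triangle-classification} separately to the triangles $\lambda,\mu_1,\nu$ and $\lambda,\mu_2,\nu$ to force the removable corner of $\nu$ to equal $c$, and that pairwise reasoning is exactly what Lemma~\ref{lem:uniform-type} packages when you apply it to the $4$-clique $\{\lambda,\mu_1,\mu_2,\nu\}$. Your check, via $\nu'-\lambda'$, that $(c'',a'')$ is determined by $\nu$ is a harmless explicit addition that the paper leaves implicit.
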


\begin{proof}
Let \(\nu\) be a vertex adjacent to every vertex of \(\Sigma\). Since \(\nu\sim \lambda\), we may write
\[
\nu=\lambda(d\to b)
\]
for some removable corner \(d\) and addable corner \(b\).

Choose two distinct corners \(a_1,a_2\in A_{\max}(\lambda,c)\), and let
\[
v_i=\lambda(c\to a_i)\qquad (i=1,2).
\]
Since \(\nu\) is adjacent to both \(v_1\) and \(v_2\), the triples \(\lambda,v_1,\nu\) and \(\lambda,v_2,\nu\) are triangles. By Theorem~\ref{thm:triangle-classification}, we obtain
\[
d=c \text{ or } b=a_1,
\qquad
d=c \text{ or } b=a_2.
\]
As \(a_1\neq a_2\), it follows that \(d=c\). Hence
\[
\nu=\lambda(c\to b).
\]
Since this move is admissible, one has \(b\in A_{\max}(\lambda,c)\), so \(\nu\in\Sigma\). Therefore no new vertex can be added to \(\Sigma\), and \(\Sigma\) is maximal in \(K_n\).
\end{proof}

\begin{proposition}\label{prop:large-full-top-maximal}
Let
\[
\Sigma=\Sigma^{\mathrm{top}}_{\max}(\lambda,a),
\]
and assume that
\[
|C_{\max}(\lambda,a)|\ge 2.
\]
Then \(\Sigma\) is a maximal simplex of \(K_n\).
\end{proposition}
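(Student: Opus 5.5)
The plan is to dualize the proof of \Cref{prop:large-full-star-maximal}, exchanging the roles of removable and addable corners. Let \(\nu\) be a vertex adjacent to every vertex of \(\Sigma=\Sigma^{\mathrm{top}}_{\max}(\lambda,a)\). We must show that \(\nu\in\Sigma\).

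Since \(\nu\sim\lambda\), every earlier description of neighbors applies: by \Cref{lem:adjacency-conjugate-criterion}, and the fact that every neighbor arises from an admissible transfer, we can write
\[
\nu=\lambda(d\to b)
\]
for some removable corner \(d\) and addable corner \(b\) of \(\lambda\).

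Next, use the hypothesis \(|C_{\max}(\lambda,a)|\ge 2\) to choose two distinct removable corners \(c_1,c_2\in C_{\max}(\lambda,a)\). Set
\[
w_i=\lambda(c_i\to a)\qquad (i=1,2).
\]
Then \(\nu\) is adjacent to \(\lambda\) and to each \(w_i\), so \(\lambda,w_i,\nu\) is a triangle for \(i=1,2\). Applying \Cref{thm:triangle-classification} to each of these triangles gives
\[
d=c_1\ \text{or}\ b=a,
\qquad
d=c_2\ \text{or}\ b=a.
\]
Since \(c_1\neq c_2\), the corner \(d\) cannot equal both. Hence \(b=a\), so \(\nu=\lambda(d\to a)\). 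This transfer is admissible because \(\nu\) is a genuine neighbor of \(\lambda\), which gives \(d\in C_{\max}(\lambda,a)\) and therefore \(\nu\in\Sigma\). So no vertex outside \(\Sigma\) is adjacent to all of \(\Sigma\). Since \(\Sigma\) is a clique (by \Cref{lem:top-triangle-criterion} applied pairwise), it is a maximal simplex of \(K_n\).

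The only point needing care is the identification \(\nu=\lambda(d\to b)\) with a uniquely meaningful pair \((d,b)\), so that \Cref{thm:triangle-classification} applies literally. A given neighbor could in principle be realized by several pairs \((d,b)\). By the conjugate description, however, \(\nu'-\lambda'=-e_{\operatorname{col}(d)}+e_{\operatorname{col}(b)}\), and this determines the columns, hence the corners. So the representation is unique and the case analysis above is valid. Everything else is a direct dualization, and I expect no further obstacle.
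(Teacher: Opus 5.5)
Your proposal is correct and is exactly the argument the paper intends: the paper's proof of this proposition just says it is dual to Proposition~\ref{prop:large-full-star-maximal}, and you have written that dual argument out in full. Your remark that the pair $(d,b)$ is uniquely determined by $\nu'-\lambda'$ spells out a point the paper uses without stating it.
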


\begin{proof}
This is dual to Proposition~\ref{prop:large-full-star-maximal}.
\end{proof}

The one-dimensional case requires a separate treatment.

\begin{proposition}\label{prop:maximal-edge-classification}
Let
\[
e=\{\lambda,\lambda(c\to a)\}
\]
be an edge of \(G_n\). Then \(e\) is a maximal simplex of \(K_n\) if and only if
\[
|A_{\max}(\lambda,c)|=1
\qquad\text{and}\qquad
|C_{\max}(\lambda,a)|=1.
\]
\end{proposition}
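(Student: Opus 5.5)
The plan is to prove both directions separately, with the triangle classification (\Cref{thm:triangle-classification}) as the main tool. Throughout, write \(\mu=\lambda(c\to a)\). Note that \(a\in A_{\max}(\lambda,c)\) and \(c\in C_{\max}(\lambda,a)\) hold automatically, so both sets have size at least \(1\).

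For the forward direction we argue by contraposition. Suppose \(|A_{\max}(\lambda,c)|\ge 2\). Pick some \(a'\neq a\) in \(A_{\max}(\lambda,c)\). By \Cref{lem:star-triangle-criterion}, the vertex \(\lambda(c\to a')\) is adjacent to both \(\lambda\) and \(\mu\). We must also check that it is a new vertex, i.e. that \(\lambda(c\to a')\) is distinct from \(\lambda\) and from \(\mu\):
\begin{itemize}
\item it differs from \(\lambda\) by admissibility;
\item it differs from \(\mu\) because the conjugates \(\lambda'-e_{\operatorname{col}(c)}+e_{\operatorname{col}(a')}\) and \(\lambda'-e_{\operatorname{col}(c)}+e_{\operatorname{col}(a)}\) differ, as addable corners are determined by their columns.
\end{itemize}
Hence \(e\) lies in a triangle and is not maximal. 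The case \(|C_{\max}(\lambda,a)|\ge 2\) is dual: use \Cref{lem:top-triangle-criterion} together with the fact that removable corners are determined by their columns.

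For the converse, assume both sets are singletons and suppose some \(\nu\) is adjacent to both \(\lambda\) and \(\mu\). Since \(\nu\sim\lambda\), write \(\nu=\lambda(d\to b)\) with the transfer admissible. The triple \(\lambda,\mu,\nu\) is a triangle, so \Cref{thm:triangle-classification} gives \(d=c\) or \(b=a\).
\begin{itemize}
\item If \(d=c\), then \(b\in A_{\max}(\lambda,c)=\{a\}\), so \(b=a\) and \(\nu=\mu\).
\item If \(b=a\), then \(d\in C_{\max}(\lambda,a)=\{c\}\), so again \(\nu=\mu\).
\end{itemize}
So there is no third vertex, and \(e\) is maximal in the clique complex \(K_n\).

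The only delicate point is the bookkeeping that distinct corners yield distinct partitions. This is handled by the conjugate-coordinate description in \Cref{lem:adjacency-conjugate-criterion} together with the remark that corners are determined by their column indices. I do not expect a genuine obstacle beyond this.
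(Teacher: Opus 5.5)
Your proof is correct and follows the paper's argument essentially line by line. The forward direction goes by contraposition, exhibiting a star-triangle (resp.\ top-triangle) through \(e\) when \(|A_{\max}(\lambda,c)|\ge 2\) (resp.\ \(|C_{\max}(\lambda,a)|\ge 2\)), and the converse uses the triangle classification together with the singleton hypotheses; your explicit check that \(\lambda(c\to a')\neq\lambda(c\to a)\), via corners being determined by their column indices, fills in a small detail the paper leaves implicit.
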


\begin{proof}
Suppose first that \(|A_{\max}(\lambda,c)|\ge 2\). Choose \(a'\neq a\) in \(A_{\max}(\lambda,c)\). Then
\[
\{\lambda,\lambda(c\to a),\lambda(c\to a')\}
\]
is a star-triangle containing \(e\), so \(e\) is not maximal. Dually, if \(|C_{\max}(\lambda,a)|\ge 2\), then \(e\) lies in a top-triangle and is again not maximal.

Conversely, assume
\[
A_{\max}(\lambda,c)=\{a\},
\qquad
C_{\max}(\lambda,a)=\{c\}.
\]
Let \(\nu\) be a vertex adjacent to both endpoints of \(e\). Since \(\nu\sim\lambda\), write
\[
\nu=\lambda(d\to b).
\]
Then \(\lambda,\lambda(c\to a),\nu\) form a triangle, so by Theorem~\ref{thm:triangle-classification} either \(d=c\) or \(b=a\).

If \(d=c\), then
\[
\nu=\lambda(c\to b),
\]
and since \(A_{\max}(\lambda,c)=\{a\}\), one must have \(b=a\), so \(\nu=\lambda(c\to a)\). If \(b=a\), then
\[
\nu=\lambda(d\to a),
\]
and since \(C_{\max}(\lambda,a)=\{c\}\), one must have \(d=c\), so again \(\nu=\lambda(c\to a)\). Thus no third vertex distinct from the endpoints can be adjacent to both of them. Hence \(e\) is maximal.
\end{proof}

\begin{corollary}\label{cor:maximal-simplices-classification}
The maximal simplices of \(K_n\) are exactly:
\begin{enumerate}
    \item all full star-simplices \(\Sigma^{\mathrm{star}}_{\max}(\lambda,c)\) with \(|A_{\max}(\lambda,c)|\ge 2\);
    \item all full top-simplices \(\Sigma^{\mathrm{top}}_{\max}(\lambda,a)\) with \(|C_{\max}(\lambda,a)|\ge 2\);
    \item all edges
    \[
    \{\lambda,\lambda(c\to a)\}
    \]
    such that
    \[
    |A_{\max}(\lambda,c)|=1
    \qquad\text{and}\qquad
    |C_{\max}(\lambda,a)|=1.
    \]
\end{enumerate}
\end{corollary}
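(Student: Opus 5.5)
The plan is to prove the two inclusions separately: every listed simplex is maximal, and every maximal simplex appears in the list.

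\emph{Listed simplices are maximal.} For the three families this is already done. Full star-simplices with \(|A_{\max}(\lambda,c)|\ge 2\) are maximal in \(K_n\) by \Cref{prop:large-full-star-maximal}. Full top-simplices with \(|C_{\max}(\lambda,a)|\ge 2\) are maximal by \Cref{prop:large-full-top-maximal}. Edges satisfying both cardinality conditions are maximal by the ``if'' direction of \Cref{prop:maximal-edge-classification}.

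\emph{Every maximal simplex is listed.} Let \(\sigma\) be a maximal simplex of \(K_n\). I would split into cases by the number of vertices.
\begin{itemize}
\item[\textbf{Vertex.}] If \(\sigma\) is a single vertex \(\lambda\), it can only be maximal when \(\lambda\) has no neighbours. For \(n\ge 2\) every partition has a neighbour: either some part exceeds \(1\) and a cell can be moved to a new row, or \(\lambda=(1^n)\) and a cell can be moved onto the first row. So isolated vertices occur only for \(n=1\), where \(K_1\) is a point; this degenerate case I would note separately, or absorb it by treating the point as a degenerate full simplex with \(A_{\max}=\emptyset\).
\item[\textbf{Edge.}] If \(\sigma=\{\lambda,\lambda(c\to a)\}\) is an edge, the ``only if'' direction of \Cref{prop:maximal-edge-classification} gives \(|A_{\max}(\lambda,c)|=|C_{\max}(\lambda,a)|=1\), so \(\sigma\) is in family (3).
\item[\textbf{At least three vertices.}] Choose \(\lambda\in\sigma\). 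By \Cref{thm:clique-classification}, or directly by \Cref{lem:uniform-type} applied at \(\lambda\), \(\sigma\) lies in a star-simplex based at \((\lambda,c)\) or in a top-simplex based at \((\lambda,a)\). By the remark after \Cref{def:full-star-top}, this is in turn contained in \(\Sigma^{\mathrm{star}}_{\max}(\lambda,c)\) or \(\Sigma^{\mathrm{top}}_{\max}(\lambda,a)\). Maximality of \(\sigma\) forces equality. Since \(\sigma\) has at least three vertices, the relevant set \(A_{\max}\) or \(C_{\max}\) has at least two elements, so \(\sigma\) is in family (1) or (2).
\end{itemize}

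\emph{Expected obstacle.} The only delicate point is bookkeeping. An edge of type (3) is itself a full star-simplex with \(|A_{\max}|=1\) and a full top-simplex with \(|C_{\max}|=1\), which is why the cardinality conditions are needed to separate the families. Conversely, a full star-simplex with \(|A_{\max}|=1\) that is not a maximal edge is excluded correctly, because it lies in a top-triangle. I would also check that the ``only if'' direction of \Cref{prop:maximal-edge-classification} is stated relative to the base vertex \(\lambda\). The condition is symmetric in the endpoints, because a star-triangle seen from \(\lambda\) is a triangle of the edge regardless of which endpoint is chosen as base, so this costs nothing.
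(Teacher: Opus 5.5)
Your proof is correct and follows the paper's argument, which simply cites the clique classification, the maximality-within-family propositions, the maximality of full star/top simplices with $|A_{\max}|\ge 2$ or $|C_{\max}|\ge 2$, and the edge classification; you just make the case split by number of vertices explicit. Your vertex-case remark corrects an omission in the paper: for $n=1$ the complex $K_1$ is a single point, and its only maximal simplex is not in the list because $A_{\max}$ and $C_{\max}$ are empty there. So the statement as written needs $n\ge 2$.
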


\begin{proof}
This follows from Theorem~\ref{thm:clique-classification}, Propositions~\ref{prop:maximal-in-star-family} and \ref{prop:maximal-in-top-family}, Propositions~\ref{prop:large-full-star-maximal} and \ref{prop:large-full-top-maximal}, and Proposition~\ref{prop:maximal-edge-classification}.
\end{proof}

\begin{lemma}\label{lem:every-simplex-in-maximal}
Every simplex of \(K_n\) is contained in a maximal simplex of \(K_n\).
\end{lemma}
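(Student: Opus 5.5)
The plan is a finiteness argument. \(K_n\) is a finite simplicial complex, since \(\Par(n)\) is finite, so every simplex lies in a maximal one. To stay consistent with the explicit description in \Cref{cor:maximal-simplices-classification}, I would also give a direct argument through the canonical families.

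\emph{Abstract route.} Let \(\sigma\) be a simplex of \(K_n\), that is, a clique in \(G_n\). The set of simplices of \(K_n\) containing \(\sigma\) is nonempty and finite, because the vertex set \(\Par(n)\) is finite. Choose an element \(\tau\) of this set of maximal cardinality. Any simplex strictly containing \(\tau\) would also contain \(\sigma\) and have larger cardinality, which is impossible. Hence \(\tau\) is maximal in \(K_n\) and \(\sigma\subseteq\tau\). This argument alone proves the lemma.

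\emph{Explicit route.} Let \(\sigma\) be a simplex.
\begin{enumerate}
\item If \(\sigma\) is a single vertex \(\lambda\) and \(n\ge2\), then \(\lambda\) has a neighbour, which gives an edge containing it, and we pass to the edge case. If \(n=1\), the complex is a point, which is its own maximal simplex.
\item If \(\sigma\) has at least two vertices, then by \Cref{thm:clique-classification} it lies in a star-simplex or a top-simplex based at some \(\lambda\in\sigma\), and hence in the corresponding full simplex, say \(\Sigma^{\mathrm{star}}_{\max}(\lambda,c)\).
\item If \(|A_{\max}(\lambda,c)|\ge2\), that full simplex is maximal by \Cref{prop:large-full-star-maximal}, and we are done.
\item Otherwise \(\sigma\) is an edge \(\{\lambda,\lambda(c\to a)\}\). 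If also \(|C_{\max}(\lambda,a)|=1\), the edge is maximal by \Cref{prop:maximal-edge-classification}. If instead \(|C_{\max}(\lambda,a)|\ge2\), the edge lies in \(\Sigma^{\mathrm{top}}_{\max}(\lambda,a)\), which is maximal by \Cref{prop:large-full-top-maximal}.
\item The top-simplex case in step 2 is dual.
\end{enumerate}

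The main point needing care in the explicit route is the degenerate case in step 4, where the full simplex is just an edge. The abstract finiteness argument avoids this issue entirely, so I would present that as the proof and mention the explicit route only as a remark.
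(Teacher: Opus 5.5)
Your proof is correct, and the argument you propose to present takes a genuinely different route from the paper. The paper argues through the combinatorics. By \Cref{thm:clique-classification} the simplex lies in a star- or top-simplex, which it enlarges to the corresponding full simplex. If that full simplex is not maximal, the paper declares it an edge and uses \Cref{prop:maximal-edge-classification} to place it in a full simplex of the opposite type, implicitly maximal by \Cref{prop:large-full-star-maximal,prop:large-full-top-maximal}. Your finiteness argument needs only that \(\Par(n)\) is finite, so cliques have bounded size. It is the cleanest proof of the statement as posed, and it makes \Cref{cor:canonical-cover} immediate without any of the classification machinery. What the paper's route buys is an explicit identification of the maximal simplex as one of the three types in \Cref{cor:maximal-simplices-classification}; that information is already contained in the corollary itself, so nothing is lost by your approach.

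Your explicit route is essentially the paper's argument, but it is more careful at one point. The paper asserts that a non-maximal full simplex must be one-dimensional, which overlooks full simplices consisting of a single vertex. For example, for \(n\ge 2\) the full top-simplex of \((n)\) at its addable corner in row \(1\) has \(C_{\max}=\varnothing\) and equals \(\{(n)\}\). Nothing in the paper's enlargement step prevents a singleton from landing there, since \Cref{thm:clique-classification} is vacuous for one vertex. Your step~1 avoids this by treating singletons separately, so every full simplex you reach has at least two vertices.
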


\begin{proof}
Let \(\sigma\) be a simplex of \(K_n\). By Theorem~\ref{thm:clique-classification}, \(\sigma\) is contained in a star-simplex or a top-simplex. Enlarging within that family yields a full star-simplex or a full top-simplex.

If the resulting full simplex is maximal in \(K_n\), we are done. Otherwise it is one-dimensional, hence an edge, and Proposition~\ref{prop:maximal-edge-classification} shows that it lies in a larger simplex of the opposite type. In either case, \(\sigma\) is contained in a maximal simplex of \(K_n\).
\end{proof}

\begin{corollary}\label{cor:canonical-cover}
The maximal simplices of \(K_n\) cover \(K_n\).
\end{corollary}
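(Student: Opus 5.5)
The corollary follows directly from \Cref{lem:every-simplex-in-maximal}: the geometric realization of \(K_n\) is the union of the closed simplices of \(K_n\), and each closed simplex lies in a closed maximal simplex. So the union of the maximal simplices is all of \(K_n\).

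The steps are as follows. First, since \(\Par(n)\) is finite, \(K_n\) is a finite simplicial complex, and every point of \(|K_n|\) lies in the closed realization \(|\sigma|\) of some simplex \(\sigma\). Second, \Cref{lem:every-simplex-in-maximal} gives a maximal simplex \(\tau\supseteq\sigma\), and hence \(|\sigma|\subseteq|\tau|\). Third, vertices are covered as well. For \(n\ge 2\), every vertex \(\lambda\) has at least one neighbor: a transfer to or from a row changes the partition unless \(\lambda\) is forced, and for \(n\ge2\) both \((n)\) and \((1^n)\) have neighbors. So each vertex lies in an edge, and the previous argument applies. For \(n=1\), \(K_1\) is a single point, and that vertex is itself the unique maximal simplex. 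I would state this degenerate case explicitly, because \Cref{cor:maximal-simplices-classification} only lists simplices of dimension at least one.

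Finally, the maximal simplices are exactly the ones listed in \Cref{cor:maximal-simplices-classification}. So the cover consists of full star-simplices and full top-simplices of dimension at least two, together with the isolated maximal edges. The edges are themselves full star-simplices (and full top-simplices) with \(|A_{\max}|=1\), which is the form needed later for \(\mathcal C_n\).

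The main subtlety is not the covering itself, which is formal once \Cref{lem:every-simplex-in-maximal} is available. It is making sure that lemma's proof really produces a \emph{maximal} simplex. In the edge case, \Cref{prop:maximal-edge-classification} places the edge in a full simplex of the opposite type with at least two added vertices, and that simplex is maximal by \Cref{prop:large-full-star-maximal} or \Cref{prop:large-full-top-maximal}. Alternatively, finiteness of \(K_n\) alone guarantees that any simplex extends to a maximal one. I would cite finiteness as the cleanest justification and leave the classification to identify what the cover sets are.
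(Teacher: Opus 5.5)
Your proposal is correct and takes the paper's route: the paper derives the corollary in one line from Lemma~\ref{lem:every-simplex-in-maximal}. Your separate argument that every vertex has a neighbor is unnecessary, and its justification is vague, because each vertex is itself a \(0\)-simplex and is already covered by that lemma, or simply by finiteness of \(K_n\), which, as you note, suffices on its own.
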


\begin{proof}
Immediate from Lemma~\ref{lem:every-simplex-in-maximal}.
\end{proof}

\subsection{Summary}

We have shown that every clique in \(G_n\) is controlled by one of the two canonical mechanisms encoded by star- and top-simplices. In particular, the maximal simplices of \(K_n\) are completely classified, and they form a canonical cover of \(K_n\). In the next section we use this cover to pass to the nerve of \(K_n\).

\section{A canonical star/top cover and its nerve}
\label{sec:good-cover-and-nerve}

In this section we replace the cover of \(K_n\) by maximal simplices with a more uniform canonical cover by full star- and full top-simplices. This cover is better suited to the analysis carried out in the next section.

\subsection{The canonical cover}

Recall from Definition~\ref{def:full-star-top} that for every partition \(\lambda\vdash n\), every removable corner \(c\) of \(\lambda\), and every addable corner \(a\) of \(\lambda\), we have the full star-simplex
\[
\Sigma^{\mathrm{star}}_{\max}(\lambda,c)
=
\{\lambda\}\cup\{\lambda(c\to b): b\in A_{\max}(\lambda,c)\}
\]
and the full top-simplex
\[
\Sigma^{\mathrm{top}}_{\max}(\lambda,a)
=
\{\lambda\}\cup\{\lambda(d\to a): d\in C_{\max}(\lambda,a)\}.
\]

Let
\[
\mathcal C_n
\]
denote the family consisting of all distinct full star- and full top-simplices.

\begin{proposition}\label{prop:canonical-star-top-cover}
The family \(\mathcal C_n\) covers \(K_n\):
\[
K_n=\bigcup_{U\in\mathcal C_n} U.
\]
\end{proposition}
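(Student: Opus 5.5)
Since \(K_n\) is a simplicial complex, it is the union of its closed simplices. It therefore suffices to show that every simplex \(\sigma\) of \(K_n\) is a face of some member of \(\mathcal C_n\). Each member of \(\mathcal C_n\) is a clique of \(G_n\), hence a simplex of \(K_n\), together with all its faces. So the inclusion \(\bigcup_{U\in\mathcal C_n}U\subseteq K_n\) is automatic, and only the reverse inclusion needs an argument.

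For the reverse inclusion I would handle the empty simplex and the vertices separately. For a single vertex \(\lambda\), I would use the fact that \(\lambda\) has at least one removable corner \(c\); for example, the last cell of the last row always works. Then \(\lambda\in\Sigma^{\mathrm{star}}_{\max}(\lambda,c)\). This holds even when \(A_{\max}(\lambda,c)=\varnothing\), for instance when \(n=1\): in that case the full star-simplex is the \(0\)-simplex \(\{\lambda\}\), which by definition still belongs to \(\mathcal C_n\). It is worth making this degenerate case explicit so that isolated vertices are also covered.

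For a simplex \(\sigma\) with at least two vertices, I would invoke Theorem~\ref{thm:clique-classification}. It gives that \(\sigma\) is contained in a star-simplex based at some \((\lambda,c)\) with \(\lambda\in\sigma\), or in a top-simplex based at some \((\lambda,a)\) with \(\lambda\in\sigma\). In the star case, every other vertex of \(\sigma\) has the form \(\lambda(c\to b)\) with the transfer admissible. Hence \(b\in A_{\max}(\lambda,c)\), and so \(\sigma\subseteq\Sigma^{\mathrm{star}}_{\max}(\lambda,c)\); this is the remark following Definition~\ref{def:full-star-top}. The top case is dual and gives \(\sigma\subseteq\Sigma^{\mathrm{top}}_{\max}(\lambda,a)\). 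Alternatively, Lemma~\ref{lem:every-simplex-in-maximal} together with Corollary~\ref{cor:maximal-simplices-classification} shows directly that every simplex lies in a maximal simplex. Every maximal simplex is itself a full star- or full top-simplex: types (1) and (2) are so by definition. A type (3) maximal edge \(\{\lambda,\lambda(c\to a)\}\) with \(A_{\max}(\lambda,c)=\{a\}\) equals \(\Sigma^{\mathrm{star}}_{\max}(\lambda,c)\).

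There is no real obstacle here. The only point needing care is bookkeeping: checking that the full simplices produced really are the members of \(\mathcal C_n\) (they are indexed by \((\lambda,c)\) or \((\lambda,a)\), with duplicates identified), and that degenerate full simplices of dimension \(0\) or \(1\) are allowed in \(\mathcal C_n\). With these checks, the equality \(K_n=\bigcup_{U\in\mathcal C_n}U\) follows.
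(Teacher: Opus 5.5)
Your proof is correct and follows the paper's argument. The paper applies Theorem~\ref{thm:clique-classification} to place $\sigma$ inside a star- or top-simplex based at a vertex of $\sigma$, and then enlarges it to the corresponding full simplex, as remarked after Definition~\ref{def:full-star-top}. Your separate treatment of single vertices (the corner at the end of the last row, allowing $A_{\max}(\lambda,c)=\varnothing$ as for $n=1$) makes explicit a step the paper leaves implicit. Your alternative route through Corollary~\ref{cor:maximal-simplices-classification} is less safe, because that list omits the isolated maximal vertex of $K_1$, so keep the primary argument.
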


\begin{proof}
Let \(\sigma\) be a simplex of \(K_n\). By Theorem~\ref{thm:clique-classification}, \(\sigma\) is contained in a star-simplex or in a top-simplex. By Definition~\ref{def:full-star-top}, every star-simplex is contained in a full star-simplex and every top-simplex is contained in a full top-simplex. Hence \(\sigma\subseteq U\) for some \(U\in\mathcal C_n\). Since \(\sigma\) was arbitrary, the family \(\mathcal C_n\) covers \(K_n\).
\end{proof}

\subsection{Intersections of members of the cover}

The key observation is completely general.

\begin{lemma}\label{lem:intersection-of-simplices}
Let
\[
U_1,\dots,U_r\in \mathcal C_n.
\]
Then
\[
U_1\cap\cdots\cap U_r
\]
is either empty or a simplex of \(K_n\).
\end{lemma}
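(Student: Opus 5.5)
The plan is to reduce everything to the elementary fact that a subset of a simplex of a simplicial complex is again a simplex (or empty).

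First, I will record how the members of \(\mathcal C_n\) sit inside \(K_n\). Each member \(U\in\mathcal C_n\) is a full star-simplex \(\Sigma^{\mathrm{star}}_{\max}(\lambda,c)\) or a full top-simplex \(\Sigma^{\mathrm{top}}_{\max}(\lambda,a)\). By Lemmas~\ref{lem:star-triangle-criterion} and \ref{lem:top-triangle-criterion}, any two of its vertices are adjacent in \(G_n\), so its vertex set is a clique. Hence \(U\) is a simplex of \(K_n\). We regard \(U\) as the closed subcomplex consisting of \(U\) together with all its faces, which matches the way the cover is used in Proposition~\ref{prop:canonical-star-top-cover}.

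Next comes the intersection step. For simplices viewed as full subcomplexes \(\overline{U_i}\) (all subsets of the vertex set \(V(U_i)\)), a face lies in every \(\overline{U_i}\) exactly when its vertex set is contained in \(V(U_1)\cap\cdots\cap V(U_r)\). So the intersection is the full subcomplex on the vertex set
\[
W:=V(U_1)\cap\cdots\cap V(U_r).
\]
If \(W=\emptyset\), the intersection is empty. Otherwise \(W\) is a nonempty subset of the clique \(V(U_1)\), hence itself a clique, hence a simplex of \(K_n\). The intersection is precisely the closed simplex on \(W\).

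The only point needing care, rather than any real obstacle, is the convention: we must say explicitly that ``\(U\)'' denotes the closed simplex, or equivalently its geometric realization. Then intersections of realizations are realizations of intersections of vertex sets, since all \(U_i\) are faces of the single complex \(K_n\), and the intersection of two closed faces of a simplicial complex is a common face or empty. I would state this as a one-line remark and conclude. This is also what gives the goodness of the cover later, since each nonempty intersection is contractible.
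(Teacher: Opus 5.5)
Your argument is correct and is essentially the paper's own proof: every member of \(\mathcal C_n\) is a simplex of \(K_n\), and a finite intersection of closed simplices is either empty or the face spanned by the common vertices. The extra details you supply are points the paper leaves implicit, not a different route: you check via the star- and top-triangle lemmas that full star- and top-simplices are genuinely cliques, and you fix the closed-simplex convention.
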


\begin{proof}
Each \(U_i\in\mathcal C_n\) is a simplex of \(K_n\). The intersection of finitely many simplices in a simplicial complex is either empty or a simplex.
\end{proof}

\begin{corollary}\label{cor:cover-intersections-contractible}
Every nonempty finite intersection of members of \(\mathcal C_n\) is contractible.
\end{corollary}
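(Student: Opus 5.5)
The plan is to obtain the corollary directly from Lemma~\ref{lem:intersection-of-simplices}, together with the standard fact that the geometric realization of a simplex is contractible. Let \(U_1,\dots,U_r\in\mathcal C_n\) with \(U_1\cap\cdots\cap U_r\neq\varnothing\). By Lemma~\ref{lem:intersection-of-simplices}, this intersection is a simplex \(\sigma\) of \(K_n\), namely a nonempty finite clique of \(G_n\).

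First I will fix how the members of \(\mathcal C_n\) are regarded as subspaces of \(K_n\). Each \(U\in\mathcal C_n\) stands for the full subcomplex on its vertex set, that is, the closed simplex \(\overline U\) together with all its faces. Its geometric realization \(|\overline U|\subseteq|K_n|\) is the convex hull of its vertices inside the realization of that simplex. Geometric realization commutes with intersections of subcomplexes, and the intersection of the closed simplices \(\overline{U_i}\) is exactly the closed simplex on the common vertex set \(\bigcap_i U_i=\sigma\). Hence
\[
|\overline{U_1}|\cap\cdots\cap|\overline{U_r}| = |\overline{\sigma}|.
\]

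Next I will show that \(|\overline\sigma|\) is contractible. It is homeomorphic to a standard geometric simplex \(\Delta^{d}\) with \(d=|\sigma|-1\ge 0\). This space is convex, so the straight-line homotopy \(H(x,t)=(1-t)x+tv_0\) to any vertex \(v_0\) contracts it. The same argument covers the case \(r=1\), and the case where the intersection is a single vertex (\(d=0\)), where the space is a point.

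No genuine obstacle is expected. The only point needing care is the bookkeeping above: one must intersect closed subcomplexes rather than open cells, so that intersections of realizations match intersections of vertex sets. Once that is in place, \(\mathcal C_n\) is a finite cover of \(K_n\) by subcomplexes with all nonempty intersections contractible. This is what the nerve theorem needs in order to conclude \(K_n\simeq N(\mathcal C_n)\).
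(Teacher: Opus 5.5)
Your proposal is correct and follows the paper's argument: Lemma~\ref{lem:intersection-of-simplices} identifies the nonempty intersection as a simplex of \(K_n\), and a simplex is contractible. Your extra bookkeeping, where you intersect closed subcomplexes so that geometric realizations intersect exactly as the vertex sets do, makes explicit what the paper's one-line proof leaves implicit.
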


\begin{proof}
By Lemma~\ref{lem:intersection-of-simplices}, every nonempty finite intersection of members of \(\mathcal C_n\) is a simplex of \(K_n\), hence contractible.
\end{proof}

\subsection{The nerve of the canonical cover}

Let
\[
N_n:=N(\mathcal C_n)
\]
be the nerve of the cover \(\mathcal C_n\). Thus the vertices of \(N_n\) are the full star- and full top-simplices in \(\mathcal C_n\), and a finite subfamily
\[
\{U_0,\dots,U_p\}\subseteq \mathcal C_n
\]
spans a \(p\)-simplex of \(N_n\) if and only if
\[
U_0\cap\cdots\cap U_p\neq\varnothing.
\]

\begin{theorem}\label{thm:canonical-cover-good}
The family \(\mathcal C_n\) is a good cover of \(K_n\) in the simplicial sense: each member of \(\mathcal C_n\) is contractible, and every nonempty finite intersection of members of \(\mathcal C_n\) is contractible.
\end{theorem}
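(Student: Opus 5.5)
The statement has two parts, and both follow almost formally from what has already been established; the plan is simply to assemble them carefully.

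\emph{Covering.} By \Cref{prop:canonical-star-top-cover}, every simplex of \(K_n\) lies in some member of \(\mathcal C_n\). I will also record that each member of \(\mathcal C_n\) is a genuine simplex of \(K_n\), that is, a clique of \(G_n\). For a full star-simplex \(\Sigma^{\mathrm{star}}_{\max}(\lambda,c)\), every vertex other than \(\lambda\) is adjacent to \(\lambda\) by definition. Any two vertices \(\lambda(c\to a_1)\) and \(\lambda(c\to a_2)\) are adjacent by \Cref{lem:star-triangle-criterion}. The top case is identical, using \Cref{lem:top-triangle-criterion}. Thus each \(U\in\mathcal C_n\) is a simplex of \(K_n\); I regard it as the closed subcomplex consisting of its faces. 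These subcomplexes cover \(K_n\), and \(\mathcal C_n\) is finite because \(\Par(n)\) is finite.

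\emph{Contractibility.} Each member \(U\) is a closed simplex, hence its geometric realization is convex in a simplex and therefore contractible. For intersections I invoke \Cref{lem:intersection-of-simplices}: a nonempty finite intersection \(U_1\cap\cdots\cap U_r\) of full simplices (as vertex sets) is a nonempty subset of a clique. It is therefore a face of each \(U_i\), namely the simplex spanned by the common vertices. On the level of realizations, the intersection of the closed geometric simplices \(|U_i|\) in \(|K_n|\) is exactly the closed face spanned by the common vertex set. This holds because a point of \(|K_n|\) lies in \(|U_i|\) if and only if its support is contained in \(U_i\). Hence every nonempty finite intersection is a closed simplex, and therefore contractible; this is \Cref{cor:cover-intersections-contractible}.

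\emph{Where care is needed.} The only step needing attention is the identification between intersections of vertex sets and intersections of geometric realizations. An empty combinatorial intersection must correspond to an empty geometric intersection, and a nonempty one to a full face rather than something smaller or disconnected. The support argument above settles both points. Since the cover consists of finitely many subcomplexes with contractible intersections, it is a good cover in the simplicial sense. This is exactly what the nerve theorem, applied in the next step, requires to conclude \(N_n\simeq K_n\).
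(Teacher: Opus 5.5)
Your proposal is correct and follows the paper's own proof. Both obtain the covering from \Cref{prop:canonical-star-top-cover}, note that each member is a simplex and hence contractible, and deduce contractibility of nonempty intersections from \Cref{lem:intersection-of-simplices} and \Cref{cor:cover-intersections-contractible}. You also add two details the paper leaves implicit: an explicit check, via \Cref{lem:star-triangle-criterion} and \Cref{lem:top-triangle-criterion}, that each full star- or top-simplex really is a clique, and the support argument identifying intersections of geometric realizations with the face spanned by the common vertices.
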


\begin{proof}
By Proposition~\ref{prop:canonical-star-top-cover}, the family \(\mathcal C_n\) covers \(K_n\). Each \(U\in\mathcal C_n\) is a simplex, hence contractible. By Corollary~\ref{cor:cover-intersections-contractible}, every nonempty finite intersection of members of \(\mathcal C_n\) is contractible. This is exactly the required good-cover property.
\end{proof}

\begin{corollary}\label{cor:Kn-nerve-equivalence}\label{cor:nerve-equivalence}
The clique complex \(K_n\) is homotopy equivalent to the nerve \(N_n\):
\[
K_n\simeq N_n.
\]
\end{corollary}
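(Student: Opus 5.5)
The plan is to apply the Nerve Theorem in its simplicial form to the cover \(\mathcal C_n\) of \(K_n\) by subcomplexes. First I make the setting precise. Each member \(U\in\mathcal C_n\) is a simplex of \(K_n\), so its closure \(\overline U\) (the simplex together with all its faces) is a subcomplex of \(K_n\). By Proposition~\ref{prop:canonical-star-top-cover}, every simplex of \(K_n\) is a face of some \(U\in\mathcal C_n\). Hence the geometric realization \(|K_n|\) is the union of the closed subcomplexes \(|\overline U|\), \(U\in\mathcal C_n\). The cover is finite, because \(\Par(n)\) is finite and each partition has finitely many corners.

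Next I check the hypotheses of the Nerve Theorem for a finite cover of a simplicial complex by subcomplexes, in the form found in Björner's survey or Hatcher, Corollary~4G.3. For any subfamily \(U_0,\dots,U_p\), the intersection \(|\overline{U_0}|\cap\cdots\cap|\overline{U_p}|\) equals the realization of the closed face spanned by the common vertex set \(U_0\cap\cdots\cap U_p\). This holds because the closure of a simplex consists exactly of the faces on subsets of its vertex set. So the intersection is nonempty precisely when \(U_0\cap\cdots\cap U_p\neq\varnothing\), and in that case it is a closed simplex, hence contractible. This is the content of Lemma~\ref{lem:intersection-of-simplices}, Corollary~\ref{cor:cover-intersections-contractible}, and Theorem~\ref{thm:canonical-cover-good}.

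It follows that the nerve of the cover by closed subcomplexes coincides, simplex for simplex, with \(N_n=N(\mathcal C_n)\) as defined via vertex-set intersections. The Nerve Theorem then gives \(|K_n|\simeq|N_n|\).

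The only delicate point is bookkeeping rather than mathematics: the cover must consist of subcomplexes (closed simplices) and not open stars, so that the simplicial version of the Nerve Theorem applies. The identification of the nerve defined combinatorially with the nerve of the topological cover also needs the remark above. If one prefers to avoid citing the topological version, an alternative is Quillen's fiber lemma. Consider the poset map from the face poset of \(K_n\) to the poset of nonempty intersections of members of \(\mathcal C_n\), sending each face \(\sigma\) to the intersection of all members containing it. Its fibers are cones, so both the face poset and the intersection poset are homotopy equivalent to their common cross poset. I would present the direct Nerve Theorem citation as the main argument.
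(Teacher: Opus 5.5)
Your proposal is correct and follows the paper's route: the paper's proof is a one-line application of the nerve theorem for finite covers by contractible subcomplexes with contractible nonempty finite intersections, with the hypotheses supplied by Theorem~\ref{thm:canonical-cover-good}. Your extra bookkeeping makes explicit what the paper leaves implicit, namely passing to closed simplices and checking that the nerve of this cover by subcomplexes coincides with \(N_n\) as defined via vertex-set intersections.
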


\begin{proof}
Apply the nerve theorem for finite covers by contractible subcomplexes with contractible nonempty finite intersections; see, for instance, \cite[Section~10.6]{Kozlov2008}. The hypotheses are verified by Theorem~\ref{thm:canonical-cover-good}.
\end{proof}

\subsection{Summary}

We have constructed a canonical cover \(\mathcal C_n\) of \(K_n\) by full star- and full top-simplices and proved that it is a good cover. Hence
\[
K_n\simeq N_n,
\qquad
N_n:=N(\mathcal C_n).
\]
In the next section we do not collapse \(K_n\) directly. Instead, we work on the nerve \(N_n\) itself, construct a second natural cover of \(N_n\), and use its intersection poset to obtain a \(2\)-dimensional model.

\section{The anchor cover and a \texorpdfstring{\(2\)}{2}-dimensional model}
\label{sec:anchor-cover-2-model}\label{sec:collapsing-Kn}

In this section we replace the earlier direct collapse approach by a different reduction scheme.
We work not directly with \(K_n\), but with the nerve
\[
N_n:=N(\mathcal C_n)
\]
of the canonical cover \(\mathcal C_n\) introduced in Section~\ref{sec:good-cover-and-nerve}. By Corollary~\ref{cor:Kn-nerve-equivalence}, we already know that
\[
K_n\simeq N_n,
\]
so it suffices to construct a \(2\)-dimensional model for \(N_n\).
We first construct a second natural cover of \(N_n\), indexed by the vertices of \(K_n\), and then study the poset of its nonempty intersections. The crucial point is that this intersection poset has dimension at most \(2\).

\subsection{The anchor cover of \texorpdfstring{\(N_n\)}{N\_n}}

For every partition \(\lambda\vdash n\), define
\[
A_\lambda:=\{\,U\in\mathcal C_n:\lambda\in U\,\}.
\]
Since every member of \(A_\lambda\) contains \(\lambda\), the whole set \(A_\lambda\) spans a simplex of the nerve \(N_n\); we denote this simplex again by \(A_\lambda\subseteq N_n\).

\begin{lemma}\label{lem:anchor-cover}
The family
\[
\mathcal A_n:=\{A_\lambda:\lambda\vdash n\}
\]
covers \(N_n\).
\end{lemma}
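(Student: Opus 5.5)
To prove \Cref{lem:anchor-cover}, we must show that every simplex of the nerve \(N_n\) lies in some \(A_\lambda\). Since each \(A_\lambda\) is by definition the full simplex of \(N_n\) spanned by the vertices \(U\in\mathcal C_n\) with \(\lambda\in U\), it suffices to show the following: whenever a finite subfamily \(\{U_0,\dots,U_p\}\subseteq\mathcal C_n\) spans a simplex of \(N_n\), all of its vertices lie in a common \(A_\lambda\), i.e.\ there is a single partition \(\lambda\) with \(\lambda\in U_i\) for all \(i\).

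The argument runs as follows. Let \(\{U_0,\dots,U_p\}\) be a simplex of \(N_n\). By the definition of the nerve, \(U_0\cap\cdots\cap U_p\neq\varnothing\). By \Cref{lem:intersection-of-simplices}, this intersection is then a nonempty simplex of \(K_n\), so in particular it contains a vertex. Let \(\lambda\vdash n\) be such a vertex. Then \(\lambda\in U_i\) for every \(i\), so each \(U_i\) belongs to \(A_\lambda\). Since \(A_\lambda\) is a simplex of \(N_n\) and a subset of a simplex is a face, \(\{U_0,\dots,U_p\}\) is a face of \(A_\lambda\). In particular, every point of \(N_n\) lies in the geometric realization of some \(A_\lambda\), which is exactly the covering statement.

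One point deserves care, though it is not a real obstacle. The intersection \(U_0\cap\cdots\cap U_p\) is taken inside \(K_n\) as subcomplexes, that is, as geometric simplices, whereas we need a common \emph{vertex}. A nonempty intersection of faces of a simplicial complex is itself a face and therefore contains a vertex, so this is harmless; equivalently, one can read each \(U\in\mathcal C_n\) as a set of vertices and note that nonempty geometric intersection of two closed simplices means they share a common face, hence a vertex. The same remark also confirms the claim made before the lemma, namely that \(A_\lambda\) is indeed a simplex of \(N_n\): its members all contain \(\lambda\), and \(A_\lambda\) is finite because \(\mathcal C_n\) is finite.
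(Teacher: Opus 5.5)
Your proposal is correct and matches the paper's proof: take a simplex of \(N_n\), pick a partition \(\lambda\) in the nonempty common intersection of its members, and conclude that every member lies in \(A_\lambda\), so the simplex is a face of \(A_\lambda\). Your extra remark about geometric versus vertex-set intersections is harmless but unnecessary. The paper treats members of \(\mathcal C_n\) as vertex sets from the outset, so \(\lambda\) can be chosen directly from the intersection.
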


\begin{proof}
Let \(\sigma\) be a simplex of \(N_n\). By definition of the nerve, its common intersection
\[
I(\sigma):=\bigcap_{U\in\sigma}U
\]
is nonempty. Choose any \(\lambda\in I(\sigma)\). Then \(\lambda\in U\) for every vertex \(U\in\sigma\), so every \(U\in\sigma\) belongs to \(V_\lambda\). Hence \(\sigma\subseteq A_\lambda\). Since \(\sigma\) was arbitrary, the family \(\mathcal A_n\) covers \(N_n\).
\end{proof}

\begin{proposition}\label{prop:anchor-intersections-basic}
Let
\[
S=\{\lambda_0,\dots,\lambda_p\}
\]
be a finite set of partitions. Then
\[
A_{\lambda_0}\cap\cdots\cap A_{\lambda_p}\neq\varnothing
\]
if and only if \(S\) is a simplex of \(K_n\). Whenever the intersection is nonempty, it is itself a simplex of \(N_n\).
\end{proposition}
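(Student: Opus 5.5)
The plan is to interpret the intersection of the simplices \(A_{\lambda_i}\) of \(N_n\) concretely. Each \(A_\lambda\) is a simplex of \(N_n\), i.e.\ a full simplex on its vertex set \(\{U\in\mathcal C_n:\lambda\in U\}\). The intersection of simplices in a simplicial complex is the full simplex on the common vertex set, or empty. I would therefore first identify
\[
A_{\lambda_0}\cap\cdots\cap A_{\lambda_p}
\]
with the simplex spanned by the set
\[
\{\,U\in\mathcal C_n : \lambda_0,\dots,\lambda_p\in U\,\}=\{\,U\in\mathcal C_n: S\subseteq U\,\}.
\]
This set spans a simplex of \(N_n\) because all its members share the point \(\lambda_0\) (indeed all of \(S\)). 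This already gives the second assertion: whenever the intersection is nonempty, it is a face of each \(A_{\lambda_i}\), hence a simplex of \(N_n\).

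For the equivalence, the forward direction goes as follows. Suppose the intersection is nonempty. Then there is some \(U\in\mathcal C_n\) with \(S\subseteq U\). Since \(U\) is a simplex of \(K_n\) (a full star- or top-simplex, which is a clique by Lemmas~\ref{lem:star-triangle-criterion} and~\ref{lem:top-triangle-criterion}), its subset \(S\) is also a simplex of \(K_n\).

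Conversely, suppose \(S\) is a simplex of \(K_n\). By Proposition~\ref{prop:canonical-star-top-cover}, \(S\) is contained in some \(U\in\mathcal C_n\), since it lies in a star- or top-simplex by Theorem~\ref{thm:clique-classification}, which in turn lies in a full one. Then \(U\in A_{\lambda_i}\) for every \(i\), so the intersection contains the vertex \(U\) and is nonempty.

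The only point needing care is the first step: making precise that \(A_\lambda\) denotes the closed full simplex on its vertex set, so that the intersection of these subcomplexes equals the full simplex on the intersection of the vertex sets. A simplex \(\tau\) of \(N_n\) lies in every \(A_{\lambda_i}\) iff each vertex of \(\tau\) contains all \(\lambda_i\). I expect this bookkeeping, rather than any combinatorics, to be the main (minor) obstacle. The degenerate case \(p=0\) is trivial, since \(A_{\lambda_0}\) is nonempty: \(\lambda_0\) lies in some member of the cover.
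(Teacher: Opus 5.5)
Your proposal is correct and follows essentially the same route as the paper: the forward direction uses that a common member $U\in\mathcal C_n$ is a simplex of $K_n$ containing $S$, and the converse uses the covering property of $\mathcal C_n$ (Proposition~\ref{prop:canonical-star-top-cover}). The final assertion comes from identifying the intersection with the simplex spanned by $\{U\in\mathcal C_n : S\subseteq U\}$; you only spell out the subcomplex-versus-vertex-set bookkeeping more explicitly than the paper's one-sentence version.
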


\begin{proof}
Suppose first that
\[
A_{\lambda_0}\cap\cdots\cap A_{\lambda_p}\neq\varnothing.
\]
Then there exists some \(U\in\mathcal C_n\) containing all \(\lambda_0,\dots,\lambda_p\). Since \(U\) is a simplex of \(K_n\), the set \(S\) is a simplex of \(K_n\).

Conversely, suppose \(S\) is a simplex of \(K_n\). By Proposition~\ref{prop:canonical-star-top-cover}, every simplex of \(K_n\) is contained in some member of \(\mathcal C_n\). Thus there exists \(U\in\mathcal C_n\) with
\[
S\subseteq U.
\]
Then \(U\in V_{\lambda_i}\) for every \(i\), so \(U\) is a vertex of
\[
A_{\lambda_0}\cap\cdots\cap A_{\lambda_p}.
\]
Hence the intersection is nonempty.

Finally, assume the intersection is nonempty. Its vertices are precisely those members \(U\in\mathcal C_n\) that contain all \(\lambda_0,\dots,\lambda_p\). Any finite subfamily of such vertices still has nonempty common intersection containing \(S\), so it spans a simplex of the nerve. Therefore the whole intersection is itself a simplex of \(N_n\).
\end{proof}

\begin{corollary}\label{cor:anchor-cover-good}
The family \(\mathcal A_n\) is a good cover of \(N_n\) in the simplicial sense, and its nerve is canonically isomorphic to \(K_n\).
\end{corollary}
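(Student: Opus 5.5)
The corollary has two parts, and I would get both almost entirely from Proposition~\ref{prop:anchor-intersections-basic}. The first part says that \(\mathcal A_n\) is a good cover of \(N_n\) in the simplicial sense. Each \(A_\lambda\) is a simplex of \(N_n\): its vertices all contain \(\lambda\), so every subfamily has nonempty intersection. Each \(A_\lambda\) is also nonempty, because \(\{\lambda\}\) is a simplex of \(K_n\), so by Proposition~\ref{prop:canonical-star-top-cover} some \(U\in\mathcal C_n\) contains \(\lambda\). Lemma~\ref{lem:anchor-cover} shows that \(\mathcal A_n\) covers \(N_n\). Any nonempty finite intersection \(A_{\lambda_0}\cap\cdots\cap A_{\lambda_p}\) is a simplex of \(N_n\) by the last clause of Proposition~\ref{prop:anchor-intersections-basic}. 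Simplices are contractible, so the cover is good, exactly parallel to Theorem~\ref{thm:canonical-cover-good}.

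For the second part I would define the map on vertices by \(\lambda\mapsto A_\lambda\). The nerve \(N(\mathcal A_n)\) has vertex set \(\mathcal A_n\), and a finite subfamily \(\{A_{\lambda_0},\dots,A_{\lambda_p}\}\) spans a simplex exactly when the intersection is nonempty. By the first part of Proposition~\ref{prop:anchor-intersections-basic}, this happens if and only if \(\{\lambda_0,\dots,\lambda_p\}\) is a clique, that is, a simplex of \(K_n\). So the vertex map is a simplicial map that both preserves and reflects simplices. It is an isomorphism once it is a bijection on vertices.

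The main obstacle I expect is injectivity, \(A_\lambda=A_\mu\Rightarrow\lambda=\mu\). Since \(\mathcal A_n\) is formally a family of subsets of \(N_n\), coincidences could collapse vertices. I would argue as follows. Suppose \(\lambda\neq\mu\) and \(A_\lambda=A_\mu\). Then every member of \(\mathcal C_n\) containing \(\lambda\) also contains \(\mu\), so \(\mu\) is adjacent to \(\lambda\), say \(\mu=\lambda(c\to a)\). Now I would use the two full simplices at \(\mu\) itself. By symmetry of adjacency (Lemma~\ref{lem:basic-move-properties}), \(\lambda=\mu(c^*\to a^*)\) for the reverse transfer, with \(c^*\) the moved cell in its new position and \(a^*\) the vacated position. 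Consider a full star- or top-simplex based at \(\lambda\) using a different corner, for instance \(\Sigma^{\mathrm{star}}_{\max}(\lambda,d)\) with \(d\neq c\) or \(\Sigma^{\mathrm{top}}_{\max}(\lambda,b)\) with \(b\neq a\). This simplex contains \(\lambda\), and by Theorem~\ref{thm:triangle-classification} it cannot contain \(\mu\) unless the corners match.

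The delicate case is when \(\lambda\) has only the one removable corner \(c\) and the one addable corner \(a\) usable. When \(n\ge 2\), every partition has at least two addable corners (first row and new row), so I would expect some alternative corner to exist. I would carry out this finite case check carefully, including degenerate cases such as \(n=1\), where the statement is trivial. If a genuine counterexample appeared, the fallback is to regard \(\mathcal A_n\) as an indexed family \((A_\lambda)_{\lambda}\). The nerve of an indexed cover is then literally the complex on \(\Par(n)\) whose simplices are the sets with nonempty intersection, which is \(K_n\) by Proposition~\ref{prop:anchor-intersections-basic}.
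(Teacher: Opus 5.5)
Your proposal is correct and follows the paper's proof: goodness comes from Lemma~\ref{lem:anchor-cover} and Proposition~\ref{prop:anchor-intersections-basic}, and the isomorphism comes from the first clause of that proposition, with the paper silently treating \(\mathcal A_n\) as indexed by \(\lambda\), which is your fallback. Your injectivity check closes without any case analysis: if \(\mu=\lambda(c\to a)\), pick an addable corner \(b\neq a\) of \(\lambda\) (rows \(1\) and \(\ell+1\) always give two), and then \(\Sigma^{\mathrm{top}}_{\max}(\lambda,b)\in\mathcal C_n\) (equal to \(\{\lambda\}\) when \(C_{\max}(\lambda,b)=\varnothing\)) lies in \(A_\lambda\) but not in \(A_\mu\), because \(\mu'-\lambda'=-e_{k(c)}+e_{\ell(a)}\) with \(k(c)\neq\ell(a)\) determines \(a\); this conjugate computation, rather than Theorem~\ref{thm:triangle-classification}, is the right justification.
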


\begin{proof}
By Lemma~\ref{lem:anchor-cover}, the family \(\mathcal A_n\) covers \(N_n\). Each \(A_\lambda\) is a simplex of \(N_n\), hence contractible. By Proposition~\ref{prop:anchor-intersections-basic}, every nonempty finite intersection of members of \(\mathcal A_n\) is again a simplex of \(N_n\), hence contractible. Therefore \(\mathcal A_n\) is a good cover.

Its nerve has vertices indexed by the partitions \(\lambda\vdash n\), and by Proposition~\ref{prop:anchor-intersections-basic} a finite set
\[
\{A_{\lambda_0},\dots,A_{\lambda_p}\}
\]
spans a simplex of the nerve if and only if
\[
\{\lambda_0,\dots,\lambda_p\}
\]
is a simplex of \(K_n\). Thus the correspondence \(A_\lambda\leftrightarrow \lambda\) is a simplicial isomorphism from \(N(\mathcal A_n)\) to \(K_n\).
\end{proof}

\subsection{A rigidity lemma for higher intersections}

We now analyze the nonempty intersections
\[
A_S:=\bigcap_{\lambda\in S}A_\lambda
\]
for simplices \(S\subseteq K_n\).

Let \(\lambda\vdash n\), and let \(c\) be a removable corner of \(\lambda\). Recall that
\[
F_\lambda(c)=\{\lambda(c\to a):a\in A_{\max}(\lambda,c)\}
\]
is the corresponding star fiber, and denote by
\[
S_{\lambda,c}:=\Sigma^{\mathrm{star}}_{\max}(\lambda,c)
\]
the corresponding full star-simplex. Dually, for an addable corner \(a\) of \(\lambda\), write
\[
G_\lambda(a)=\{\lambda(c\to a):c\in C_{\max}(\lambda,a)\},
\qquad
T_{\lambda,a}:=\Sigma^{\mathrm{top}}_{\max}(\lambda,a).
\]

\begin{lemma}\label{lem:two-star-witnesses-force-apex}
Let \(U\in\mathcal C_n\) contain \(\lambda\), and suppose that, relative to \(\lambda\), the simplex \(U\) is star-type with support \(c\). Assume moreover that \(U\) contains two distinct vertices
\[
\mu_1=\lambda(c\to a_1),
\qquad
\mu_2=\lambda(c\to a_2),
\qquad
a_1\neq a_2.
\]
Then
\[
U=S_{\lambda,c}.
\]
\end{lemma}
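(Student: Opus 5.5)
The plan is to trap \(U\) between the three vertices \(\lambda,\mu_1,\mu_2\) and the full star-simplex \(S_{\lambda,c}\), and then use maximality. First I show \(U\subseteq S_{\lambda,c}\). Then I show that \(U\) is a maximal simplex of \(K_n\). Together these force equality.

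\emph{Step 1: containment.} Read ``star-type relative to \(\lambda\) with support \(c\)'' as saying that every vertex of \(U\) other than \(\lambda\) has the form \(\lambda(c\to a)\) for some addable corner \(a\). Each such transfer is an edge of \(G_n\), hence admissible. So \(a\in A_{\max}(\lambda,c)\), and every vertex of \(U\) lies in
\[
S_{\lambda,c}=\{\lambda\}\cup\{\lambda(c\to a):a\in A_{\max}(\lambda,c)\}.
\]
This gives \(U\subseteq S_{\lambda,c}\). Independently, \(a_1,a_2\in A_{\max}(\lambda,c)\) with \(a_1\neq a_2\), so \(|A_{\max}(\lambda,c)|\ge 2\). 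By Proposition~\ref{prop:large-full-star-maximal}, \(S_{\lambda,c}\) is a maximal simplex of \(K_n\).

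\emph{Step 2: \(U\) is maximal.} Since \(U\in\mathcal C_n\), it is a full simplex \(\Sigma^{\mathrm{star}}_{\max}(\nu,d)\) or \(\Sigma^{\mathrm{top}}_{\max}(\nu,b)\) for some base vertex \(\nu\). This base need not equal \(\lambda\), which is the only real subtlety. The vertices \(\lambda,\mu_1,\mu_2\) are pairwise distinct: \(\mu_i\neq\lambda\) by admissibility, and \(\mu_1'\neq\mu_2'\) because \(\operatorname{col}(a_1)\neq\operatorname{col}(a_2)\). Hence \(|U|\ge 3\).

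In the star case, the non-base vertices \(\nu(d\to x)\) for distinct \(x\) are distinct, since their conjugates differ. So \(|A_{\max}(\nu,d)|=|U|-1\ge 2\), and Proposition~\ref{prop:large-full-star-maximal} shows that \(U\) is maximal in \(K_n\). In the top case the same count gives \(|C_{\max}(\nu,b)|\ge 2\), and Proposition~\ref{prop:large-full-top-maximal} applies instead.

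\emph{Step 3: conclusion.} Since \(U\subseteq S_{\lambda,c}\), \(S_{\lambda,c}\) is a simplex of \(K_n\), and \(U\) is maximal, we get \(U=S_{\lambda,c}\).

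The main point to handle carefully is Step 2: \(U\) may be presented as a full simplex based at some \(\nu\neq\lambda\), possibly of top type. For that reason I avoid comparing bases altogether. I only use the cardinality count to certify maximality through the earlier propositions, which are stated for arbitrary base vertices. The one detail to verify is the injectivity of \(x\mapsto\nu(d\to x)\), and dually of \(x\mapsto\nu(x\to b)\). This follows directly from the conjugate description in Lemma~\ref{lem:adjacency-conjugate-criterion}.
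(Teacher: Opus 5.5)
Your proof is correct, but it takes a different route from the paper's.

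\textbf{The paper's route.} It argues entirely in conjugate coordinates. It first excludes the possibility that \(U\) is presented as a full top-simplex \(\Sigma^{\mathrm{top}}_{\max}(\eta,b)\): the two witnesses would force \(\ell(a_1)=\ell(a_2)\). For \(U=\Sigma^{\mathrm{star}}_{\max}(\nu,d)\), it then solves for the base, \(\nu'=\lambda'-e_{k(c)}+e_{k(d)}\). Using this, it rewrites each vertex description \(\nu'-e_{k(d)}+e_j\) as \(\lambda'-e_{k(c)}+e_j\) and back, which gives both inclusions \(U\subseteq S_{\lambda,c}\) and \(S_{\lambda,c}\subseteq U\) by hand.

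\textbf{Your route.} You take \(U\subseteq S_{\lambda,c}\) directly from the star-type hypothesis. The reverse inclusion then comes for free from maximality of \(U\) in \(K_n\). You certify that maximality with a cardinality count: \(|U|\ge 3\) forces the corner set of whichever presentation \(U\) has to contain at least two elements. Propositions~\ref{prop:large-full-star-maximal} and~\ref{prop:large-full-top-maximal} then apply.

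\textbf{What each approach buys.}
\begin{itemize}
\item Your version is shorter and needs no case split on the type of the presentation.
\item It also avoids the paper's somewhat formal handling of the cases where \(\lambda\) or some \(\mu_i\) is itself the base of \(U\). There an expression such as \(\lambda'=\nu'-e_{k(d)}+e_{p_0}\) only makes sense with \(p_0=k(d)\), which need not be an addable column.
\item Your containment step does not really depend on the star-type hypothesis. With the two witnesses it already follows from Lemma~\ref{lem:uniform-type}.
\item Your remark that \(S_{\lambda,c}\) is itself maximal is not used in Step~3; only maximality of \(U\) is needed.
\item The paper's computation yields structural information that yours does not. It shows the top-type presentation cannot occur, and that the base of \(U\) lies in \(S_{\lambda,c}\). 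So the star/top type of such a member of \(\mathcal C_n\) does not depend on the chosen base point.
\end{itemize}
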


\begin{proof}
Since \(U\in\mathcal C_n\), it is either a full star-simplex or a full top-simplex.

First suppose that \(U\) is a full top-simplex:
\[
U=\Sigma^{\mathrm{top}}_{\max}(\eta,b)
\]
for some partition \(\eta\) and some addable corner \(b\) of \(\eta\). Passing to conjugate partitions, there exist removable columns \(p,q_1,q_2\) of \(\eta\) such that
\[
\lambda'=\eta'-e_p+e_b,
\qquad
\mu_i'=\eta'-e_{q_i}+e_b
\quad (i=1,2).
\]
Subtracting the equation for \(\lambda'\) from that for \(\mu_i'\), we obtain
\[
\mu_i'-\lambda'=e_p-e_{q_i}.
\]
On the other hand, since \(\mu_i=\lambda(c\to a_i)\),
\[
\mu_i'-\lambda'=-e_{k(c)}+e_{\ell(a_i)}.
\]
Thus
\[
e_p-e_{q_i}=-e_{k(c)}+e_{\ell(a_i)},
\]
so necessarily
\[
p=\ell(a_i),\qquad q_i=k(c).
\]
This must hold for both \(i=1,2\), forcing \(\ell(a_1)=\ell(a_2)\), hence \(a_1=a_2\), contradiction. Therefore \(U\) cannot be a full top-simplex.

So \(U\) must be a full star-simplex:
\[
U=\Sigma^{\mathrm{star}}_{\max}(\nu,d)
\]
for some partition \(\nu\) and some removable corner \(d\) of \(\nu\). Again passing to conjugate coordinates, there exist addable columns \(p_0,p_1,p_2\) of \(\nu\) such that
\[
\lambda'=\nu'-e_{k(d)}+e_{p_0},
\qquad
\mu_i'=\nu'-e_{k(d)}+e_{p_i}
\quad (i=1,2).
\]
Subtracting the first equation from the others gives
\[
\mu_i'-\lambda'=-e_{p_0}+e_{p_i}.
\]
Comparing with
\[
\mu_i'-\lambda'=-e_{k(c)}+e_{\ell(a_i)},
\]
we get
\[
p_0=k(c),
\qquad
p_i=\ell(a_i),
\]
and therefore
\[
\nu'=\lambda'-e_{k(c)}+e_{k(d)}.
\]

Now let \(x\) be any vertex of \(U\). Then
\[
x'=\nu'-e_{k(d)}+e_j
\]
for some addable column \(j\) of \(\nu\). Substituting the expression for \(\nu'\), we obtain
\[
x'=\lambda'-e_{k(c)}+e_j.
\]
Hence \(x=\lambda\) when \(j=k(c)\), and otherwise \(x=\lambda(c\to a)\) for the addable corner \(a\) of \(\lambda\) with \(\ell(a)=j\). In particular, every vertex of \(U\) lies in
\[
\{\lambda\}\cup F_\lambda(c)=S_{\lambda,c}.
\]

Conversely, if \(y\in S_{\lambda,c}\), then either \(y=\lambda\) or
\[
y'=\lambda'-e_{k(c)}+e_j
\]
for some admissible target column \(j\). But then
\[
y'=\nu'-e_{k(d)}+e_j,
\]
so \(y\in U\). Therefore \(U=S_{\lambda,c}\).
\end{proof}

\begin{lemma}\label{lem:two-top-witnesses-force-apex}
Let \(U\in\mathcal C_n\) contain \(\lambda\), and suppose that, relative to \(\lambda\), the simplex \(U\) is top-type with support \(a\). Assume moreover that \(U\) contains two distinct vertices
\[
\nu_1=\lambda(c_1\to a),
\qquad
\nu_2=\lambda(c_2\to a),
\qquad
c_1\neq c_2.
\]
Then
\[
U=T_{\lambda,a}.
\]
\end{lemma}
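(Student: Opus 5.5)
The plan is to dualize the proof of Lemma~\ref{lem:two-star-witnesses-force-apex}, working throughout in conjugate coordinates via Lemma~\ref{lem:adjacency-conjugate-criterion}. By hypothesis
\[
\nu_i'-\lambda'=-e_{k(c_i)}+e_{\ell(a)}\qquad(i=1,2),
\]
with \(k(c_1)\neq k(c_2)\). Since \(U\in\mathcal C_n\), it is either a full star-simplex or a full top-simplex. I will first rule out the star case and then identify \(U\) with \(T_{\lambda,a}\) in the top case.

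\emph{Star case excluded.} Suppose \(U=\Sigma^{\mathrm{star}}_{\max}(\eta,d)\). Then there are addable columns \(p,q_1,q_2\) of \(\eta\) with
\[
\lambda'=\eta'-e_{k(d)}+e_p,\qquad \nu_i'=\eta'-e_{k(d)}+e_{q_i}.
\]
Subtracting gives \(\nu_i'-\lambda'=-e_p+e_{q_i}\). Comparing with the expression above forces \(p=k(c_i)\) for both \(i\), so \(k(c_1)=k(c_2)\), contradicting \(c_1\neq c_2\). This uses only the fact that a vector \(-e_u+e_v\) with \(u\neq v\) determines \(u\) and \(v\).

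\emph{Top case.} Suppose \(U=\Sigma^{\mathrm{top}}_{\max}(\eta,b)\). There are removable columns \(p_0,p_1,p_2\) of \(\eta\) with
\[
\lambda'=\eta'-e_{p_0}+e_{\ell(b)},\qquad \nu_i'=\eta'-e_{p_i}+e_{\ell(b)}.
\]
Here \(p_0\) may equal \(\ell(b)\)-adjacent data only in the degenerate case \(\lambda=\eta\), which is covered by the same formula with the convention that \(\lambda\) is the apex. Subtracting gives \(\nu_i'-\lambda'=e_{p_0}-e_{p_i}\). Matching this with \(-e_{k(c_i)}+e_{\ell(a)}\) yields \(p_0=\ell(a)\) and \(p_i=k(c_i)\). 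Hence
\[
\eta'=\lambda'-e_{\ell(b)}+e_{\ell(a)}.
\]

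\emph{Identifying the vertex sets.} Every vertex \(x\) of \(U\) has \(x'=\eta'-e_j+e_{\ell(b)}\) for a removable column \(j\) of \(\eta\), including the apex via \(j=\ell(b)\) read appropriately. Substituting gives
\[
x'=\lambda'-e_j+e_{\ell(a)}.
\]
So either \(x=\lambda\) (when \(j=\ell(a)\)), or \(x=\lambda(c\to a)\) for the removable corner \(c\) of \(\lambda\) in column \(j\), and this move is admissible because \(x\) is a genuine partition different from \(\lambda\). Thus \(U\subseteq T_{\lambda,a}\). Conversely, for \(y=\lambda(c\to a)\in T_{\lambda,a}\) we get \(y'=\eta'-e_{k(c)}+e_{\ell(b)}\). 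This is a partition adjacent to \(\eta\) via a transfer into \(b\), or equals \(\eta\) itself, so \(y\in U\) by fullness of \(U\).

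\emph{Main obstacle.} The delicate step is the bookkeeping around the apex. The apex \(\eta\) itself must be included, and it corresponds to the "trivial" index \(j=\ell(b)\) or \(j=p_0\). I also need to check that the column-to-corner translations stay valid: an index \(j\) coming from a removable column of \(\eta\) must correspond to a removable corner of \(\lambda\) with \(\lambda(c\to a)\) admissible, and conversely. I would handle this exactly as in Lemma~\ref{lem:two-star-witnesses-force-apex}, arguing that any vector \(\lambda'-e_j+e_{\ell(a)}\) that is a conjugate partition distinct from \(\lambda'\) is an admissible transfer. Corner data are determined by column indices, so these translations are legitimate.
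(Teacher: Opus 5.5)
Your proposal is correct and follows essentially the same route as the paper: you rule out a full star-simplex by matching $\nu_i'-\lambda'$ against $-e_p+e_{q_i}$ in conjugate coordinates, then in the top case you derive $\eta'+e_{\ell(b)}=\lambda'+e_{\ell(a)}$ and read off that the two vertex sets are equal. Your sign $\nu_i'-\lambda'=-e_{k(c_i)}+e_{\ell(a)}$ is the right one; the paper's proof displays the opposite sign, under which its star-case step to $k(c_1)=k(c_2)$ would not follow, whereas your $p=k(c_1)=k(c_2)$ gives the contradiction directly, and your explicit handling of the apex and of the column-to-corner translation is more careful than the paper's.
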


\begin{proof}
Since \(U\in\mathcal C_n\), it is either a full top-simplex or a full star-simplex.

First suppose that \(U\) is a full star-simplex:
\[
U=\Sigma^{\mathrm{star}}_{\max}(\eta,b)
\]
for some partition \(\eta\) and some removable corner \(b\) of \(\eta\). Passing to conjugate partitions, there exist addable columns \(p,q_1,q_2\) of \(\eta\) such that
\[
\lambda'=\eta'-e_{k(b)}+e_p,
\qquad
\nu_i'=\eta'-e_{k(b)}+e_{q_i}
\quad (i=1,2).
\]
Subtracting the equation for \(\lambda'\) from that for \(\nu_i'\), we obtain
\[
\nu_i'-\lambda'=-e_p+e_{q_i}.
\]
On the other hand, since \(\nu_i=\lambda(c_i\to a)\),
\[
\nu_i'-\lambda'=e_{k(c_i)}-e_{\ell(a)}.
\]
Thus
\[
-e_p+e_{q_i}=e_{k(c_i)}-e_{\ell(a)},
\]
so necessarily
\[
p=\ell(a),\qquad q_i=k(c_i).
\]
This must hold for both \(i=1,2\), forcing \(k(c_1)=k(c_2)\), hence \(c_1=c_2\), contradiction. Therefore \(U\) cannot be a full star-simplex.

So \(U\) must be a full top-simplex:
\[
U=\Sigma^{\mathrm{top}}_{\max}(\eta,b)
\]
for some partition \(\eta\) and some addable corner \(b\) of \(\eta\). Again passing to conjugate coordinates, there exist removable columns \(p_0,p_1,p_2\) of \(\eta\) such that
\[
\lambda'=\eta'-e_{p_0}+e_{\ell(b)},
\qquad
\nu_i'=\eta'-e_{p_i}+e_{\ell(b)}
\quad (i=1,2).
\]
Subtracting the first equation from the others gives
\[
\nu_i'-\lambda'=e_{p_0}-e_{p_i}.
\]
Comparing with
\[
\nu_i'-\lambda'=e_{k(c_i)}-e_{\ell(a)},
\]
we get
\[
p_0=\ell(a),\qquad p_i=k(c_i),
\]
and therefore
\[
\eta'=\lambda'+e_{\ell(a)}-e_{\ell(b)}.
\]

Now let \(x\) be any vertex of \(U\). Then
\[
x'=\eta'-e_j+e_{\ell(b)}
\]
for some removable column \(j\) of \(\eta\). Substituting the expression for \(\eta'\), we obtain
\[
x'=\lambda'+e_{\ell(a)}-e_j.
\]
Hence \(x=\lambda\) when \(j=\ell(a)\), and otherwise \(x=\lambda(c\to a)\) for the removable corner \(c\) of \(\lambda\) with \(k(c)=j\). In particular, every vertex of \(U\) lies in
\[
\{\lambda\}\cup G_\lambda(a)=T_{\lambda,a}.
\]

Conversely, if \(y\in T_{\lambda,a}\), then either \(y=\lambda\) or
\[
y'=\lambda'+e_{\ell(a)}-e_j
\]
for some admissible removable column \(j\). But then
\[
y'=\eta'-e_j+e_{\ell(b)},
\]
so \(y\in U\). Therefore \(U=T_{\lambda,a}\).
\end{proof}

\begin{corollary}\label{cor:vertices-of-anchor-simplex}
Every vertex \(U\) of the anchor simplex \(A_\lambda\) is of one of the following three kinds:
\begin{enumerate}
    \item a full star apex \(S_{\lambda,c}\) for some removable corner \(c\) of \(\lambda\);
    \item a full top apex \(T_{\lambda,a}\) for some addable corner \(a\) of \(\lambda\);
    \item an edge vertex
    \[
    E_{\lambda,\mu}:=\{\lambda,\mu\},
    \]
    where \(\mu\sim\lambda\) and this edge itself belongs to \(\mathcal C_n\).
\end{enumerate}
\end{corollary}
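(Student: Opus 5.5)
We take an arbitrary vertex \(U\) of \(A_\lambda\), that is, a member \(U\in\mathcal C_n\) with \(\lambda\in U\), and sort it by the number of its vertices. Every member of \(\mathcal C_n\) is a clique containing \(\lambda\), so \(U\setminus\{\lambda\}\) consists of neighbours of \(\lambda\), each of the form \(\lambda(c_i\to a_i)\). By Lemma~\ref{lem:uniform-type}, applied to the clique \(U\) with distinguished vertex \(\lambda\), either all \(c_i\) coincide or all \(a_i\) coincide. We call \(U\) star-type with support \(c\), or top-type with support \(a\), relative to \(\lambda\). When \(|U|\ge 3\), this type is unambiguous: two distinct vertices sharing both \(c\) and \(a\) would be equal.

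\emph{Case \(|U|\ge 3\).} Suppose \(U\) is star-type relative to \(\lambda\) with support \(c\). Then \(U\) contains two distinct vertices \(\lambda(c\to a_1)\) and \(\lambda(c\to a_2)\) with \(a_1\neq a_2\). Lemma~\ref{lem:two-star-witnesses-force-apex} then gives \(U=S_{\lambda,c}\), which is kind (1). Dually, if \(U\) is top-type with support \(a\), Lemma~\ref{lem:two-top-witnesses-force-apex} gives \(U=T_{\lambda,a}\), which is kind (2).

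\emph{Case \(|U|=2\).} Then \(U=\{\lambda,\mu\}\) with \(\mu\sim\lambda\), and \(U\in\mathcal C_n\) by assumption. So \(U\) is the edge vertex \(E_{\lambda,\mu}\), which is kind (3). The statement puts no further condition on kind (3) beyond membership in \(\mathcal C_n\), so nothing else needs checking. If one wants more, an edge member of \(\mathcal C_n\) is a full star- or top-simplex with a one-element fibre based at some apex. When that apex is \(\lambda\), the edge coincides with \(S_{\lambda,c}\) or \(T_{\lambda,a}\) and so also falls under (1) or (2). The three kinds therefore overlap, which is harmless for the corollary.

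\emph{Case \(|U|=1\).} Here \(U=\{\lambda\}\) would be a full simplex with empty fibre. This is the only delicate point. Such a \(U\) equals \(S_{\lambda,c}\) (if \(A_{\max}(\lambda,c)=\varnothing\)) or \(T_{\lambda,a}\) (if \(C_{\max}(\lambda,a)=\varnothing\)), because its only possible apex is \(\lambda\) itself. It therefore still falls under (1) or (2).

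The main obstacle is noticing that the rigidity lemmas need two distinct witnesses. The degenerate sizes \(|U|\le 2\) must therefore be handled separately rather than fed into Lemmas~\ref{lem:two-star-witnesses-force-apex} and~\ref{lem:two-top-witnesses-force-apex}. Once this is done, the case split above exhausts all possibilities.
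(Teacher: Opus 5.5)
Your proof is correct and follows the paper's route. You classify \(U\) as star- or top-type relative to \(\lambda\) using the clique classification, apply Lemma~\ref{lem:two-star-witnesses-force-apex} or Lemma~\ref{lem:two-top-witnesses-force-apex} when there are two witnesses, and read off an edge vertex when there is only one. Your separate treatment of \(|U|=1\) is a genuine improvement: the paper's proof skips this case, yet it does occur. For instance, \(\Sigma^{\mathrm{top}}_{\max}((n),a)=\{(n)\}\) when \(a\) is the addable corner in the first row. Your observation that such a singleton is \(S_{\lambda,c}\) or \(T_{\lambda,a}\) with empty fibre correctly closes that gap.
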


\begin{proof}
Let \(U\) be a vertex of \(A_\lambda\). By Theorem~\ref{thm:clique-classification}, the simplex \(U\) is either star-type or top-type relative to \(\lambda\).

If \(U\) is star-type relative to \(\lambda\) with support \(c\) and contains at least two vertices of \(F_\lambda(c)\), then Lemma~\ref{lem:two-star-witnesses-force-apex} gives
\[
U=S_{\lambda,c}.
\]
If it contains only one such vertex besides \(\lambda\), then \(U\) is an edge \(\{\lambda,\mu\}\).

The top case is dual, using Lemma~\ref{lem:two-top-witnesses-force-apex}.
\end{proof}

\begin{theorem}\label{thm:anchor-intersection-classification}
Let
\[
S=\{\lambda_0,\dots,\lambda_p\}
\]
be a simplex of \(K_n\), and write
\[
A_S:=A_{\lambda_0}\cap\cdots\cap A_{\lambda_p}.
\]
Then:
\begin{enumerate}
    \item if \(|S|=1\), say \(S=\{\lambda\}\), then
    \[
    A_S=A_\lambda,
    \]
    a simplex of \(N_n\) of arbitrary dimension;
    \item if \(|S|=2\), say \(S=\{\lambda,\mu\}\) with \(\lambda\sim\mu\), then
    \[
    A_S=A_\lambda\cap A_\mu
    \]
    is a simplex of \(N_n\) with at most three vertices, hence of dimension at most \(2\);
    \item if \(|S|\ge 3\), then \(A_S\) is a single vertex of \(N_n\).
\end{enumerate}
\end{theorem}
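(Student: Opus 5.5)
The plan is to reduce everything to the classification of the vertices of a single anchor simplex. We first identify $A_S$ as the set of members $U\in\mathcal C_n$ with $S\subseteq U$. Then we pin down these $U$ using \Cref{cor:vertices-of-anchor-simplex} together with the rigidity \Cref{lem:two-star-witnesses-force-apex,lem:two-top-witnesses-force-apex}. By \Cref{prop:anchor-intersections-basic}, for a simplex $S$ of $K_n$ the intersection $A_S$ is nonempty and is itself a simplex of $N_n$. So in each case it suffices to bound the number of vertices of $A_S$. Part (1) is immediate from the definitions: $A_\lambda$ can have many vertices, namely one for each full star/top simplex through $\lambda$.

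For part (2), write $\mu=\lambda(c\to a)$ and let $U\in A_\lambda\cap A_\mu$. By \Cref{cor:vertices-of-anchor-simplex}, $U$ is $S_{\lambda,c'}$, $T_{\lambda,a'}$, or an edge vertex $E_{\lambda,\nu}$.
\begin{itemize}
\item If $U=S_{\lambda,c'}$, then $\mu\in U$ gives $\mu=\lambda(c'\to b)$ for some $b$. Comparing conjugates, $\mu'=\lambda'-e_{k(c)}+e_{\ell(a)}=\lambda'-e_{k(c')}+e_{\ell(b)}$. This forces $k(c')=k(c)$, hence $c'=c$, since removable corners are determined by their columns.
\item Dually, if $U=T_{\lambda,a'}$, then $a'=a$.
\item If $U$ is an edge vertex, it must be $E_{\lambda,\mu}$.
\end{itemize}
Hence $A_\lambda\cap A_\mu\subseteq\{S_{\lambda,c},T_{\lambda,a},E_{\lambda,\mu}\}$, which has at most three vertices. (Which of them actually occur depends on the sizes $|A_{\max}(\lambda,c)|$ and $|C_{\max}(\lambda,a)|$, but we only need the upper bound.) So $A_S$ has dimension at most $2$.

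For part (3), pick $\lambda\in S$ and two further distinct vertices $\mu_i=\lambda(c_i\to a_i)\in S$, $i=1,2$. Since $\mu_1\ne\mu_2$, the conjugate formula shows that $(c_1,a_1)\ne(c_2,a_2)$. By \Cref{thm:triangle-classification}, exactly one of $c_1=c_2$ or $a_1=a_2$ holds; by symmetry assume $c_1=c_2=:c$ and $a_1\ne a_2$.

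Now let $U\in A_S$ be arbitrary. Then $U$ is a clique containing $\lambda,\mu_1,\mu_2$. Applying \Cref{lem:uniform-type} to $U$ viewed from $\lambda$, all vertices of $U\setminus\{\lambda\}$ have the form $\lambda(c\to\cdot)$, because the targets $a_1\ne a_2$ already differ. So $U$ is star-type relative to $\lambda$ with support $c$, and it contains the two witnesses $\mu_1,\mu_2$. \Cref{lem:two-star-witnesses-force-apex} then gives $U=S_{\lambda,c}$. In the top case, \Cref{lem:two-top-witnesses-force-apex} gives $U=T_{\lambda,a}$ instead. Thus $A_S$ has exactly one vertex, and it is nonempty by \Cref{prop:anchor-intersections-basic}.

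The main obstacle is the step verifying that an arbitrary $U\in A_S$ satisfies the hypothesis ``star-type relative to $\lambda$ with support $c$'' of the rigidity lemma. This needs the uniform-type argument applied to $U$ itself rather than to $S$. A second point to make explicit is that the pairs $(c,a)$ are recovered from $\mu$ via conjugate coordinates, so that distinct vertices cannot share both corners.
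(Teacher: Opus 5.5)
Your proposal is correct and follows essentially the same route as the paper. In part (2) you restrict the three vertex types of Corollary~\ref{cor:vertices-of-anchor-simplex} using \(\mu\in U\); in part (3) you show that every \(U\in A_S\) is star-type (resp.\ top-type) relative to \(\lambda\) with the common support, then apply Lemma~\ref{lem:two-star-witnesses-force-apex} (resp.\ Lemma~\ref{lem:two-top-witnesses-force-apex}). The differences are cosmetic: you take an arbitrary \(\lambda\in S\) and cite Lemma~\ref{lem:uniform-type} explicitly, whereas the paper chooses the minimal-height vertex of \(S\), a choice its argument never actually uses.
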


\begin{proof}
The first statement is immediate from the definition.

For the second, let \(\mu=\lambda(c_\mu\to a_\mu)\). If \(U\) is a vertex of \(A_\lambda\cap A_\mu\), then \(U\) is a vertex of \(A_\lambda\). Since \(\mu\in U\), the three cases of Corollary~\ref{cor:vertices-of-anchor-simplex} specialize as follows:
\begin{enumerate}
    \item in the star-apex case, necessarily \(c=c_\mu\);
    \item in the top-apex case, necessarily \(a=a_\mu\);
    \item in the edge case, necessarily \(\nu=\mu\).
\end{enumerate}
Hence every vertex of \(A_\lambda\cap A_\mu\) belongs to the set
\[
\{\,S_{\lambda,c_\mu},\ T_{\lambda,a_\mu},\ E_{\lambda,\mu}\,\},
\]
with the understanding that some of these three vertices may be absent if they do not belong to \(\mathcal C_n\). Therefore \(A_\lambda\cap A_\mu\) has at most three vertices.

For the third statement, let \(|S|\ge 3\), and let \(\lambda\) be the unique vertex of minimal height in the simplex \(S\subseteq K_n\). By Theorem~\ref{thm:clique-classification}, the simplex \(S\) is either star-type or top-type relative to \(\lambda\).

Assume first that \(S\) is star-type relative to \(\lambda\), with support \(c\). Then \(S\) contains at least two distinct vertices
\[
\mu_1=\lambda(c\to a_1),
\qquad
\mu_2=\lambda(c\to a_2),
\qquad
a_1\neq a_2.
\]

Let \(U\) be a vertex of \(A_S\). Then \(U\in\mathcal C_n\) contains every vertex of \(S\), and in particular
\[
\lambda,\mu_1,\mu_2\in U.
\]
Since \(U\) is a simplex of \(K_n\) containing \(\lambda\), Theorem~\ref{thm:clique-classification} applies to \(U\) relative to \(\lambda\): the simplex \(U\) is either star-type or top-type relative to \(\lambda\).

It cannot be top-type relative to \(\lambda\), because a top-type simplex through \(\lambda\) cannot contain two vertices of the form
\[
\lambda(c\to a_1),\qquad \lambda(c\to a_2)
\]
with \(a_1\neq a_2\). Therefore \(U\) is star-type relative to \(\lambda\).

Moreover, in a star-type simplex through \(\lambda\), all noncentral vertices have the same removable corner. Since both \(\mu_1\) and \(\mu_2\) belong to \(U\), that common removable corner must be \(c\). Thus \(U\) is star-type relative to \(\lambda\) with support \(c\). By Lemma~\ref{lem:two-star-witnesses-force-apex},
\[
U=S_{\lambda,c}.
\]

Hence every vertex of \(A_S\) is the same vertex \(S_{\lambda,c}\), and therefore
\[
A_S=\{S_{\lambda,c}\}.
\]

The top-type case is dual. If \(S\) is top-type relative to \(\lambda\), with support \(a\), then \(S\) contains two distinct vertices
\[
\nu_1=\lambda(c_1\to a),
\qquad
\nu_2=\lambda(c_2\to a),
\qquad
c_1\neq c_2.
\]
For any \(U\in A_S\), the simplex \(U\) contains \(\lambda,\nu_1,\nu_2\), so it cannot be star-type relative to \(\lambda\). Hence it is top-type relative to \(\lambda\), and its common addable corner must be \(a\). By Lemma~\ref{lem:two-top-witnesses-force-apex},
\[
U=T_{\lambda,a}.
\]
Thus
\[
A_S=\{T_{\lambda,a}\}.
\]

This proves the third statement.
\end{proof}

\subsection{The intersection poset of the anchor cover}

Let
\[
\mathcal P_n:=\{\text{nonempty simplices of }K_n\},
\]
ordered by inclusion. Define, for every \(S\in\mathcal P_n\),
\[
A_S:=\bigcap_{\lambda\in S} A_\lambda.
\]

We now introduce a closure operator on \(\mathcal P_n\), induced by the anchor cover.

\begin{definition}\label{def:anchor-closure}
For \(S\in\mathcal P_n\), define
\[
\operatorname{cl}_{\mathcal A}(S)
:=
\{\mu\in K_n^{(0)} : A_S\subseteq A_\mu\}.
\]
By Proposition~\ref{prop:anchor-closure-operator}, this set is itself a simplex of \(K_n\), so \(\operatorname{cl}_{\mathcal A}(S)\in\mathcal P_n\).
\end{definition}

\begin{proposition}\label{prop:anchor-closure-operator}
The map
\[
\operatorname{cl}_{\mathcal A}:\mathcal P_n\to \mathcal P_n
\]
is a closure operator.
\end{proposition}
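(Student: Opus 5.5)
We must prove that \(\operatorname{cl}_{\mathcal A}\) is well defined as a map \(\mathcal P_n\to\mathcal P_n\), and that it is extensive, monotone and idempotent. The approach is to treat it as the closure operator of the antitone Galois connection between nonempty simplices of \(K_n\) and subfamilies of \(\mathcal C_n\). The connection sends \(S\) to \(A_S\) and a family \(\mathcal F\) to \(\bigcap_{U\in\mathcal F}U\).

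\emph{Well-definedness.} For \(S\in\mathcal P_n\), Proposition~\ref{prop:anchor-intersections-basic} gives \(A_S\neq\varnothing\), so we may fix a vertex \(U_0\in A_S\). If \(\mu\in\operatorname{cl}_{\mathcal A}(S)\), then \(U_0\in A_S\subseteq A_\mu\), so \(\mu\in U_0\). Hence \(\operatorname{cl}_{\mathcal A}(S)\subseteq U_0\). More precisely,
\[
\operatorname{cl}_{\mathcal A}(S)=\bigcap_{U\in A_S}U,
\]
since \(A_S\subseteq A_\mu\) means exactly that every \(U\in A_S\) contains \(\mu\). This is a nonempty intersection of simplices of \(K_n\), and it contains \(S\). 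By Lemma~\ref{lem:intersection-of-simplices} it is a simplex of \(K_n\), so it lies in \(\mathcal P_n\).

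\emph{Extensivity and monotonicity.} If \(\mu\in S\), then \(A_S\subseteq A_\mu\) by the definition of \(A_S\), so \(S\subseteq\operatorname{cl}_{\mathcal A}(S)\). If \(S\subseteq T\), then \(A_T\subseteq A_S\). Hence \(A_S\subseteq A_\mu\) implies \(A_T\subseteq A_\mu\), which gives \(\operatorname{cl}_{\mathcal A}(S)\subseteq\operatorname{cl}_{\mathcal A}(T)\).

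\emph{Idempotence.} This is the one step needing a small argument. The key point is that \(A_{\operatorname{cl}_{\mathcal A}(S)}=A_S\). Extensivity and antitonicity of \(S\mapsto A_S\) give \(A_{\operatorname{cl}_{\mathcal A}(S)}\subseteq A_S\). Conversely, each \(\mu\in\operatorname{cl}_{\mathcal A}(S)\) satisfies \(A_S\subseteq A_\mu\), so
\[
A_S\subseteq\bigcap_{\mu\in\operatorname{cl}_{\mathcal A}(S)}A_\mu=A_{\operatorname{cl}_{\mathcal A}(S)}.
\]
Since \(\operatorname{cl}_{\mathcal A}(T)\) depends only on \(A_T\), we conclude \(\operatorname{cl}_{\mathcal A}(\operatorname{cl}_{\mathcal A}(S))=\operatorname{cl}_{\mathcal A}(S)\).

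Nothing here is a serious obstacle. The only place a specific earlier result is needed is well-definedness, which requires both the nonemptiness of \(A_S\) for simplices \(S\) (Proposition~\ref{prop:anchor-intersections-basic}) and the fact that intersections of cover members are simplices (Lemma~\ref{lem:intersection-of-simplices}).
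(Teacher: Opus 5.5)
Your proof is correct and matches the paper's argument for extensivity, monotonicity, and idempotence, the last via the same identity \(A_{\operatorname{cl}_{\mathcal A}(S)}=A_S\).

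The only difference is in showing that \(\operatorname{cl}_{\mathcal A}(S)\) is a simplex. The paper argues pairwise: for \(\mu,\nu\in\operatorname{cl}_{\mathcal A}(S)\) one has \(A_\mu\cap A_\nu\supseteq A_S\neq\varnothing\), so \(\{\mu,\nu\}\) is an edge, and it then tacitly uses that \(K_n\) is a flag complex. Your identification \(\operatorname{cl}_{\mathcal A}(S)=\bigcap_{U\in A_S}U\subseteq U_0\) instead exhibits it directly as a face of a single cover member. This avoids the flag property and makes explicit the nonemptiness of \(A_S\), which the paper uses without comment.
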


\begin{proof}
We verify the three defining properties.

\emph{Extensive.} If \(\lambda\in S\), then by definition
\[
A_S\subseteq A_\lambda,
\]
so \(\lambda\in \operatorname{cl}_{\mathcal A}(S)\). Thus
\[
S\subseteq \operatorname{cl}_{\mathcal A}(S).
\]

\emph{Monotone.} If \(S\subseteq T\), then
\[
A_T\subseteq A_S.
\]
Therefore, if \(\mu\in \operatorname{cl}_{\mathcal A}(S)\), so that \(A_S\subseteq A_\mu\), then also
\[
A_T\subseteq A_S\subseteq A_\mu,
\]
hence \(\mu\in \operatorname{cl}_{\mathcal A}(T)\). So
\[
\operatorname{cl}_{\mathcal A}(S)\subseteq \operatorname{cl}_{\mathcal A}(T).
\]

\emph{Idempotent.} Set \(C:=\operatorname{cl}_{\mathcal A}(S)\). Since \(S\subseteq C\), we have
\[
A_C\subseteq A_S.
\]
Conversely, for every \(\mu\in C\) we have \(A_S\subseteq A_\mu\), so
\[
A_S\subseteq \bigcap_{\mu\in C} A_\mu = A_C.
\]
Hence
\[
A_C=A_S.
\]
Now
\[
\operatorname{cl}_{\mathcal A}(C)
=
\{\mu: A_C\subseteq A_\mu\}
=
\{\mu: A_S\subseteq A_\mu\}
=
\operatorname{cl}_{\mathcal A}(S)
=
C.
\]
Therefore \(\operatorname{cl}_{\mathcal A}\) is idempotent.

Finally, if \(\mu,\nu\in \operatorname{cl}_{\mathcal A}(S)\), then
\[
A_S\subseteq A_\mu\cap A_\nu,
\]
so \(A_{\{\mu,\nu\}}\neq\varnothing\). By Proposition~\ref{prop:anchor-intersections-basic}, the set \(\{\mu,\nu\}\) is a simplex of \(K_n\). Thus \(\operatorname{cl}_{\mathcal A}(S)\) is itself a simplex of \(K_n\), i.e. an element of \(\mathcal P_n\).
\end{proof}

Let
\[
\mathcal J_n:=\{\,A_S : S\in\mathcal P_n\,\},
\]
where equal intersections are identified, and order \(\mathcal J_n\) by inclusion.

\begin{proposition}\label{prop:closed-simplices-equal-intersections}
The assignment
\[
S\longmapsto A_S
\]
induces an order-reversing bijection between the fixed simplices of the closure operator \(\operatorname{cl}_{\mathcal A}\) and the poset \(\mathcal J_n\).

Equivalently, if
\[
\operatorname{Fix}(\operatorname{cl}_{\mathcal A})
:=
\{\,S\in\mathcal P_n : \operatorname{cl}_{\mathcal A}(S)=S\,\},
\]
then
\[
\operatorname{Fix}(\operatorname{cl}_{\mathcal A})^{\mathrm{op}}
\cong
\mathcal J_n.
\]
\end{proposition}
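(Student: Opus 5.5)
This is the standard Galois-type correspondence for a closure operator defined through an intersection map. The key identity to extract from the idempotence proof of Proposition~\ref{prop:anchor-closure-operator} is
\[
A_{\operatorname{cl}_{\mathcal A}(S)}=A_S\qquad\text{for every }S\in\mathcal P_n .
\]
Together with the obvious fact that \(S\subseteq T\) implies \(A_T\subseteq A_S\), this should give everything.

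\emph{Well-definedness and surjectivity.} The map \(\Phi:\operatorname{Fix}(\operatorname{cl}_{\mathcal A})\to\mathcal J_n\), \(S\mapsto A_S\), lands in \(\mathcal J_n\) by definition. For surjectivity, take an arbitrary element \(A_S\in\mathcal J_n\) with \(S\in\mathcal P_n\). By Proposition~\ref{prop:anchor-intersections-basic} it is nonempty. The set \(C:=\operatorname{cl}_{\mathcal A}(S)\) lies in \(\mathcal P_n\) by Proposition~\ref{prop:anchor-closure-operator}, and it is fixed by idempotence. The identity above then gives \(\Phi(C)=A_C=A_S\).

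\emph{Injectivity.} The point is that a fixed simplex is recovered from its intersection by
\[
S=\operatorname{cl}_{\mathcal A}(S)=\{\mu: A_S\subseteq A_\mu\}.
\]
So if \(A_S=A_T\) with both \(S\) and \(T\) fixed, the right-hand sides coincide and \(S=T\).

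\emph{Order reversal in both directions.} If \(S\subseteq T\), then \(A_T\subseteq A_S\) directly. Conversely, suppose \(A_T\subseteq A_S\) with \(S\) and \(T\) fixed. For any \(\mu\in S\) we have \(A_S\subseteq A_\mu\), hence \(A_T\subseteq A_\mu\), so \(\mu\in\operatorname{cl}_{\mathcal A}(T)=T\). Thus \(S\subseteq T\), and \(\Phi\) is an isomorphism \(\operatorname{Fix}(\operatorname{cl}_{\mathcal A})^{\mathrm{op}}\cong\mathcal J_n\).

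I do not expect a real obstacle here. The one point needing care is that \(\mathcal J_n\) identifies equal intersections: that is exactly what makes injectivity meaningful, and the recovery formula handles it. It is also worth noting that \(A_S\) is always nonempty for \(S\in\mathcal P_n\), by Proposition~\ref{prop:anchor-intersections-basic}, so no degenerate empty element enters \(\mathcal J_n\).
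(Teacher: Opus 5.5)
Your proof is correct and follows the paper's argument. Your recovery formula \(S=\{\mu : A_S\subseteq A_\mu\}\) is exactly the paper's inverse map \(\Psi\), and your surjectivity step, using \(A_{\operatorname{cl}_{\mathcal A}(S)}=A_S\), is the paper's check that \(A_{\Psi(X)}=X\). Your explicit verification that \(A_T\subseteq A_S\) forces \(S\subseteq T\) fills a step the paper leaves implicit: it records only the forward implication \(S\subseteq T\Rightarrow A_T\subseteq A_S\), and that alone does not give the anti-isomorphism \(\operatorname{Fix}(\operatorname{cl}_{\mathcal A})^{\mathrm{op}}\cong\mathcal J_n\).
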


\begin{proof}
For every \(X\in\mathcal J_n\), define
\[
\Psi(X):=\{\mu\in K_n^{(0)} : X\subseteq A_\mu\}.
\]
Since \(X=A_S\) for some simplex \(S\in\mathcal P_n\), Proposition~\ref{prop:anchor-closure-operator} implies that \(\Psi(X)\) is a simplex of \(K_n\), i.e. an element of \(\mathcal P_n\).

Now let \(S\in\mathcal P_n\). Then
\[
\Psi(A_S)
=
\{\mu : A_S\subseteq A_\mu\}
=
\operatorname{cl}_{\mathcal A}(S).
\]
In particular, if \(S\) is fixed by \(\operatorname{cl}_{\mathcal A}\), then
\[
\Psi(A_S)=S.
\]

Conversely, let \(X\in\mathcal J_n\), and write \(X=A_S\) for some simplex \(S\in\mathcal P_n\). Set
\[
T:=\Psi(X)=\operatorname{cl}_{\mathcal A}(S).
\]
Since \(S\subseteq T\), we have
\[
A_T\subseteq A_S=X.
\]
On the other hand, by definition of \(T\), every \(\mu\in T\) satisfies
\[
X\subseteq A_\mu.
\]
Therefore
\[
X\subseteq \bigcap_{\mu\in T} A_\mu = A_T.
\]
Hence
\[
A_T=X.
\]

So the two assignments
\[
S\longmapsto A_S,
\qquad
X\longmapsto \Psi(X)
\]
are inverse to one another between \(\operatorname{Fix}(\operatorname{cl}_{\mathcal A})\) and \(\mathcal J_n\).

Finally, if \(S\subseteq T\), then
\[
A_T\subseteq A_S,
\]
so the correspondence is order-reversing. This proves the claimed anti-isomorphism.
\end{proof}

\begin{theorem}\label{thm:Kn-equivalent-order-complex-Jn}
There is a homotopy equivalence
\[
K_n \simeq \Delta(\mathcal J_n).
\]
Consequently,
\[
N_n \simeq \Delta(\mathcal J_n).
\]
\end{theorem}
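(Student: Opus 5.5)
We will prove the equivalence \(N_n\simeq\Delta(\mathcal J_n)\) by the standard ``intersection poset'' form of the nerve theorem. Combined with Corollary~\ref{cor:Kn-nerve-equivalence}, this gives \(K_n\simeq\Delta(\mathcal J_n)\). The covering family is the anchor cover \(\mathcal A_n\) of \(N_n\). By Corollary~\ref{cor:anchor-cover-good} it is a good cover by subcomplexes: every nonempty finite intersection is a simplex of \(N_n\), hence contractible. For such a cover the version of the nerve theorem in \cite[Section~10.6]{Kozlov2008} (or Björner's nerve lemma for posets) states that the complex is homotopy equivalent to the order complex of the poset of nonempty intersections, ordered by inclusion. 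That poset is exactly \(\mathcal J_n\). Indeed, by Proposition~\ref{prop:anchor-intersections-basic}, a finite intersection \(A_{\lambda_0}\cap\dots\cap A_{\lambda_p}\) is nonempty precisely when \(\{\lambda_0,\dots,\lambda_p\}\in\mathcal P_n\), so the nonempty intersections are exactly the sets \(A_S\) with \(S\in\mathcal P_n\).

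If one prefers not to cite the poset version, I would argue directly with the quasi-fibration (Quillen fiber lemma) approach. Let \(F(N_n)\) be the face poset of \(N_n\), so that \(\Delta(F(N_n))\) is the barycentric subdivision of \(N_n\). Define an order-preserving map \(f:F(N_n)\to\mathcal J_n^{\mathrm{op}}\) by sending a simplex \(\sigma\) to the smallest member of \(\mathcal J_n\) containing it. That member is \(A_{T(\sigma)}\), where \(T(\sigma)=\bigcap_{U\in\sigma}U\), a nonempty simplex of \(K_n\) and hence an element of \(\mathcal P_n\); moreover \(\sigma\subseteq A_S\) if and only if \(S\subseteq T(\sigma)\). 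For fixed \(X\in\mathcal J_n\), the fiber \(f^{-1}(\mathcal J_{n,\ge X}^{\mathrm{op}})\) consists of the faces of \(N_n\) contained in \(X\). Since \(X\) is a simplex of \(N_n\), this fiber is the face poset of a simplex, which is contractible. Quillen's fiber lemma then gives \(\Delta(F(N_n))\simeq\Delta(\mathcal J_n^{\mathrm{op}})=\Delta(\mathcal J_n)\), because a poset and its opposite have the same order complex.

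The main point to check carefully is that \(f\) is well defined and order-preserving. The members of \(\mathcal J_n\) containing \(\sigma\) must have a unique minimum, and that minimum must itself lie in \(\mathcal J_n\). This follows because \(\mathcal J_n\) is closed under nonempty intersections: \(A_S\cap A_{S'}=A_{S\cup S'}\) whenever the latter is nonempty. Hence the minimum is \(A_{T(\sigma)}\), and enlarging \(\sigma\) shrinks \(T(\sigma)\), which enlarges \(A_{T(\sigma)}\). Finally, Proposition~\ref{prop:closed-simplices-equal-intersections} is not needed for the equivalence itself. It only lets us rewrite \(\Delta(\mathcal J_n)\) as \(\Delta(\operatorname{Fix}(\operatorname{cl}_{\mathcal A}))\), which is the form that Theorem~\ref{thm:anchor-intersection-classification} will later use to bound the dimension.
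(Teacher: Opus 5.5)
Your argument is correct, but it takes the mirror-image route to the paper's.

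\textbf{The paper's route.} The paper never uses the face poset of $N_n$. It applies the closure operator $\operatorname{cl}_{\mathcal A}$ to the face poset $\mathcal P_n$ of $K_n$. It then uses the standard fact that $\Delta(\mathcal P_n)$ deformation retracts onto $\Delta(\operatorname{Fix}(\operatorname{cl}_{\mathcal A}))$, and identifies $\operatorname{Fix}(\operatorname{cl}_{\mathcal A})^{\mathrm{op}}\cong\mathcal J_n$ via Proposition~\ref{prop:closed-simplices-equal-intersections}. This gives $K_n\simeq\Delta(\mathcal J_n)$ directly, and the statement for $N_n$ follows from Corollary~\ref{cor:Kn-nerve-equivalence}.

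\textbf{Your route.} You work on the $N_n$ side. Your map $\sigma\mapsto A_{T(\sigma)}$ is in fact the dual closure operator on the face poset of $N_n$: it is extensive and idempotent, and its image is exactly $\mathcal J_n$, since $f(A_S)=A_S$. The fiber lemma with cone fibers then gives $N_n\simeq\Delta(\mathcal J_n)$, and you need the nerve equivalence for $\mathcal C_n$ to reach $K_n$.

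\textbf{What each buys.} Your argument needs only that each $A_S$ is a simplex of $N_n$ (Proposition~\ref{prop:anchor-intersections-basic}). It avoids the closure-operator proposition and the anti-isomorphism $\operatorname{Fix}^{\mathrm{op}}\cong\mathcal J_n$ entirely. The paper's argument obtains the $K_n$ statement without invoking the nerve theorem for $\mathcal C_n$. Combined, the two arguments give $K_n\simeq\Delta(\mathcal J_n)\simeq N_n$ with no good-cover hypothesis at all, which is a Dowker-type duality for the relation $\lambda\in U$.

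\textbf{Two small corrections.}
\begin{enumerate}
\item As you note yourself, enlarging $\sigma$ enlarges $A_{T(\sigma)}$. So $f$ is order-preserving into $\mathcal J_n$ ordered by inclusion, not into $\mathcal J_n^{\mathrm{op}}$. The fiber you compute is $f^{-1}(\mathcal J_{n,\le X})$, which is the correct fiber for Quillen's lemma; simply drop the ``op''.
\item Your closing remark misdescribes the paper. Proposition~\ref{prop:closed-simplices-equal-intersections} is used precisely in the proof of this theorem. The later dimension bound (Theorem~\ref{thm:J_n-dimension-at-most-2}) is argued directly on $\mathcal J_n$, not on $\operatorname{Fix}(\operatorname{cl}_{\mathcal A})$.
\end{enumerate}
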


\begin{proof}
A standard result on closure operators on finite posets implies that the order complex of \(\mathcal P_n\) deformation retracts onto the order complex of the fixed part of \(\operatorname{cl}_{\mathcal A}\); see, for instance, \cite{Kozlov2008}. Therefore
\[
\Delta(\mathcal P_n)\simeq \Delta(\operatorname{Fix}(\operatorname{cl}_{\mathcal A})).
\]
Since \(\mathcal P_n\) is the poset of nonempty simplices of \(K_n\),
\[
\Delta(\mathcal P_n)=sd(K_n)\simeq K_n.
\]
By Proposition~\ref{prop:closed-simplices-equal-intersections}, the fixed part of \(\operatorname{cl}_{\mathcal A}\) is anti-isomorphic to \(\mathcal J_n\). Since a poset and its opposite have canonically isomorphic order complexes,
\[
\Delta(\operatorname{Fix}(\operatorname{cl}_{\mathcal A}))\cong \Delta(\mathcal J_n).
\]
Hence
\[
K_n \simeq \Delta(\mathcal J_n).
\]

Finally, by Corollary~\ref{cor:Kn-nerve-equivalence},
\[
K_n\simeq N_n.
\]
Therefore
\[
N_n \simeq \Delta(\mathcal J_n).
\]
\end{proof}

\subsection{Chain length in \texorpdfstring{\(\mathcal J_n\)}{J\_n}}

We now use Theorem~\ref{thm:anchor-intersection-classification} to control the dimension of \(\Delta(\mathcal J_n)\).

\begin{lemma}\label{lem:non-singleton-elements-Jn}
Every non-singleton element of \(\mathcal J_n\) is either:
\begin{enumerate}
    \item an anchor simplex \(A_\lambda\), or
    \item a pairwise overlap \(A_{\lambda,\mu}:=A_\lambda\cap A_\mu\).
\end{enumerate}
\end{lemma}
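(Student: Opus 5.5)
By definition, every element of \(\mathcal J_n\) has the form \(A_S\) for some nonempty simplex \(S\in\mathcal P_n\) of \(K_n\). The plan is to split according to the cardinality of \(S\) and read off the answer from Theorem~\ref{thm:anchor-intersection-classification}. No new combinatorics should be needed; the lemma is essentially a restatement of that theorem inside the poset \(\mathcal J_n\).

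There are three cases. If \(|S|=1\), say \(S=\{\lambda\}\), then part~(1) of Theorem~\ref{thm:anchor-intersection-classification} gives \(A_S=A_\lambda\), which is an anchor simplex. If \(|S|=2\), say \(S=\{\lambda,\mu\}\) with \(\lambda\sim\mu\), then \(A_S=A_\lambda\cap A_\mu=A_{\lambda,\mu}\) by definition, which is a pairwise overlap. If \(|S|\ge 3\), then part~(3) of the same theorem says that \(A_S\) is a single vertex of \(N_n\), so \(A_S\) is a singleton element of \(\mathcal J_n\).

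Now let \(X\in\mathcal J_n\) be a non-singleton element. Choose any representative simplex \(S\) with \(X=A_S\). By the third case, \(|S|\le 2\), so the first two cases show that \(X\) is either an anchor simplex or a pairwise overlap. The identification of equal intersections in \(\mathcal J_n\) causes no trouble. The conclusion holds for whatever representative is chosen: a singleton element may well also arise from some \(S\) with \(|S|\le 2\), but that does not affect the non-singleton claim.

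The only point that needs care is the meaning of \emph{non-singleton}, namely that \(X\) has at least two vertices as a simplex of \(N_n\). Under this reading the argument is simply the contrapositive of part~(3): any \(A_S\) with two or more vertices must come from \(|S|\le 2\). There is no real obstacle here beyond correctly invoking Theorem~\ref{thm:anchor-intersection-classification}.
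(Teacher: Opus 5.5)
Your proposal is correct and follows the paper's proof: write \(X=A_S\), use part~(3) of Theorem~\ref{thm:anchor-intersection-classification} to force \(|S|\le 2\) when \(X\) is non-singleton, and read off \(X=A_\lambda\) or \(X=A_{\lambda,\mu}\) according as \(|S|=1\) or \(|S|=2\). Your added observation that the conclusion does not depend on which representative \(S\) is chosen is a harmless extra precision that the paper leaves implicit.
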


\begin{proof}
Let \(X\in\mathcal J_n\) be non-singleton. Then \(X=A_S\) for some simplex \(S\subseteq K_n\). By Theorem~\ref{thm:anchor-intersection-classification}, if \(|S|\ge 3\), then \(A_S\) is a singleton, contradiction. Hence \(|S|\le 2\). If \(|S|=1\), then \(X=A_\lambda\); if \(|S|=2\), then \(X=A_{\lambda,\mu}\).
\end{proof}

\begin{lemma}\label{lem:inside-anchor}
Let \(X\in\mathcal J_n\) be non-singleton, and suppose
\[
X\subseteq A_\lambda.
\]
Then either
\[
X=A_\lambda,
\]
or
\[
X=A_{\lambda,\mu}
\]
for some neighbor \(\mu\sim\lambda\).
\end{lemma}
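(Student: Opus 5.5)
The plan is to reduce to the two shapes allowed by Lemma~\ref{lem:non-singleton-elements-Jn}. Then the inclusion \(X\subseteq A_\lambda\) lets us rewrite \(X\) as an anchor intersection over a larger simplex. Finally, the rigidity statement Theorem~\ref{thm:anchor-intersection-classification}(3) rules out the unwanted cases. Throughout I use that every element of \(\mathcal J_n\) is nonempty: it has the form \(A_S\) with \(S\) a simplex of \(K_n\), and Proposition~\ref{prop:anchor-intersections-basic} makes \(A_S\neq\varnothing\).

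Let \(X\in\mathcal J_n\) be non-singleton with \(X\subseteq A_\lambda\). By Lemma~\ref{lem:non-singleton-elements-Jn}, either \(X=A_\kappa\) for some partition \(\kappa\), or \(X=A_{\kappa,\nu}=A_\kappa\cap A_\nu\) for some edge \(\{\kappa,\nu\}\) of \(K_n\). The key observation in both cases is that \(X\subseteq A_\lambda\) gives \(X=X\cap A_\lambda\). Hence \(X=A_{S\cup\{\lambda\}}\), where \(S\) is the indexing simplex. Since \(X\neq\varnothing\), Proposition~\ref{prop:anchor-intersections-basic} shows that \(S\cup\{\lambda\}\) is a simplex of \(K_n\).

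\emph{First case, \(X=A_\kappa\).} If \(\kappa=\lambda\), we are in the first alternative. Otherwise \(X=A_\kappa\cap A_\lambda=A_{\lambda,\kappa}\), and \(\{\kappa,\lambda\}\) is a simplex of \(K_n\), so \(\kappa\sim\lambda\). This is the second alternative with \(\mu=\kappa\).

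\emph{Second case, \(X=A_{\kappa,\nu}\).} Here \(X=A_{\{\kappa,\nu,\lambda\}}\). If \(\lambda\notin\{\kappa,\nu\}\), then \(\{\kappa,\nu,\lambda\}\) is a simplex of \(K_n\) with three vertices. Theorem~\ref{thm:anchor-intersection-classification}(3) then says \(X\) is a single vertex of \(N_n\), contradicting the assumption that \(X\) is non-singleton. Hence \(\lambda\in\{\kappa,\nu\}\). Taking \(\mu\) to be the other vertex gives \(X=A_{\lambda,\mu}\) with \(\mu\sim\lambda\), because \(\{\kappa,\nu\}\) is an edge.

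I expect no serious obstacle, since all the substantive work sits in Theorem~\ref{thm:anchor-intersection-classification}(3). The one point needing care is justifying that the enlarged set \(S\cup\{\lambda\}\) is genuinely a simplex of \(K_n\), so that part (3) applies. This is exactly where nonemptiness of \(X\) and Proposition~\ref{prop:anchor-intersections-basic} are used.
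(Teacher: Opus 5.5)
Your proof is correct and follows the paper's argument: both rewrite \(X=A_{S\cup\{\lambda\}}\) and use Theorem~\ref{thm:anchor-intersection-classification}(3) to force \(|S\cup\{\lambda\}|\le 2\). The paper applies this directly to an arbitrary indexing simplex \(S\), without your preliminary case split via Lemma~\ref{lem:non-singleton-elements-Jn}, and it leaves implicit the check that \(S\cup\{\lambda\}\) is a simplex of \(K_n\), which you make explicit through Proposition~\ref{prop:anchor-intersections-basic}.
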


\begin{proof}
Write \(X=A_S\) for some simplex \(S\subseteq K_n\). Since \(X\subseteq A_\lambda\),
\[
X=A_S\cap A_\lambda = A_{S\cup\{\lambda\}}.
\]
Because \(X\) is non-singleton, Theorem~\ref{thm:anchor-intersection-classification} implies that \(|S\cup\{\lambda\}|\le 2\). Hence either \(S\cup\{\lambda\}=\{\lambda\}\), giving \(X=A_\lambda\), or \(S\cup\{\lambda\}=\{\lambda,\mu\}\), giving \(X=A_{\lambda,\mu}\).
\end{proof}

\begin{lemma}\label{lem:inside-pairwise-overlap}
Let \(X\in\mathcal J_n\) be non-singleton, and suppose
\[
X\subseteq A_{\lambda,\mu}:=A_\lambda\cap A_\mu.
\]
Then
\[
X=A_{\lambda,\mu}.
\]
\end{lemma}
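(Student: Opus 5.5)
The argument follows the pattern of Lemma~\ref{lem:inside-anchor}: absorb the constraint into the indexing simplex and then apply Theorem~\ref{thm:anchor-intersection-classification}. Write \(X=A_S\) for some simplex \(S\in\mathcal P_n\). Since \(X\subseteq A_\lambda\) and \(X\subseteq A_\mu\), we get
\[
X=A_S\cap A_\lambda\cap A_\mu=A_{S\cup\{\lambda,\mu\}}.
\]
The set \(T:=S\cup\{\lambda,\mu\}\) is a simplex of \(K_n\). Indeed, \(A_T=X\) is nonempty, so Proposition~\ref{prop:anchor-intersections-basic} applies.

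Next we pin down the size of \(T\). We have \(|T|\ge 2\) because \(A_{\lambda,\mu}\) is understood with \(\lambda\neq\mu\) adjacent, as in the notation introduced in Lemma~\ref{lem:non-singleton-elements-Jn}. If \(|T|\ge 3\), part~(3) of Theorem~\ref{thm:anchor-intersection-classification} says \(A_T\) is a single vertex of \(N_n\). That contradicts the hypothesis that \(X\) is non-singleton. Hence \(|T|=2\), so \(T=\{\lambda,\mu\}\) and \(S\subseteq\{\lambda,\mu\}\).

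Therefore \(X=A_T=A_\lambda\cap A_\mu=A_{\lambda,\mu}\), as claimed.

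The only point needing care is the degenerate case \(\lambda=\mu\). There \(A_{\lambda,\mu}=A_\lambda\), the statement becomes Lemma~\ref{lem:inside-anchor}, and it would not hold literally. I would therefore state explicitly that \(\lambda\sim\mu\) are distinct, matching the usage of \(A_{\lambda,\mu}\) throughout this subsection. Beyond that there is no real obstacle: all the content sits in Theorem~\ref{thm:anchor-intersection-classification}(3).
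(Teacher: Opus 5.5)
Your proof is correct and is essentially the paper's own argument. You absorb \(\lambda,\mu\) into the indexing simplex to get \(X=A_{S\cup\{\lambda,\mu\}}\), then use part~(3) of Theorem~\ref{thm:anchor-intersection-classification} to force \(S\cup\{\lambda,\mu\}=\{\lambda,\mu\}\). Your two extra checks are right and are left implicit in the paper: that \(S\cup\{\lambda,\mu\}\) is a simplex because \(X\neq\varnothing\) (Proposition~\ref{prop:anchor-intersections-basic}), and that \(\lambda\neq\mu\) must be assumed.
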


\begin{proof}
Write \(X=A_S\) for some simplex \(S\subseteq K_n\). Since \(X\subseteq A_\lambda\cap A_\mu\),
\[
X=A_S\cap A_\lambda\cap A_\mu = A_{S\cup\{\lambda,\mu\}}.
\]
Again \(X\) is non-singleton, so by Theorem~\ref{thm:anchor-intersection-classification} the simplex \(S\cup\{\lambda,\mu\}\) has size at most \(2\). But it already contains \(\{\lambda,\mu\}\), hence
\[
S\cup\{\lambda,\mu\}=\{\lambda,\mu\},
\]
and therefore
\[
X=A_{\lambda,\mu}.
\]
\end{proof}

\begin{theorem}\label{thm:J_n-dimension-at-most-2}
Every strict inclusion chain in \(\mathcal J_n\) has length at most \(2\). Equivalently,
\[
\dim \Delta(\mathcal J_n)\le 2.
\]
\end{theorem}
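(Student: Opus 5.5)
Take a strict chain $X_0\subsetneq X_1\subsetneq\cdots\subsetneq X_r$ in $\mathcal J_n$ and show $r\le 2$. The order complex $\Delta(\mathcal J_n)$ has dimension equal to the maximal such $r$, so this gives both formulations of Theorem~\ref{thm:J_n-dimension-at-most-2}.

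The first step uses the strict inclusions. Every element of the chain except possibly $X_0$ strictly contains some nonempty element of $\mathcal J_n$. Elements of $\mathcal J_n$ are nonempty, since each $A_S$ with $S\in\mathcal P_n$ is nonempty by Proposition~\ref{prop:anchor-intersections-basic}. Hence $X_1,\dots,X_r$ each have at least two vertices and are non-singletons. So at most the bottom element $X_0$ can be a singleton.

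Next I analyse the non-singleton part $X_1\subsetneq\cdots\subsetneq X_r$ using Lemmas~\ref{lem:non-singleton-elements-Jn}, \ref{lem:inside-anchor} and \ref{lem:inside-pairwise-overlap}. By Lemma~\ref{lem:non-singleton-elements-Jn}, the top element $X_r$ is either an anchor simplex $A_\lambda$ or a pairwise overlap $A_{\lambda,\mu}$.
\begin{itemize}
\item If $X_r=A_{\lambda,\mu}$, then every non-singleton $X_i\subseteq X_r$ equals $A_{\lambda,\mu}$ by Lemma~\ref{lem:inside-pairwise-overlap}. Strictness then forces $r=1$.
\item If $X_r=A_\lambda$, then by Lemma~\ref{lem:inside-anchor} each non-singleton $X_i$ with $i<r$ equals $A_{\lambda,\mu}$ for some neighbour $\mu$. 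It cannot equal $A_\lambda$, because the chain is strict. Two consecutive such elements $A_{\lambda,\mu}\subsetneq A_{\lambda,\mu'}$ are impossible, again by Lemma~\ref{lem:inside-pairwise-overlap} applied to $A_{\lambda,\mu'}$. So at most one element lies strictly between, giving $r-1\le 1$.
\end{itemize}
In either case the non-singleton part of the chain has length at most $1$. Adding the possible singleton $X_0$ gives total length $r\le 2$.

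The step needing most care is the bookkeeping around singletons and empty sets. I must confirm that no element of $\mathcal J_n$ is empty, so that a singleton can only sit at the bottom of a chain. I must also confirm that Lemmas~\ref{lem:inside-anchor} and \ref{lem:inside-pairwise-overlap} are applied only to non-singleton $X$, as their hypotheses require. Both points come from the observation above that every element strictly above $X_0$ has at least two vertices. No further combinatorics of partitions is needed beyond Theorem~\ref{thm:anchor-intersection-classification}, which those lemmas already encode.
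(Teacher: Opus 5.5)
Your proposal is correct and follows essentially the same route as the paper. You classify the top element \(X_r\) as \(A_\lambda\) or \(A_{\lambda,\mu}\) via Lemma~\ref{lem:non-singleton-elements-Jn}, then use Lemmas~\ref{lem:inside-anchor} and \ref{lem:inside-pairwise-overlap} to show that the longest chain is \(\{U\}\subsetneq A_{\lambda,\mu}\subsetneq A_\lambda\); your remark that every element of \(\mathcal J_n\) is nonempty, so only \(X_0\) can be a singleton, makes explicit a step the paper leaves implicit.
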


\begin{proof}
Take a strict inclusion chain
\[
X_0 \subsetneq X_1 \subsetneq \cdots \subsetneq X_r
\qquad\text{in }\mathcal J_n.
\]

If \(X_r\) is a singleton, then \(r=0\).

Assume now that \(X_r\) is non-singleton. By Lemma~\ref{lem:non-singleton-elements-Jn}, it is either an anchor simplex \(A_\lambda\) or a pairwise overlap \(A_{\lambda,\mu}\).

If
\[
X_r=A_{\lambda,\mu},
\]
then by Lemma~\ref{lem:inside-pairwise-overlap}, every non-singleton element of \(\mathcal J_n\) contained in \(X_r\) must equal \(X_r\). Hence any element strictly below \(X_r\) is a singleton. So the chain has length at most \(1\):
\[
\{U\} \subsetneq A_{\lambda,\mu}.
\]

If
\[
X_r=A_\lambda,
\]
then by Lemma~\ref{lem:inside-anchor}, every proper non-singleton element below \(A_\lambda\) is of the form \(A_{\lambda,\mu}\). By Lemma~\ref{lem:inside-pairwise-overlap}, nothing non-singleton lies properly below such a pairwise overlap. Therefore the longest possible chain is
\[
\{U\} \subsetneq A_{\lambda,\mu} \subsetneq A_\lambda,
\]
which has length \(2\).

Thus in all cases
\[
r\le 2.
\]
Hence
\[
\dim \Delta(\mathcal J_n)\le 2.
\]
\end{proof}

\subsection{A \texorpdfstring{\(2\)}{2}-dimensional model}

We can now complete the reduction step needed for the main theorem.

\begin{theorem}\label{thm:Nn-has-2-dimensional-model}
The nerve \(N_n\) has the homotopy type of a CW-complex of dimension at most \(2\).
\end{theorem}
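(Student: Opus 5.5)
This follows by chaining the two results just proved. By Theorem~\ref{thm:Kn-equivalent-order-complex-Jn}, we have
\[
N_n\simeq K_n\simeq \Delta(\mathcal J_n).
\]
By Theorem~\ref{thm:J_n-dimension-at-most-2}, every strict chain in \(\mathcal J_n\) has length at most \(2\). Hence the order complex \(\Delta(\mathcal J_n)\) is a finite simplicial complex whose simplices have at most three vertices, so \(\dim\Delta(\mathcal J_n)\le 2\). A finite simplicial complex of dimension at most \(2\) is a CW-complex of dimension at most \(2\), with one cell per simplex. This gives the claim for \(N_n\), and via Corollary~\ref{cor:Kn-nerve-equivalence} also for \(K_n\).

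The steps in order are as follows.
\begin{enumerate}
\item Recall that \(\mathcal J_n\) is a finite poset, since \(\Par(n)\) is finite and hence so are \(\mathcal C_n\), \(N_n\) and the set of intersections \(A_S\). This makes \(\Delta(\mathcal J_n)\) a finite simplicial complex.
\item Invoke the chain bound to get \(\dim\Delta(\mathcal J_n)\le 2\).
\item Take the geometric realization with its standard CW structure.
\item Compose this with the homotopy equivalence \(N_n\simeq\Delta(\mathcal J_n)\).
\end{enumerate}

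All of the work sits in the cited results, and I would double-check two of their inputs before relying on them. The first is the closure-operator retraction \(\Delta(\mathcal P_n)\simeq\Delta(\operatorname{Fix}(\operatorname{cl}_{\mathcal A}))\). This needs \(\operatorname{cl}_{\mathcal A}\) to map \(\mathcal P_n\) into \(\mathcal P_n\). Proposition~\ref{prop:anchor-closure-operator} only shows pairwise adjacency of elements of \(\operatorname{cl}_{\mathcal A}(S)\), but that suffices because \(K_n\) is a clique complex. The second, and the main point of concern, is case (3) of Theorem~\ref{thm:anchor-intersection-classification}, which says that triple intersections are singletons. It relies on choosing the minimal-height vertex \(\lambda\) of \(S\) and on \(S\) containing two vertices of one star or top fiber at \(\lambda\). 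Since \(|S|\ge 3\) and \(S\) is star- or top-type relative to any of its vertices, this holds. If that case were doubtful, I would instead bound chain lengths directly from Corollary~\ref{cor:vertices-of-anchor-simplex}. Granting these, the proof of the proposition is a two-line composition.
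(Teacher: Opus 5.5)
Your proposal is correct and is exactly the paper's argument: Theorem~\ref{thm:Kn-equivalent-order-complex-Jn} gives $N_n\simeq\Delta(\mathcal J_n)$, Theorem~\ref{thm:J_n-dimension-at-most-2} gives $\dim\Delta(\mathcal J_n)\le 2$, and a finite simplicial complex is a CW-complex of the same dimension. Your two side checks are also sound: pairwise adjacency suffices in a clique complex, and any clique with at least three vertices has two vertices in a single star or top fiber at any of its vertices, so the minimal-height choice is inessential.
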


\begin{proof}
By Theorem~\ref{thm:Kn-equivalent-order-complex-Jn},
\[
N_n \simeq \Delta(\mathcal J_n).
\]
By Theorem~\ref{thm:J_n-dimension-at-most-2},
\[
\dim \Delta(\mathcal J_n)\le 2.
\]
Therefore \(N_n\) has the homotopy type of a simplicial complex, and hence of a CW-complex, of dimension at most \(2\).
\end{proof}

\begin{corollary}\label{cor:Kn-has-2-dimensional-model}
The clique complex \(K_n\) has the homotopy type of a CW-complex of dimension at most \(2\).
\end{corollary}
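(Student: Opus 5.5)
The statement follows by combining two results already established. Corollary~\ref{cor:Kn-nerve-equivalence} gives a homotopy equivalence
\[
K_n\simeq N_n .
\]
Theorem~\ref{thm:Nn-has-2-dimensional-model} says that \(N_n\) has the homotopy type of a CW-complex of dimension at most \(2\); concretely, that complex is \(\Delta(\mathcal J_n)\).

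The plan is to compose these equivalences:
\[
K_n\simeq N_n\simeq \Delta(\mathcal J_n).
\]
Alternatively, Theorem~\ref{thm:Kn-equivalent-order-complex-Jn} gives \(K_n\simeq\Delta(\mathcal J_n)\) directly. We then invoke Theorem~\ref{thm:J_n-dimension-at-most-2}, which bounds \(\dim\Delta(\mathcal J_n)\le 2\).

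Since \(\Delta(\mathcal J_n)\) is a finite simplicial complex, its geometric realization is a finite CW-complex. Its cells are the simplices, so every cell has dimension at most \(2\). Homotopy equivalence is transitive, so \(K_n\) has the homotopy type of this CW-complex of dimension at most \(2\).

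There is no real obstacle at this stage. All the substantive work, namely the rigidity statement Theorem~\ref{thm:anchor-intersection-classification} and the chain-length bound, has already been carried out. The only point to check is that both equivalences are genuine homotopy equivalences of spaces and not merely, say, homology isomorphisms, so that they compose. The nerve theorem and the closure-operator retraction \(\Delta(\mathcal P_n)\simeq\Delta(\operatorname{Fix}(\operatorname{cl}_{\mathcal A}))\) both supply honest homotopy equivalences, so this check is satisfied.
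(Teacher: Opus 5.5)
Your proposal is correct and is exactly the paper's argument: the paper composes \(K_n\simeq N_n\) (Corollary~\ref{cor:Kn-nerve-equivalence}) with Theorem~\ref{thm:Nn-has-2-dimensional-model}, which in turn rests on \(N_n\simeq\Delta(\mathcal J_n)\) and \(\dim\Delta(\mathcal J_n)\le 2\). Your alternative of citing Theorem~\ref{thm:Kn-equivalent-order-complex-Jn} directly is also valid and slightly more economical, since that theorem already gives \(K_n\simeq\Delta(\mathcal J_n)\) before the paper passes through \(N_n\).
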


\begin{proof}
By Corollary~\ref{cor:Kn-nerve-equivalence},
\[
K_n\simeq N_n.
\]
Now apply Theorem~\ref{thm:Nn-has-2-dimensional-model}.
\end{proof}

\begin{corollary}\label{cor:high-homology-vanishes}
For every \(k\ge 3\),
\[
\widetilde H_k(K_n)=0.
\]
\end{corollary}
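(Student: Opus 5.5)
The plan is to deduce the vanishing directly from the homotopy-invariance of reduced singular homology together with the dimension bound just established. By Corollary~\ref{cor:Kn-has-2-dimensional-model}, there is a CW-complex \(X\) with \(\dim X\le 2\) and a homotopy equivalence \(K_n\simeq X\). Concretely, Theorem~\ref{thm:Kn-equivalent-order-complex-Jn} lets us take \(X=\Delta(\mathcal J_n)\), and Theorem~\ref{thm:J_n-dimension-at-most-2} shows this is a finite simplicial complex of dimension at most \(2\). Homotopy equivalences induce isomorphisms on reduced homology, so \(\widetilde H_k(K_n)\cong \widetilde H_k(\Delta(\mathcal J_n))\) for all \(k\).

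It then remains to show that a simplicial (or CW) complex of dimension at most \(2\) has vanishing homology in degrees \(k\ge 3\). For a simplicial complex I would compute using the simplicial chain complex. Its chain groups \(C_k(\Delta(\mathcal J_n))\) are free abelian on the \(k\)-simplices, so they are zero for \(k\ge 3\) because there are no simplices of dimension \(\ge 3\). Hence \(H_k=\ker\partial_k/\operatorname{im}\partial_{k+1}=0\) for \(k\ge 3\). Since \(k\ge 3>0\), reduced and unreduced homology agree in these degrees. For a general CW model one would argue the same way with cellular homology.

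There is no serious obstacle here; the only point needing care is that the earlier equivalences (nerve theorem, closure-operator retraction, barycentric subdivision) are genuine homotopy equivalences, not merely homology isomorphisms. This is exactly what Theorem~\ref{thm:Kn-equivalent-order-complex-Jn} provides, so the corollary follows immediately.
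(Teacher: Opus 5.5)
Your argument is correct and is essentially the paper's proof. The paper likewise combines Corollary~\ref{cor:Kn-has-2-dimensional-model} with homotopy invariance of homology and the vanishing of homology above the dimension of a CW-complex; you simply make that last fact explicit through the simplicial chain complex of \(\Delta(\mathcal J_n)\), and your closing caveat asks for more than is needed, since any chain of homology isomorphisms would already suffice here.
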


\begin{proof}
A CW-complex of dimension at most \(2\) has trivial reduced homology in all degrees \(k\ge 3\). Apply Corollary~\ref{cor:Kn-has-2-dimensional-model} and homotopy invariance of homology.
\end{proof}

\subsection{Summary}

We have replaced the direct collapse argument by a two-step reduction. First, the anchor cover \(\mathcal A_n\) of the nerve \(N_n\) gives rise to an intersection poset \(\mathcal J_n\) such that
\[
N_n \simeq \Delta(\mathcal J_n).
\]
Second, the classification of the nonempty intersections of anchor simplices shows that every strict chain in \(\mathcal J_n\) has length at most \(2\). Hence
\[
\dim \Delta(\mathcal J_n)\le 2,
\]
and therefore both \(N_n\) and \(K_n\) have the homotopy type of CW-complexes of dimension at most \(2\).

\section{Connectedness and simple connectedness}
\label{sec:connectedness-and-simple-connectedness}

In the previous section we proved that the nerve \(N_n\) of the canonical cover \(\mathcal C_n\) has the homotopy type of a CW-complex of dimension at most \(2\). Since \(K_n\simeq N_n\), the same holds for \(K_n\). We now supply the complementary fundamental-group statement by proving that \(K_n\) is connected and simply connected.

\subsection{Connectedness}

We begin by proving connectedness.

\begin{proposition}\label{prop:Kn-connected}
The graph \(G_n\), and hence the clique complex \(K_n\), is connected.
\end{proposition}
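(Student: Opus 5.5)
Since the $1$-skeleton of $K_n$ is exactly $G_n$, it suffices to show that $G_n$ is connected; connectedness of $K_n$ then follows. The plan is to join every partition $\lambda\vdash n$ by an edge-path in $G_n$ to the single-row partition $(n)$. For the path we use the height function $h$ from Lemma~\ref{lem:height-change}, which is minimized at $(n)$ with value $h((n))=n$.

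\emph{Descent step.} Let $\lambda\neq(n)$. Then $\lambda$ has at least two rows, and we look for an admissible transfer that lowers the height. Take $c$ to be the removable corner at the end of the last row, so $r(c)=\ell\ge 2$. Take $a$ to be the addable corner at the end of the first row, that is, the position $(1,\lambda_1+1)$, so $r(a)=1$. Then $r(a)\neq r(c)$.

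\emph{Admissibility.} Moving the cell produces the row lengths $\lambda_1+1$ and $\lambda_\ell-1$ in place of $\lambda_1$ and $\lambda_\ell$, with all other rows unchanged. After reordering, this is a partition of $n$. It differs from $\lambda$ because its largest part is $\lambda_1+1$.

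\emph{Conclusion of the step.} By Lemma~\ref{lem:height-change},
\[
h(\lambda(c\to a))-h(\lambda)=r(a)-r(c)=1-\ell<0 .
\]
So every $\lambda\neq(n)$ has a neighbor of strictly smaller height.

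\emph{Termination.} The function $h$ takes positive integer values on the finite set $\Par(n)$. Iterating the descent step therefore produces a path that must stop, and by the step above it can only stop at $(n)$. Hence every vertex lies in the component of $(n)$, and $G_n$ and $K_n$ are connected.

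None of these steps is a real obstacle. The only point needing care is to check that the transfer is admissible after reordering, in particular that the result differs from $\lambda$. This is guaranteed because the largest part increases.
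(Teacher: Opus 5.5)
Your proof is correct and follows the paper's own argument: the paper likewise repeatedly transfers a cell from the last nonzero row to the first row until it reaches \((n)\). Your admissibility check and your use of Lemma~\ref{lem:height-change} to guarantee termination simply make explicit what the paper leaves implicit in the word ``repeatedly''.
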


\begin{proof}
It suffices to show that \(G_n\) is connected. Let
\[
\lambda=(\lambda_1,\dots,\lambda_\ell)\vdash n.
\]
If \(\lambda=(n)\), the claim is immediate. Otherwise, \(\lambda\) has at least two nonzero parts. By repeatedly transferring one cell from the last nonzero row to the first row, and reordering after each step, we obtain a path in \(G_n\) from \(\lambda\) to the one-row partition \((n)\). Hence every vertex of \(G_n\) is connected to \((n)\), so \(G_n\) is connected. Therefore \(K_n\) is connected as well.
\end{proof}

\subsection{The height function}

Recall from Section~\ref{sec:partitions-moves-canonical-simplices} the height function
\[
h(\lambda):=\sum_{i=1}^{\ell} i\,\lambda_i.
\]

We shall use this function to simplify edge-loops in the \(1\)-skeleton of \(K_n\), which is precisely the graph \(G_n\).

\begin{lemma}\label{lem:height-change-recalled}
If
\[
\mu=\lambda(c\to a),
\]
then
\[
h(\mu)-h(\lambda)=r(a)-r(c).
\]
In particular, if \(r(a)<r(c)\), then
\[
h(\mu)<h(\lambda).
\]
\end{lemma}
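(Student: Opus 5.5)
This statement repeats Lemma~\ref{lem:height-change}, so the plan is simply to redo that computation in conjugate coordinates; alternatively one could cite Lemma~\ref{lem:height-change} directly.

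First I will pass to conjugate partitions and use the column form of the height,
\[
h(\lambda)=\sum_{j\ge1}\binom{\lambda'_j+1}{2},
\]
which holds because the cell in row \(i\) of any column contributes \(i\) to \(h(\lambda)=\sum_i i\lambda_i\). Column \(j\) therefore contributes \(1+2+\cdots+\lambda'_j\).

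Next, by Lemma~\ref{lem:adjacency-conjugate-criterion}, the transfer \(\mu=\lambda(c\to a)\) satisfies
\[
\mu'=\lambda'-e_{\operatorname{col}(c)}+e_{\operatorname{col}(a)}.
\]
Only two columns change, and their indices differ because \(c\) and \(a\) lie in different positions of different columns. The column of \(a\) gains the new top-row-index term \(\lambda'_{\operatorname{col}(a)}+1\). The column of \(c\) loses its last term \(\lambda'_{\operatorname{col}(c)}\). Hence
\[
h(\mu)-h(\lambda)=\bigl(\lambda'_{\operatorname{col}(a)}+1\bigr)-\lambda'_{\operatorname{col}(c)}.
\]

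Finally, I substitute the row identities \(r(a)=\lambda'_{\operatorname{col}(a)}+1\) and \(r(c)=\lambda'_{\operatorname{col}(c)}\) to get \(h(\mu)-h(\lambda)=r(a)-r(c)\). The inequality clause follows at once.

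The only point needing care is to check that \(\operatorname{col}(c)\neq\operatorname{col}(a)\), so that the two column changes do not interfere. If the columns were equal, then \(r(a)=r(c)+1\) and the transfer would merely move the cell down within its column. That is impossible, because the cell at \(a\) would sit directly below the removed cell \(c\) and the result would not be a Ferrers diagram. In any case, the conjugate formula from Lemma~\ref{lem:adjacency-conjugate-criterion} already encodes the move correctly, so I do not expect any real obstacle.
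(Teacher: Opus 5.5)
Your proposal is correct and matches the paper: the paper proves this by citing Lemma~\ref{lem:height-change}, whose proof is the same column-by-column computation. That computation uses \(h(\lambda)=\sum_j \lambda'_j(\lambda'_j+1)/2\), \(\mu'=\lambda'-e_{\operatorname{col}(c)}+e_{\operatorname{col}(a)}\), and the identities \(r(a)=\lambda'_{\operatorname{col}(a)}+1\), \(r(c)=\lambda'_{\operatorname{col}(c)}\). Your check that \(\operatorname{col}(c)\neq\operatorname{col}(a)\) is left implicit in the paper, but the Ferrers condition is not the right reason since the definition reorders rows; the reason is that equal columns would give \(\mu'=\lambda'\), i.e.\ \(\mu=\lambda\), contradicting admissibility.
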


\begin{proof}
This is Lemma~\ref{lem:height-change}.
\end{proof}

A key consequence is that adjacent vertices always have different heights.

\begin{lemma}\label{lem:adjacent-heights-different-recalled}
If \(\lambda\) and \(\mu\) are distinct adjacent vertices of \(G_n\), then
\[
h(\lambda)\neq h(\mu).
\]
\end{lemma}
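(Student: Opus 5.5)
This is a restatement of Lemma~\ref{lem:adjacent-different-heights}, so the plan is to reduce it directly to Lemma~\ref{lem:height-change-recalled}. Since \(\lambda\) and \(\mu\) are distinct and adjacent in \(G_n\), the definition of the partition graph lets us write \(\mu=\lambda(c\to a)\) for a removable corner \(c\) and an addable corner \(a\) of \(\lambda\), with the transfer admissible.

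The only point needing care is that \(r(a)\neq r(c)\). This holds because the definition of an admissible transfer explicitly requires it. One can also see it from the formulas \(r(c)=\lambda'_{\operatorname{col}(c)}\) and \(r(a)=\lambda'_{\operatorname{col}(a)}+1\).

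Lemma~\ref{lem:height-change-recalled} then gives
\[
h(\mu)-h(\lambda)=r(a)-r(c)\neq 0,
\]
so \(h(\lambda)\neq h(\mu)\).

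None of these steps is a real obstacle. The argument is just a matter of citing the definition of admissibility and the earlier height computation.
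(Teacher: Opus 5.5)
Your proof is correct and is essentially the paper's own argument: the paper proves Lemma~\ref{lem:adjacent-different-heights} by writing \(\mu=\lambda(c\to a)\), noting that \(r(a)\neq r(c)\) because the transfer is nontrivial, and applying Lemma~\ref{lem:height-change}. One small caveat is that your aside does not hold on its own: the formulas \(r(c)=\lambda'_{\operatorname{col}(c)}\) and \(r(a)=\lambda'_{\operatorname{col}(a)}+1\) do not by themselves give \(r(a)\neq r(c)\) (for \(\lambda=(2,1)\), the corners \((1,2)\) and \((1,3)\) satisfy both with \(r(c)=r(a)=1\)), so the inequality really comes from the definition of admissibility, which is what your main argument correctly uses.
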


\begin{proof}
This is Lemma~\ref{lem:adjacent-different-heights}.
\end{proof}

\subsection{Loop complexity}

Let
\[
L=(\lambda_0,\lambda_1,\dots,\lambda_m=\lambda_0)
\]
be an edge-loop in \(K_n\). Define
\[
H(L):=\max_i h(\lambda_i),
\]
and
\[
M(L):=\#\{\,i:h(\lambda_i)=H(L)\,\}.
\]
We order such pairs lexicographically:
\[
(H',M')<(H,M)
\]
if either \(H'<H\), or \(H'=H\) and \(M'<M\).

\subsection{Local peak reduction}

We now recall the local combinatorial lemma established earlier.

\begin{lemma}\label{lem:local-peak-reduction-final}
Let
\[
\mu_1-\lambda-\mu_2
\]
be a length-two fragment of an edge-loop in \(K_n\) such that
\[
h(\lambda)>h(\mu_1),\qquad h(\lambda)>h(\mu_2).
\]
Then this fragment is homotopic in \(K_n\), relative to its endpoints, to an edge-path with the same endpoints and all intermediate vertices of height strictly smaller than \(h(\lambda)\).
\end{lemma}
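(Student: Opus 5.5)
The plan is to fill the fragment \(\mu_1-\lambda-\mu_2\) with at most two triangles of \(K_n\) that share a new middle vertex \(\rho\) of height below \(h(\lambda)\). Write
\[
\mu_i=\lambda(c_i\to a_i),\qquad k_i:=k(c_i),\qquad \ell_i:=\ell(a_i),
\]
so that \(\mu_i'=\lambda'-e_{k_i}+e_{\ell_i}\). By Lemma~\ref{lem:height-change-recalled}, the hypothesis \(h(\mu_i)<h(\lambda)\) says \(\delta_i:=r(a_i)-r(c_i)<0\) for \(i=1,2\).

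\emph{Trivial cases.} If \(\mu_1=\mu_2\), the fragment is a backtrack and is null-homotopic relative to its endpoints. The replacement path is the constant path at \(\mu_1\), which has no intermediate vertices. If \(c_1=c_2\) or \(a_1=a_2\), then Theorem~\ref{thm:triangle-classification} shows that \(\{\lambda,\mu_1,\mu_2\}\) is a triangle of \(K_n\). The fragment is then homotopic across this \(2\)-simplex to the single edge \(\mu_1-\mu_2\), which again has no intermediate vertices.

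\emph{Mixed case.} Now assume \(c_1\neq c_2\) and \(a_1\neq a_2\); then \(\mu_1\not\sim\mu_2\) by Lemma~\ref{lem:forbidden-mixed-triangle}. I would use a crossed transfer, either \(\rho=\lambda(c_1\to a_2)\) or \(\rho=\lambda(c_2\to a_1)\).

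Suppose \(\rho=\lambda(c_1\to a_2)\) is admissible. Then \(\{\lambda,\mu_1,\rho\}\) is a star-triangle with support \(c_1\) (Lemma~\ref{lem:star-triangle-criterion}). Also \(\{\lambda,\rho,\mu_2\}\) is a top-triangle with support \(a_2\) (Lemma~\ref{lem:top-triangle-criterion}). Sliding across these two \(2\)-simplices replaces \(\mu_1-\lambda-\mu_2\) by \(\mu_1-\rho-\mu_2\) relative to the endpoints.

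So it remains to pick a crossed transfer that is admissible and strictly lowers height. The two crossed height changes are
\[
r(a_2)-r(c_1),\qquad r(a_1)-r(c_2),
\]
and their sum is \(\delta_1+\delta_2\le -2\). Hence at least one of them is negative, and whenever one crossed transfer must be discarded, the other one has change at most \(-2-(\text{discarded change})\).

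\emph{Admissibility (the main obstacle).} A crossed transfer \(\lambda(c_i\to a_j)\) with \(i\neq j\) is a genuine move of one cell. It fails to be admissible exactly when it is trivial, i.e. when \(r(a_j)=r(c_i)\) or when the conjugate change vanishes, i.e. \(k_i=\ell_j\). I would handle these degeneracies as follows.

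If \(r(a_2)=r(c_1)\), the other crossed change equals \(\delta_1+\delta_2<0\). It is also nonzero, so the corresponding rows differ.

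If \(k_1=\ell_2\), then \(r(a_2)=\lambda'_{\ell_2}+1=r(c_1)+1\), so the discarded change is \(+1\). The other crossed change is then \(\le -3\).

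Both crossed transfers cannot be degenerate at once. For instance, \(k_1=\ell_2\) together with \(k_2=\ell_1\) would give
\[
\delta_1+\delta_2=\bigl(r(c_2)+1-r(c_1)\bigr)+\bigl(r(c_1)+1-r(c_2)\bigr)=2>0,
\]
a contradiction. The mixed pairs of degeneracies (one row-equality with one column-equality, or two row-equalities) are excluded by the same kind of computation.

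Therefore at least one crossed transfer \(\rho\) is admissible and satisfies \(h(\rho)<h(\lambda)\). The step I expect to require the most care is the check that the chosen crossed transfer really is an admissible transfer in the sense of the paper, namely that the corner types and rows are compatible and that \(\rho\neq\lambda,\mu_1,\mu_2\). I would carry this out entirely in conjugate coordinates via Lemma~\ref{lem:adjacency-conjugate-criterion}.
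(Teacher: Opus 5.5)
Your proposal is correct and is essentially the paper's argument: in the mixed case both proofs insert a crossed transfer $\rho$ and slide the fragment across one star-triangle and one top-triangle at $\lambda$. The only difference is how $\rho$ is chosen. The paper orders the removable corners so that $r(c_1)<r(c_2)$ and takes $\rho=\lambda(c_2\to a_1)$, which is automatically admissible and height-lowering because $r(a_1)<r(c_1)<r(c_2)$ forces $\ell(a_1)>k(c_2)$; this makes your sum-of-changes bookkeeping and degeneracy case analysis, while valid, unnecessary.
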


\begin{proof}
Write
\[
\mu_1=\lambda(c_1\to a_1),\qquad
\mu_2=\lambda(c_2\to a_2).
\]
Since \(h(\mu_i)<h(\lambda)\), Lemma~\ref{lem:height-change-recalled} implies
\[
r(a_1)<r(c_1),\qquad r(a_2)<r(c_2).
\]

If \(c_1=c_2\) or \(a_1=a_2\), then \(\lambda,\mu_1,\mu_2\) span a triangle in \(K_n\) by Lemmas~\ref{lem:star-triangle-criterion} and \ref{lem:top-triangle-criterion}. Hence the path
\[
\mu_1-\lambda-\mu_2
\]
is homotopic, relative to its endpoints, to the single edge
\[
\mu_1-\mu_2.
\]

Assume now that \(c_1\neq c_2\) and \(a_1\neq a_2\). Without loss of generality, reorder indices so that
\[
r(c_1)\le r(c_2).
\]
Define
\[
\nu:=\lambda(c_2\to a_1).
\]

We claim that this move is admissible. Indeed, removing the removable corner \(c_2\) preserves the Ferrers condition. Moreover, since
\[
r(a_1)<r(c_1)\le r(c_2),
\]
all rows weakly above \(r(a_1)\) are unchanged when the cell \(c_2\) is removed. Hence the local Ferrers conditions that make \(a_1\) addable remain valid after the removal of \(c_2\). Therefore \(a_1\) is still addable in the intermediate diagram, and the partition \(\nu\) is well defined.

Now \(\mu_1\) and \(\nu\) share the same addable corner \(a_1\), so \(\lambda,\mu_1,\nu\) span a top-triangle by Lemma~\ref{lem:top-triangle-criterion}. Similarly, \(\nu\) and \(\mu_2\) share the same removable corner \(c_2\), so \(\lambda,\nu,\mu_2\) span a star-triangle by Lemma~\ref{lem:star-triangle-criterion}. In particular, \(\mu_1\) is adjacent to \(\nu\) and \(\nu\) is adjacent to \(\mu_2\). Therefore
\[
\mu_1-\lambda-\mu_2
\]
is homotopic in \(K_n\) to
\[
\mu_1-\nu-\mu_2.
\]

Finally, the move \(\lambda\to \nu\) sends a cell from row \(r(c_2)\) to row \(r(a_1)\), and
\[
r(a_1)<r(c_2),
\]
so Lemma~\ref{lem:height-change-recalled} gives
\[
h(\nu)<h(\lambda).
\]
Thus every intermediate vertex in the replacement path has height strictly smaller than \(h(\lambda)\), as required.
\end{proof}

\subsection{Maximal-height vertices are isolated peaks}

The height function has a decisive simplifying feature: adjacent vertices always have different heights. Hence every maximal-height vertex along a loop is automatically isolated.

\begin{proposition}\label{prop:max-height-isolated}
Let \(L\) be an edge-loop in \(K_n\). Every vertex of \(L\) of height \(H(L)\) has both neighboring vertices of strictly smaller height.
\end{proposition}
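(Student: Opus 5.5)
The plan is to argue vertex by vertex along the loop. Write
\[
L=(\lambda_0,\lambda_1,\dots,\lambda_m=\lambda_0),
\]
and fix an index \(i\) with \(h(\lambda_i)=H(L)\). Its two neighbouring vertices on the loop are \(\lambda_{i-1}\) and \(\lambda_{i+1}\), with indices read cyclically modulo \(m\). By the definition of \(H(L)\) as a maximum, both satisfy
\[
h(\lambda_{i\pm1})\le H(L).
\]
So it is enough to rule out equality, and for that I would invoke Lemma~\ref{lem:adjacent-heights-different-recalled}.

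Before doing so I would make one convention explicit. An edge-loop traverses edges of the \(1\)-skeleton \(G_n\), so consecutive vertices are either adjacent in \(G_n\) or equal. Equal consecutive vertices correspond to degenerate, stationary steps. I would assume, as is standard, that such repetitions have been deleted. Deleting them does not change the homotopy class of the loop, and it does not increase \((H(L),M(L))\) in the lexicographic order, so this normalization is harmless for the later induction.

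With the normalization in place, \(\lambda_{i-1}\) and \(\lambda_i\) are distinct adjacent vertices of \(G_n\). Lemma~\ref{lem:adjacent-heights-different-recalled} therefore gives
\[
h(\lambda_{i-1})\ne h(\lambda_i)=H(L),
\]
which combined with \(h(\lambda_{i-1})\le H(L)\) yields the strict inequality. The same argument applies to \(\lambda_{i+1}\).

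The only delicate point is this convention about degenerate steps, including the degenerate case \(m\le 1\), where the loop is constant and there is nothing to prove. I would state the normalization explicitly before the argument. After that, the proposition follows at once from Lemma~\ref{lem:adjacent-heights-different-recalled}. In particular, each maximal-height vertex is a strict local peak, so Lemma~\ref{lem:local-peak-reduction-final} applies to it directly.
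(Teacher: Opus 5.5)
Your argument is correct and is the same as the paper's: maximality of \(H(L)\) gives \(h(\lambda_{i\pm1})\le H(L)\), and Lemma~\ref{lem:adjacent-heights-different-recalled} rules out equality. Your explicit normalization deleting stationary steps is a worthwhile addition. The paper's proof tacitly assumes that consecutive loop vertices are distinct, and that lemma needs it.
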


\begin{proof}
Let \(\lambda_i\) be a vertex of \(L\) such that
\[
h(\lambda_i)=H(L).
\]
Its neighbors in the loop are \(\lambda_{i-1}\) and \(\lambda_{i+1}\). By Lemma~\ref{lem:adjacent-heights-different-recalled}, neither of these can have the same height as \(\lambda_i\). Since \(H(L)\) is maximal along the loop, both must have strictly smaller height.
\end{proof}

\subsection{Reduction of edge-loops}

We can now reduce any nonconstant loop.

\begin{proposition}\label{prop:loop-reduction}
Every nonconstant edge-loop \(L\) in \(K_n\) is homotopic to an edge-loop \(L'\) with
\[
(H(L'),M(L'))<(H(L),M(L)).
\]
\end{proposition}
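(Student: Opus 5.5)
Pick a vertex $\lambda_i$ of $L$ with $h(\lambda_i)=H(L)$. Since $L$ is nonconstant, it has at least two distinct vertices. By \Cref{lem:adjacent-heights-different-recalled}, consecutive vertices of the loop that are joined by an edge are distinct and have distinct heights; we may first delete any repeated consecutive vertices (stationary steps), which changes neither the homotopy class nor the pair $(H,M)$. We then apply \Cref{prop:max-height-isolated}: both loop-neighbours $\lambda_{i-1}$ and $\lambda_{i+1}$ have height strictly smaller than $h(\lambda_i)$. (Indices are read cyclically. If $m=2$, then $\lambda_{i-1}=\lambda_{i+1}$, and the backtracking fragment $\mu-\lambda_i-\mu$ is simply replaced by the constant path at $\mu$.)

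Next we apply \Cref{lem:local-peak-reduction-final} to the fragment $\lambda_{i-1}-\lambda_i-\lambda_{i+1}$. This replaces the fragment, by a homotopy rel endpoints in $K_n$, with either the single edge $\lambda_{i-1}-\lambda_{i+1}$ or the path $\lambda_{i-1}-\nu-\lambda_{i+1}$, where $h(\nu)<h(\lambda_i)$. One degenerate case needs care: in the triangle case it may happen that $\lambda_{i-1}=\lambda_{i+1}$. Then we again contract the backtrack to the constant path. Call the resulting loop $L'$. It is homotopic to $L$, since the homotopy is supported on the fragment and is relative to its endpoints, so it extends by the identity on the rest of the loop.

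It remains to compare the complexities. Every vertex of $L'$ is either a vertex of $L$ other than this occurrence of $\lambda_i$, or the new vertex $\nu$, which has height $<H(L)$. Hence $H(L')\le H(L)$. Exactly one occurrence of a height-$H(L)$ vertex has been removed, and none has been added. So if $H(L')=H(L)$, then $M(L')=M(L)-1$; otherwise $H(L')<H(L)$. In both cases $(H(L'),M(L'))<(H(L),M(L))$.

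The main point requiring care is the bookkeeping of degenerate loops: the case $m=2$, backtracking fragments, and whether $L'$ is well defined when it collapses to a constant loop. For a constant loop we set $M$ as the number of its vertex occurrences, namely one, with $H$ equal to its height, so the strict decrease still holds. Apart from this, the statement follows directly from \Cref{lem:local-peak-reduction-final} and \Cref{prop:max-height-isolated}.
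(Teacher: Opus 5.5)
Your proposal is correct and follows the paper's argument. You pick a vertex of height \(H(L)\), use Proposition~\ref{prop:max-height-isolated} to see that both loop-neighbours are lower, apply Lemma~\ref{lem:local-peak-reduction-final}, and observe that one occurrence of height \(H(L)\) disappears while none is created. Your extra bookkeeping (stationary steps, backtracks \(\mu-\lambda-\mu\), the case \(m=2\), constant loops) is a harmless refinement that the paper leaves implicit. Two small points: deleting a stationary step at a maximal-height vertex actually lowers \(M\) rather than leaving it unchanged, which only helps; and when the peak is the base point, the replacement is a free homotopy rather than one fixing the base point, which still suffices for Theorem~\ref{thm:Kn-simply-connected}.
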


\begin{proof}
Choose a vertex \(\lambda\) of \(L\) with height \(H(L)\). By Proposition~\ref{prop:max-height-isolated}, the two neighboring vertices of \(\lambda\) along the loop have strictly smaller height. Hence the corresponding length-two fragment satisfies the hypotheses of Lemma~\ref{lem:local-peak-reduction-final}. Replacing it by the homotopic path given by that lemma produces a new loop \(L'\).

By construction, every newly introduced intermediate vertex has height strictly smaller than \(H(L)\). Thus at least one occurrence of height \(H(L)\) disappears and no new one is created. Therefore
\[
(H(L'),M(L'))<(H(L),M(L))
\]
in lexicographic order.
\end{proof}

\subsection{Simple connectedness}

We may now complete the proof.

\begin{theorem}\label{thm:Kn-simply-connected}
The clique complex \(K_n\) is simply connected.
\end{theorem}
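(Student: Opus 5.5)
The argument is a well-founded induction on the pair \((H(L),M(L))\) from Proposition~\ref{prop:loop-reduction}, carried out at the level of edge-loops.

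\textbf{Reduction to edge-loops.} By Proposition~\ref{prop:Kn-connected}, \(K_n\) is path-connected, so \(\pi_1(K_n)\) is independent of basepoint. By simplicial approximation, every loop in \(K_n\) is homotopic to an edge-loop in the \(1\)-skeleton \(G_n\). It therefore suffices to show that every edge-loop \(L\) is null-homotopic.

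\textbf{Handling the basepoint.} Proposition~\ref{prop:loop-reduction} alters the loop near a peak vertex, and that vertex may be the basepoint. To avoid this, I would work with free homotopy of loops. A loop in a path-connected space is null-homotopic as a based loop exactly when it is freely null-homotopic. So it is enough to show that \(L\) is freely homotopic to a constant loop. Working with free loops also lets us regard \(L\) as cyclic, so every vertex has two loop-neighbours, which is the setting of Proposition~\ref{prop:max-height-isolated}.

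\textbf{The induction.} The pairs \((H,M)\) take values in \(\mathbb Z_{\ge 0}\times\mathbb Z_{\ge 1}\), ordered lexicographically, and \(H\) is bounded above by \(\max_\lambda h(\lambda)\). Hence there is no infinite strictly decreasing sequence of such pairs. Suppose \(L\) is nonconstant. Apply Proposition~\ref{prop:loop-reduction} to get a freely homotopic loop \(L'\) with \((H(L'),M(L'))<(H(L),M(L))\). If \(L'\) is constant we stop; otherwise we repeat. Well-foundedness forces this process to terminate, and it can only terminate at a constant loop. Therefore \(L\) is null-homotopic and \(\pi_1(K_n)=0\).

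\textbf{Degenerate configurations.} The main work is checking the edge cases that the reduction step does not obviously cover.

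\emph{Backtracks.} Lemma~\ref{lem:local-peak-reduction-final} needs \(\mu_1,\mu_2\) to be the two loop-neighbours of the peak \(\lambda\). If \(\mu_1=\mu_2\), the fragment \(\mu_1-\lambda-\mu_1\) is a backtrack. It is homotopic rel endpoints to the constant path at \(\mu_1\), and deleting it removes an occurrence of height \(H\), so the pair still decreases.

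\emph{Short loops.} If \(L\) has length \(2\), it is a single backtracked edge and is null-homotopic directly. The triangle-replacement case of Lemma~\ref{lem:local-peak-reduction-final} may produce the single edge \(\mu_1-\mu_2\), and the loop can shrink to length \(1\). That only happens when \(\mu_1=\mu_2\), and then the loop is constant.

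\emph{Lengths of \(3\) or more.} Here the replacement path of Lemma~\ref{lem:local-peak-reduction-final} is a genuine edge-path in \(G_n\). Its new intermediate vertex \(\nu\) satisfies \(h(\nu)<H\), so no new vertex of height \(H\) appears.

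I expect the main obstacle to be precisely this bookkeeping: confirming that the free-versus-based reduction is valid and that every degenerate configuration still strictly decreases \((H,M)\). The homotopy-theoretic content is already supplied by Proposition~\ref{prop:loop-reduction}.
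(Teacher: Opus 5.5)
Your proposal is correct and follows the paper's own proof: both run a well-founded induction on the lexicographic pair \((H(L),M(L))\) driven by Proposition~\ref{prop:loop-reduction}; the paper phrases it as a minimal counterexample, while you iterate the reduction until it terminates. Your extra bookkeeping (working with free homotopy so the peak may be the basepoint, and treating backtracks \(\mu_1=\mu_2\) and degenerate short loops) fills in details the paper leaves implicit without changing the argument.
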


\begin{proof}
By Proposition~\ref{prop:Kn-connected}, the complex \(K_n\) is connected. It therefore suffices to show that every edge-loop in \(K_n\) is null-homotopic.

We argue by well-founded induction on the lexicographically ordered pair \((H(L),M(L))\). This order is well founded because \(h\) takes integer values on the finite set \(\Par(n)\). Assume, for contradiction, that there exists a non-null-homotopic edge-loop, and choose one for which \((H(L),M(L))\) is minimal.

Since \(L\) is nonconstant, Proposition~\ref{prop:loop-reduction} produces a homotopic edge-loop \(L'\) such that
\[
(H(L'),M(L'))<(H(L),M(L)).
\]
By minimality, \(L'\) is null-homotopic. Hence \(L\) is null-homotopic as well, a contradiction.

Therefore every edge-loop in \(K_n\) is null-homotopic, and so
\[
\pi_1(K_n)=0.
\]
\end{proof}

\begin{corollary}\label{cor:Kn-connected-simply-connected}
The clique complex \(K_n\) is connected and simply connected.
\end{corollary}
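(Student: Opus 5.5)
This corollary is a direct combination of two results already proved, so the plan is short. For connectedness I will invoke Proposition~\ref{prop:Kn-connected}. It shows that every vertex of \(G_n\) is joined by an edge-path to the one-row partition \((n)\). Since \(G_n\) is the \(1\)-skeleton of \(K_n\), path-connectedness of the geometric realization follows at once: every point lies in a simplex, and every simplex contains a vertex.

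For simple connectedness I will invoke Theorem~\ref{thm:Kn-simply-connected}. The only point to make explicit is the reduction from arbitrary loops to edge-loops. By simplicial approximation, every loop in a simplicial complex based at a vertex is homotopic to an edge-loop in the \(1\)-skeleton. Hence \(\pi_1(K_n)\) is generated by classes of edge-loops in \(G_n\), and the theorem shows each of these is null-homotopic.

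In that theorem the induction on \((H(L),M(L))\) uses Lemma~\ref{lem:local-peak-reduction-final}. Its replacement path either is a single edge \(\mu_1\)--\(\mu_2\) or passes through \(\nu\) with \(h(\nu)<h(\lambda)\). In both cases it removes one peak occurrence without creating new maximal-height vertices. The base case is a constant loop, which is trivially null-homotopic.

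The only step I would double-check is the degenerate situation where the loop backtracks, so that \(\mu_1=\mu_2\). There the fragment \(\mu_1\)--\(\lambda\)--\(\mu_1\) is null-homotopic rel endpoints and can simply be deleted, which again lowers \((H,M)\). I would record this explicitly as a remark. Beyond that, the corollary is immediate, and I expect no real obstacle.
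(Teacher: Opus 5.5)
Your proposal is correct and takes the same approach as the paper: the paper's proof simply combines Proposition~\ref{prop:Kn-connected} and Theorem~\ref{thm:Kn-simply-connected}. Your extra remarks are valid clarifications of points the paper leaves implicit. These are the simplicial-approximation reduction to edge-loops, the passage from connectivity of \(G_n\) to path-connectedness of \(|K_n|\), and deleting a backtrack \(\mu_1\)--\(\lambda\)--\(\mu_1\), where the lemma's ``single edge'' degenerates.
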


\begin{proof}
Combine Proposition~\ref{prop:Kn-connected} and Theorem~\ref{thm:Kn-simply-connected}.
\end{proof}

\subsection{Summary}

We have shown that \(K_n\) is connected and simply connected. Together with the \(2\)-dimensional model obtained in Section~\ref{sec:anchor-cover-2-model}, this provides the final topological input needed for the wedge-of-spheres theorem proved in the next section.

\section{The homotopy type of the clique complex}
\label{sec:homotopy-type}

We now combine the two topological inputs established in the previous sections. Section~\ref{sec:anchor-cover-2-model} showed that \(K_n\) has the homotopy type of a CW-complex of dimension at most \(2\), while Section~\ref{sec:connectedness-and-simple-connectedness} showed that \(K_n\) is connected and simply connected. It follows that the reduced homology of \(K_n\) is concentrated in degree \(2\), and this determines its homotopy type completely.

\subsection{A simply connected \texorpdfstring{\(2\)}{2}-dimensional model}

We first record the combined conclusion of Sections~\ref{sec:anchor-cover-2-model} and \ref{sec:connectedness-and-simple-connectedness}.

\begin{proposition}\label{prop:simply-connected-2-model}
The clique complex \(K_n\) has the homotopy type of a simply connected CW-complex of dimension at most \(2\).
\end{proposition}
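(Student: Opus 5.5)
The plan is to combine Corollary~\ref{cor:Kn-has-2-dimensional-model} with Corollary~\ref{cor:Kn-connected-simply-connected}. By Corollary~\ref{cor:Kn-has-2-dimensional-model}, realized concretely through Theorem~\ref{thm:Kn-equivalent-order-complex-Jn} and Theorem~\ref{thm:J_n-dimension-at-most-2}, there is a finite simplicial complex $X:=\Delta(\mathcal J_n)$ with $\dim X\le 2$ and a homotopy equivalence $f\colon K_n\to X$. I will take $X$, with its simplicial CW structure, as the required model.

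The remaining point is that the model inherits the fundamental-group properties of $K_n$. Path-connectedness and the fundamental group are homotopy invariants. Hence $\pi_0(X)\cong\pi_0(K_n)$ is a point by Proposition~\ref{prop:Kn-connected}. Moreover, $f_*\colon\pi_1(K_n,\lambda)\to\pi_1(X,f(\lambda))$ is an isomorphism for any base vertex $\lambda$, so $\pi_1(X)=0$ by Theorem~\ref{thm:Kn-simply-connected}. Thus $X$ is a simply connected CW-complex of dimension at most $2$ that is homotopy equivalent to $K_n$, which is exactly the statement.

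There is essentially no obstacle here. The only care needed is to use a single fixed model $X$ for both properties, rather than two separate models, and to note that simple connectedness transfers along homotopy equivalences. If one prefers to avoid the order complex, the same argument works verbatim with $N_n$ via Corollary~\ref{cor:Kn-nerve-equivalence} and Theorem~\ref{thm:Nn-has-2-dimensional-model}.
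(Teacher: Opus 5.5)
Your proposal is correct and is essentially the paper's proof. Like the paper, you combine Corollary~\ref{cor:Kn-has-2-dimensional-model} with Corollary~\ref{cor:Kn-connected-simply-connected} and transfer connectedness and simple connectedness along the homotopy equivalence to a single fixed model of dimension at most $2$. Naming that model explicitly as $\Delta(\mathcal J_n)$ is a harmless concretization. Your closing aside does not really avoid the order complex, though: $N_n$ itself is not $2$-dimensional, and Theorem~\ref{thm:Nn-has-2-dimensional-model} is proved via $\Delta(\mathcal J_n)$.
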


\begin{proof}
By Corollary~\ref{cor:Kn-has-2-dimensional-model}, the complex \(K_n\) is homotopy equivalent to a CW-complex of dimension at most \(2\). By Corollary~\ref{cor:Kn-connected-simply-connected}, the complex \(K_n\) is connected and simply connected. Since connectedness and simple connectedness are homotopy invariants, the chosen CW-model is also connected and simply connected.
\end{proof}

\subsection{Homology and Euler characteristic}

We now determine the reduced homology of \(K_n\).

\begin{proposition}\label{prop:H2-rank}
The reduced homology of \(K_n\) is concentrated in degree \(2\), and
\[
\widetilde H_2(K_n)\cong \mathbb Z^{\,\chi(K_n)-1}.
\]
Equivalently,
\[
\widetilde H_i(K_n)=0 \quad (i\neq 2),
\qquad
\operatorname{rank}\widetilde H_2(K_n)=\chi(K_n)-1.
\]
\end{proposition}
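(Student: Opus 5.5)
By Proposition~\ref{prop:simply-connected-2-model}, we may fix a connected, simply connected CW-complex \(X\) of dimension at most \(2\) with \(X\simeq K_n\). Since homology and Euler characteristic are homotopy invariants, it suffices to compute \(\widetilde H_*(X)\) and relate it to \(\chi(X)=\chi(K_n)\). Here \(\chi(K_n)\) is computed simplicially, and it equals the homology Euler characteristic, which is what makes it a homotopy invariant.

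The degrees are handled in order.
\begin{enumerate}
\item For \(i\ge 3\), \(\widetilde H_i(X)=0\) because \(X\) has no cells in those dimensions. This is also Corollary~\ref{cor:high-homology-vanishes}.
\item \(\widetilde H_0(X)=0\) because \(X\) is connected (Proposition~\ref{prop:Kn-connected}).
\item \(H_1(X)\) is the abelianization of \(\pi_1(X)=0\) by Hurewicz, so it vanishes (Theorem~\ref{thm:Kn-simply-connected}).
\item In degree \(2\), \(H_2(X)=\ker(\partial_2\colon C_2^{\mathrm{cell}}(X)\to C_1^{\mathrm{cell}}(X))\) because there are no \(3\)-cells. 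It is therefore a subgroup of a finitely generated free abelian group, hence free of finite rank.
\end{enumerate}

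The rank then comes from the Euler–Poincaré formula. Alternating sums of ranks of homology agree with alternating sums of cell counts, so
\[
\chi(K_n)=\chi(X)=\sum_i(-1)^i\operatorname{rank}H_i(X)=1-0+\operatorname{rank}H_2(X).
\]
This gives \(\operatorname{rank}\widetilde H_2(K_n)=\chi(K_n)-1\). Combined with freeness, \(\widetilde H_2(K_n)\cong\mathbb Z^{\chi(K_n)-1}\).

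Finiteness of \(X\) needs one check, because it is used in the rank argument. The model from Section~\ref{sec:anchor-cover-2-model} is the order complex \(\Delta(\mathcal J_n)\) of a finite poset, so it is a finite simplicial complex and we may take \(X=\Delta(\mathcal J_n)\). The only other delicate point is torsion-freeness in degree \(2\). Here I rely on the top-dimensional cycle group being a subgroup of a free group; this is exactly where dimension \(\le 2\) is essential and is not merely a statement about ranks.
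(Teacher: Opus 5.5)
Your proposal is correct and follows the same route as the paper. You get \(\widetilde H_0=0\), \(H_1=0\) and \(\widetilde H_{k}=0\) for \(k\ge 3\) from connectedness, simple connectedness and the \(2\)-dimensional model; \(H_2\) is free because the model has dimension at most \(2\); and its rank comes from \(\widetilde\chi(K_n)=\chi(K_n)-1\). Your extra details (identifying \(H_2\) with \(\ker\partial_2\) in the finite complex \(\Delta(\mathcal J_n)\) and invoking the Euler--Poincar\'e formula) just make explicit what the paper asserts in one line.
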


\begin{proof}
Since \(K_n\) is connected, we have
\[
\widetilde H_0(K_n)=0.
\]
Since \(K_n\) is path-connected and simply connected, its first homology vanishes:
\[
H_1(K_n)=0.
\]
By Corollary~\ref{cor:high-homology-vanishes},
\[
\widetilde H_k(K_n)=0 \qquad (k\ge 3).
\]

Thus the only possibly nonzero reduced homology group is \(\widetilde H_2(K_n)\). Since \(K_n\) has the homotopy type of a simply connected CW-complex of dimension at most \(2\), the group \(\widetilde H_2(K_n)\) is free abelian. The reduced Euler characteristic therefore satisfies
\[
\widetilde\chi(K_n)=\operatorname{rank}\widetilde H_2(K_n).
\]
Using
\[
\widetilde\chi(K_n)=\chi(K_n)-1,
\]
we obtain
\[
\operatorname{rank}\widetilde H_2(K_n)=\chi(K_n)-1.
\]
This proves the claim.
\end{proof}

\subsection{A wedge lemma for simply connected \texorpdfstring{\(2\)}{2}-complexes}

For completeness, we isolate the standard fact needed to pass from homology to homotopy type; compare \cite[Chapter~4]{Hatcher2002}.

\begin{lemma}\label{lem:simply-connected-2-complex-wedge}
Let \(X\) be a connected, simply connected CW-complex of dimension at most \(2\). Then \(X\) is homotopy equivalent to a wedge of \(2\)-spheres. More precisely,
\[
X \simeq \bigvee^{\,r} S^2,
\qquad
r=\operatorname{rank} H_2(X).
\]
\end{lemma}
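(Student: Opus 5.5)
The plan is the standard Hurewicz--Whitehead argument. Let \(X\) be connected, simply connected, with \(\dim X\le 2\). First we compute its homology. Cellular homology gives \(H_k(X)=0\) for \(k\ge 3\). Since \(X\) is path-connected and \(\pi_1(X)=0\), also \(\widetilde H_0(X)=0\) and \(H_1(X)=0\). Moreover
\[
H_2(X)=\ker\bigl(\partial_2:C_2(X)\to C_1(X)\bigr),
\]
because there are no \(3\)-cells. It is a subgroup of a free abelian group, hence free abelian. When \(X\) is finite (the case needed for \(K_n\)), \(H_2(X)\cong\mathbb Z^r\) with \(r=\operatorname{rank}H_2(X)\).

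Next we build the comparison map. By the Hurewicz theorem for a simply connected space, \(h:\pi_2(X)\to H_2(X)\) is an isomorphism. Choose a basis \(z_1,\dots,z_r\) of \(H_2(X)\) and represent each class by a map \(f_i:S^2\to X\) with \(h([f_i])=z_i\). We may take the \(f_i\) based at a \(0\)-cell, using the based version of Hurewicz (which is harmless since \(\pi_1=0\)). Assemble these into
\[
f=\bigvee_i f_i:\ \bigvee^{\,r}S^2\to X.
\]
By construction \(f_*\) sends the standard basis of \(H_2\bigl(\bigvee^r S^2\bigr)\cong\mathbb Z^r\) to \(z_1,\dots,z_r\), so \(f_*\) is an isomorphism on \(H_2\). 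Both sides have vanishing reduced homology in all other degrees, so \(f\) induces isomorphisms on all integral homology groups.

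Finally we conclude. Both \(\bigvee^r S^2\) and \(X\) are simply connected CW-complexes. The homology Whitehead theorem (a map between simply connected CW-complexes inducing isomorphisms on \(H_*\) is a homotopy equivalence) then shows \(f\) is a homotopy equivalence. This gives \(X\simeq\bigvee^r S^2\) with \(r=\operatorname{rank}H_2(X)\). If \(r=0\), the wedge is a point, and the same argument shows \(X\) is contractible.

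The only delicate point is the appeal to Hurewicz and Whitehead, specifically checking that every hypothesis is in place: simple connectivity of both source and target, and freeness of \(H_2\), which comes from the absence of \(3\)-cells. For infinite complexes one would need a basis of a possibly infinitely generated free group, but the same argument works with an arbitrary basis. In the application \(X\) is finite, since \(\Delta(\mathcal J_n)\) is a finite complex, so no issue arises. Combined with Proposition~\ref{prop:simply-connected-2-model} and Proposition~\ref{prop:H2-rank}, this yields \(r=\chi(K_n)-1\).
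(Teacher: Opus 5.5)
Your proposal is correct and follows essentially the same route as the paper. Both arguments use freeness of \(H_2(X)\) from the absence of \(3\)-cells, then Hurewicz to realize a basis of \(H_2(X)\) by maps \(S^2\to X\), and finally the homology Whitehead theorem for simply connected CW-complexes applied to the resulting map \(\bigvee^{r} S^2\to X\); your remarks on basepoints and on \(H_2(X)=\ker\partial_2\) only make explicit what the paper leaves implicit.
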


\begin{proof}
Since \(X\) is connected and simply connected, the Hurewicz theorem gives an isomorphism
\[
\pi_2(X)\cong H_2(X).
\]
Because \(X\) has dimension at most \(2\), the group \(H_2(X)\) is free abelian. Choose elements
\[
\alpha_1,\dots,\alpha_r\in \pi_2(X)
\]
whose Hurewicz images form a basis of \(H_2(X)\), where \(r=\operatorname{rank}H_2(X)\). These classes determine a map
\[
f:\bigvee^{\,r}S^2\longrightarrow X.
\]
By construction, \(f\) induces an isomorphism on \(H_2\). Both spaces are connected and simply connected, and both have trivial homology in degrees \(\ge 3\) because they have dimension at most \(2\). Hence \(f\) induces an isomorphism on all homology groups. By the Whitehead theorem for simply connected CW-complexes, \(f\) is a homotopy equivalence; compare \cite[Chapter~4]{Hatcher2002}. Therefore
\[
X\simeq \bigvee^{\,r} S^2.
\]
\end{proof}

\subsection{The main theorem}

We can now state and prove the principal result of the paper.

\begin{theorem}\label{thm:main-wedge}
For every \(n\ge 1\), the clique complex \(K_n\) is homotopy equivalent to a wedge of \(2\)-spheres:
\[
K_n \simeq \bigvee^{\,b_n} S^2,
\qquad
b_n=\chi(K_n)-1.
\]
\end{theorem}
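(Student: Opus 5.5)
The plan is to assemble the main theorem from the three topological inputs already proved, with essentially no new combinatorics. First, by Proposition~\ref{prop:simply-connected-2-model}, there is a connected, simply connected CW-complex \(X\) of dimension at most \(2\) together with a homotopy equivalence \(K_n\simeq X\). Concretely, we take \(X=\Delta(\mathcal J_n)\), supplied by Theorem~\ref{thm:Kn-equivalent-order-complex-Jn} and Theorem~\ref{thm:J_n-dimension-at-most-2}. Its connectedness and simple connectedness are transported from Corollary~\ref{cor:Kn-connected-simply-connected} by homotopy invariance.

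Second, we apply Lemma~\ref{lem:simply-connected-2-complex-wedge} to \(X\). This gives
\[
X\simeq \bigvee^{\,r} S^2,\qquad r=\operatorname{rank} H_2(X),
\]
and composing the two equivalences yields \(K_n\simeq\bigvee^{\,r}S^2\). The case \(r=0\) is allowed: an empty wedge is a point, and the lemma's argument still applies, since a simply connected \(2\)-complex with \(H_2=0\) is contractible by Hurewicz and Whitehead.

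Third, we identify \(r\) with \(b_n=\chi(K_n)-1\). Since homology is a homotopy invariant, \(H_2(X)\cong H_2(K_n)=\widetilde H_2(K_n)\). Proposition~\ref{prop:H2-rank} then gives \(\operatorname{rank}\widetilde H_2(K_n)=\chi(K_n)-1\). As a consistency check, we can verify this directly: the reduced Euler characteristic of \(\bigvee^{r}S^2\) is \(r\), and Euler characteristic is a homotopy invariant of finite complexes. Since \(K_n\) is a finite simplicial complex, \(\chi(K_n)\) is computed from its face numbers, and so \(\chi(K_n)-1=\widetilde\chi(K_n)=r\). In particular \(\chi(K_n)\ge 1\), so \(b_n\) is a nonnegative integer and the statement makes sense.

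The only point needing care is the second step, specifically the Whitehead argument inside Lemma~\ref{lem:simply-connected-2-complex-wedge}. The map \(f:\bigvee^{r}S^2\to X\) built from Hurewicz preimages must induce isomorphisms on all homology groups. This holds because \(H_2(X)\) is free, as \(X\) has no \(3\)-cells, and both spaces have vanishing homology outside degrees \(0\) and \(2\). The lemma is already stated, so we would simply invoke it.
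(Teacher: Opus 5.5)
Your proposal is correct and is essentially the paper's own proof. Like the paper, you combine the simply connected $2$-dimensional model of Proposition~\ref{prop:simply-connected-2-model}, the wedge Lemma~\ref{lem:simply-connected-2-complex-wedge}, and the identification $\operatorname{rank}\widetilde H_2(K_n)=\chi(K_n)-1$ from Proposition~\ref{prop:H2-rank}; your additional Euler-characteristic cross-check, via $\widetilde\chi(\bigvee^{r}S^2)=r$ and homotopy invariance of $\chi$, is a harmless alternative way to pin down $r$ and also confirms $b_n\ge 0$.
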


\begin{proof}
By Proposition~\ref{prop:simply-connected-2-model}, the complex \(K_n\) has the homotopy type of a simply connected CW-complex of dimension at most \(2\). By Proposition~\ref{prop:H2-rank}, its reduced homology is concentrated in degree \(2\), where it is free abelian of rank \(\chi(K_n)-1\). Applying Lemma~\ref{lem:simply-connected-2-complex-wedge}, we obtain
\[
K_n \simeq \bigvee^{\,\chi(K_n)-1} S^2.
\]
\end{proof}

\subsection{Immediate consequences}

We record two direct corollaries.

\begin{corollary}\label{cor:homology-concentrated}
For every \(n\ge 1\),
\[
\widetilde H_i(K_n)=0 \quad (i\neq 2),
\qquad
\widetilde H_2(K_n)\cong \mathbb Z^{\,\chi(K_n)-1}.
\]
\end{corollary}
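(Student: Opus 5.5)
This corollary is a direct read-off from Theorem~\ref{thm:main-wedge}, with Proposition~\ref{prop:H2-rank} as an independent route. The plan is to compute the reduced homology of the model space \(\bigvee^{\,b_n}S^2\), where \(b_n=\chi(K_n)-1\), and then transfer it to \(K_n\) by homotopy invariance of homology.

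\textbf{Step 1: homology of the wedge.} For a wedge of spheres, reduced homology is additive over the wedge summands. This holds because the wedge point is a good basepoint, or equivalently by Mayer--Vietoris, or by the cellular chain complex with one \(0\)-cell and \(b_n\) \(2\)-cells and zero differentials. Since \(\widetilde H_i(S^2)=\mathbb Z\) for \(i=2\) and \(0\) otherwise, this gives
\[
\widetilde H_i\Bigl(\bigvee^{\,b_n}S^2\Bigr)=0 \quad (i\neq 2),
\qquad
\widetilde H_2\Bigl(\bigvee^{\,b_n}S^2\Bigr)\cong\mathbb Z^{\,b_n}.
\]

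\textbf{Step 2: transfer to \(K_n\).} By Theorem~\ref{thm:main-wedge}, \(K_n\) is homotopy equivalent to this wedge, so homotopy invariance of homology yields the corollary.

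\textbf{Alternative route.} Proposition~\ref{prop:H2-rank} already asserts the vanishing of \(\widetilde H_i(K_n)\) for \(i\neq 2\) and gives the rank of \(\widetilde H_2(K_n)\). The only extra point needed is that \(\widetilde H_2(K_n)\) is free. This follows because the top-dimensional homology of a \(2\)-dimensional CW model is the kernel of the cellular boundary map \(C_2\to C_1\), and a subgroup of a free abelian group is free.

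\textbf{Well-definedness of the exponent.} The one subtlety is that \(b_n\) must be a nonnegative integer for the wedge to make sense. This is automatic, since it equals the rank of \(\widetilde H_2(K_n)\). No step here is a genuine obstacle.
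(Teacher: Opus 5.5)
Your proposal is correct and essentially matches the paper. The paper's proof is a one-line citation of Proposition~\ref{prop:H2-rank}, which is your alternative route; the freeness of \(\widetilde H_2(K_n)\) is already part of that proposition's statement, so your extra argument for it is redundant but harmless. Your main route through Theorem~\ref{thm:main-wedge} is also valid and not circular, since that theorem was proved before and independently of this corollary. It simply takes an unnecessary detour through the Hurewicz--Whitehead step.
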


\begin{proof}
This is just Proposition~\ref{prop:H2-rank}.
\end{proof}

\begin{corollary}\label{cor:homotopy-determined-by-euler}
The homotopy type of \(K_n\) is completely determined by its Euler characteristic.
\end{corollary}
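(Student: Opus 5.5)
This corollary should follow directly from \Cref{thm:main-wedge}, so the plan is to unpack what ``completely determined'' means and check that the wedge description supplies it. Concretely, I will show: if \(n,m\ge 1\) satisfy \(\chi(K_n)=\chi(K_m)\), then \(K_n\simeq K_m\). More intrinsically, the homotopy type of \(K_n\) is the value of the function \(r\mapsto\bigvee^{r}S^2\) at \(r=\chi(K_n)-1\).

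First, apply \Cref{thm:main-wedge} to get
\[
K_n\simeq\bigvee^{\,\chi(K_n)-1}S^2 .
\]
Here \(\chi(K_n)-1\ge 0\), since by \Cref{prop:H2-rank} it is the rank of a free abelian group. So the number of spheres is a well-defined nonnegative integer that depends only on \(\chi(K_n)\).

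Second, if \(\chi(K_n)=\chi(K_m)\), then both complexes are homotopy equivalent to the same wedge \(\bigvee^{\,b}S^2\) with \(b=\chi(K_n)-1\). Composing one equivalence with a homotopy inverse of the other gives \(K_n\simeq K_m\).

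For completeness, I will also note the converse. The Euler characteristic is a homotopy invariant, being the alternating sum of ranks of homology. Hence \(\chi(K_n)\) and the homotopy type of \(K_n\) determine each other. Equivalently, \(\bigvee^{r}S^2\simeq\bigvee^{s}S^2\) forces \(r=s\), by comparing \(H_2\).

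There is no real obstacle here. The only point needing care is that the wedge has a nonnegative number of summands and that distinct \(r\) give distinct homotopy types. Both follow immediately from \Cref{cor:homology-concentrated}.
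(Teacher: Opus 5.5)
Your proposal is correct and follows the paper's proof: the paper likewise just invokes Theorem~\ref{thm:main-wedge} to conclude that \(K_n\) is a wedge of exactly \(\chi(K_n)-1\) copies of \(S^2\), so its homotopy type depends only on \(\chi(K_n)\). Your added converse, that \(\chi\) is a homotopy invariant so the two determine each other, is a harmless strengthening that the statement does not require.
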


\begin{proof}
By Theorem~\ref{thm:main-wedge}, the homotopy type of \(K_n\) is that of a wedge of exactly \(\chi(K_n)-1\) copies of \(S^2\), and hence depends only on \(\chi(K_n)\).
\end{proof}

\begin{remark}\label{rem:high-dimensional-simplices-collapse}
Theorem~\ref{thm:main-wedge} exhibits a clear contrast between local combinatorics and global topology. Although the clique complex \(K_n\) contains simplices of arbitrarily high dimension as \(n\) grows, its homotopy type is always that of a wedge of \(2\)-spheres.
\end{remark}

\subsection{Summary}

The main theorem has now been proved: for every \(n\), the clique complex of the partition graph is homotopy equivalent to a wedge of \(2\)-spheres, and the number of spheres is \(\chi(K_n)-1\). The remaining task is therefore numerical rather than qualitative: to understand the Euler characteristic \(\chi(K_n)\), compute small examples, and record the resulting integer sequences. This is the subject of the final section.

\section{Euler characteristic, examples, and integer sequences}

The qualitative classification of the clique complex \(K_n\) has now been completed: by Theorem~\ref{thm:main-wedge}, it is always a wedge of \(2\)-spheres. Thus the remaining problem is numerical rather than qualitative. The number of spheres is
\[
b_n=\chi(K_n)-1,
\]
so understanding the Euler characteristic \(\chi(K_n)\) is equivalent to understanding the precise homotopy type of \(K_n\).

\subsection{Euler characteristic as the decisive invariant}

Since
\[
K_n \simeq \bigvee^{\,b_n} S^2,
\qquad
b_n=\chi(K_n)-1,
\]
the Euler characteristic completely determines the homotopy type of \(K_n\). In particular, any explicit formula, recurrence, or effective counting scheme for \(\chi(K_n)\) immediately yields the number of spheres in the wedge decomposition.

By definition,
\[
\chi(K_n)=\sum_{p\ge 0}(-1)^p f_p(K_n),
\]
where \(f_p(K_n)\) denotes the number of \(p\)-dimensional simplices of \(K_n\). Since simplices of \(K_n\) are exactly cliques in \(G_n\), we may also write
\[
\chi(K_n)=\sum_{r\ge 1}(-1)^{r-1} c_r(n),
\]
where \(c_r(n)\) is the number of cliques of size \(r\) in \(G_n\).

\begin{proposition}\label{prop:chi-counting-schemes}
The Euler characteristic \(\chi(K_n)\) may be computed either from the alternating sum of clique numbers in \(G_n\), or from the nerve \(N_n\) of the canonical cover. In particular,
\[
\chi(K_n)=\chi(N_n).
\]
\end{proposition}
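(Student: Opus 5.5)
The proposition has two parts: the clique-count formula and the equality \(\chi(K_n)=\chi(N_n)\). The first part is definitional. The second follows from homotopy invariance.

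For the first part, the \(p\)-dimensional simplices of \(K_n=\Cl(G_n)\) are exactly the cliques of \(G_n\) with \(p+1\) vertices. So \(f_p(K_n)=c_{p+1}(n)\), and reindexing the defining alternating sum \(\sum_{p\ge 0}(-1)^p f_p(K_n)\) with \(r=p+1\) gives \(\chi(K_n)=\sum_{r\ge 1}(-1)^{r-1}c_r(n)\). All sums are finite because \(\Par(n)\) is finite, so \(K_n\) is a finite simplicial complex.

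For the second part, the plan is as follows.

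\begin{enumerate}
\item Check finiteness. The cover \(\mathcal C_n\) is a finite family of simplices of \(K_n\), so its nerve \(N_n\) is a finite simplicial complex and its Euler characteristic \(\chi(N_n)=\sum_p(-1)^p f_p(N_n)\) is defined.
\item Invoke Corollary~\ref{cor:Kn-nerve-equivalence}, which gives \(K_n\simeq N_n\).
\item Rewrite the Euler characteristic of any finite simplicial complex as the alternating sum of the ranks of its rational homology groups. This is the standard identity relating the simplicial chain complex to its homology.
\item Since rational homology is a homotopy invariant, conclude \(\chi(K_n)=\chi(N_n)\).
\end{enumerate}

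The same invariance argument, applied to Theorem~\ref{thm:Kn-equivalent-order-complex-Jn}, also gives \(\chi(K_n)=\chi(\Delta(\mathcal J_n))\). This could be mentioned as an additional counting scheme.

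There is essentially no obstacle here. The only points needing care are finiteness, so that the alternating sums are defined and the homology-rank formula applies, and attributing the homotopy equivalence to the nerve theorem as verified in Theorem~\ref{thm:canonical-cover-good}.

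If a direct combinatorial check is preferred, one could instead count simplices of \(N_n\) by inclusion–exclusion over the anchor cover \(\mathcal A_n\). I would not pursue this, since the homotopy-invariance argument is immediate.
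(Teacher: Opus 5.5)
Your proposal is correct and matches the paper's proof. The clique-count formula is the definition of \(\chi(K_n)\), reindexed via \(f_p(K_n)=c_{p+1}(n)\), and \(\chi(K_n)=\chi(N_n)\) follows from \(K_n\simeq N_n\) (Corollary~\ref{cor:nerve-equivalence}) together with homotopy invariance of the Euler characteristic for finite complexes. Your finiteness check and your justification of that invariance through the Euler--Poincar\'e formula with rational Betti numbers only make explicit what the paper leaves implicit.
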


\begin{proof}
The first formula is simply the definition of the Euler characteristic of a finite simplicial complex. The second follows from Corollary~\ref{cor:nerve-equivalence}, since Euler characteristic is a homotopy invariant for finite CW-complexes.
\end{proof}

\begin{remark}\label{rem:chi-two-viewpoints}
Proposition~\ref{prop:chi-counting-schemes} gives two complementary ways to approach \(\chi(K_n)\): directly, through clique counts in the partition graph, and indirectly, through the overlap combinatorics of the maximal simplices encoded by the nerve.
\end{remark}

\subsection{The numbers \texorpdfstring{\(b_n\)}{b\_n}}

The most natural numerical invariant arising from our theorem is the sequence
\[
b_n:=\chi(K_n)-1.
\]
By Theorem~\ref{thm:main-wedge} and Corollary~\ref{cor:homology-concentrated}, this integer has several equivalent interpretations:
\[
b_n=\operatorname{rank}\widetilde H_2(K_n),
\]
and it is also the number of \(2\)-spheres in the wedge decomposition of \(K_n\).

\begin{corollary}\label{cor:bn-interpretations}
For every \(n\ge 1\), the integer \(b_n\) is simultaneously:
\begin{enumerate}
    \item the number of \(2\)-spheres in the wedge decomposition of \(K_n\);
    \item the rank of the reduced homology group \(\widetilde H_2(K_n)\);
    \item the shifted Euler characteristic \(\chi(K_n)-1\).
\end{enumerate}
\end{corollary}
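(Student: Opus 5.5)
This corollary is a restatement of results already proved, so the plan is to verify each of the three descriptions against the definition \(b_n:=\chi(K_n)-1\). Item~(3) holds by definition.

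For item~(1), I will invoke Theorem~\ref{thm:main-wedge}. It gives \(K_n\simeq\bigvee^{\,\chi(K_n)-1}S^2\), so the number of \(2\)-spheres appearing there is \(\chi(K_n)-1=b_n\). I should also note that this number is well defined, independent of the chosen equivalence. The reason is that \(\widetilde H_2\bigl(\bigvee^{r}S^2\bigr)\cong\mathbb Z^r\), so \(r\) is a homotopy invariant of the wedge. This observation links item~(1) to item~(2).

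For item~(2), I will cite Corollary~\ref{cor:homology-concentrated}, equivalently Proposition~\ref{prop:H2-rank}, which gives \(\widetilde H_2(K_n)\cong\mathbb Z^{\chi(K_n)-1}\). Hence \(\operatorname{rank}\widetilde H_2(K_n)=b_n\).

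There is no genuine obstacle here. The only point needing a sentence is the well-definedness of the sphere count in item~(1), which is handled by the homology computation of a wedge of spheres.
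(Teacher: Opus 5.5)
Your proposal is correct and matches the paper's proof, which simply cites Theorem~\ref{thm:main-wedge} and Proposition~\ref{prop:H2-rank}. Your extra remark, that the sphere count is well defined because \(\widetilde H_2\bigl(\bigvee^{r}S^2\bigr)\cong\mathbb Z^r\), is a small addition that the paper leaves implicit.
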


\begin{proof}
Immediate from Theorem~\ref{thm:main-wedge} and Proposition~\ref{prop:H2-rank}.
\end{proof}

\subsection{Small values}

To make the theorem concrete, we record the first values of \(\chi(K_n)\) and \(b_n\). These were obtained by direct computer enumeration of cliques in the partition graph \(G_n\).

\begin{table}[htbp]
\centering
\begin{tabular}{c|c|c}
\(n\) & \(\chi(K_n)\) & \(b_n=\chi(K_n)-1\)\\
\hline
1 & 1 & 0\\
2 & 1 & 0\\
3 & 1 & 0\\
4 & 1 & 0\\
5 & 1 & 0\\
6 & 1 & 0\\
7 & 1 & 0\\
8 & 2 & 1\\
9 & 3 & 2\\
10 & 6 & 5\\
11 & 11 & 10\\
12 & 20 & 19\\
13 & 33 & 32\\
14 & 56 & 55\\
15 & 88 & 87\\
16 & 138 & 137\\
17 & 208 & 207\\
18 & 311 & 310\\
19 & 452 & 451\\
20 & 653 & 652\\
21 & 922 & 921\\
22 & 1294 & 1293\\
23 & 1788 & 1787\\
24 & 2454 & 2453\\
25 & 3325 & 3324
\end{tabular}
\caption{Initial values of \(\chi(K_n)\) and \(b_n\).}
\label{tab:chi-bn}
\end{table}

Thus the first values of the two principal sequences are
\[
\begin{aligned}
(\chi(K_n))_{n=1}^{25}
={}&1,1,1,1,1,1,1,2,3,6,11,20,33,56,88,138,208,311,\\
   &452,653,922,1294,1788,2454,3325,
\end{aligned}
\]
and
\[
\begin{aligned}
(b_n)_{n=1}^{25}
={}&0,0,0,0,0,0,0,1,2,5,10,19,32,55,87,137,207,310,\\
   &451,652,921,1293,1787,2453,3324.
\end{aligned}
\]

In particular, \(K_n\) is contractible for \(1\le n\le 7\), while the first nontrivial homotopy type occurs at \(n=8\), where
\[
K_8\simeq S^2.
\]

\subsection{Examples}

The first values in Table~\ref{tab:chi-bn} illustrate the theorem concretely.

\begin{example}\label{ex:small-contractible}
For \(1\le n\le 7\), one has
\[
\chi(K_n)=1,
\qquad
b_n=0,
\]
so \(K_n\) is contractible.
\end{example}

\begin{example}\label{ex:first-sphere}
For \(n=8\), one has
\[
\chi(K_8)=2,
\qquad
b_8=1,
\]
hence
\[
K_8\simeq S^2.
\]
\end{example}

\begin{example}\label{ex:first-bouquets}
For \(n=9\) and \(n=10\), one finds
\[
K_9\simeq S^2\vee S^2,
\qquad
K_{10}\simeq \bigvee^{5} S^2.
\]
\end{example}

These examples emphasize that once the Euler characteristic is known, the homotopy type follows immediately.

\subsection{Integer sequences}

Several natural integer sequences arise from the combinatorics and topology of the complexes \(K_n\). The most immediate are:
\begin{enumerate}
    \item the Euler characteristics \((\chi(K_n))_{n\ge 1}\);
    \item the shifted Euler characteristics \((b_n)_{n\ge 1}\);
    \item the numbers of maximal star-simplices;
    \item the numbers of maximal top-simplices;
    \item the simplex counts \(f_p(K_n)\), whenever these admit uniform descriptions.
\end{enumerate}

For the two principal sequences, the initial values are
\[
\begin{aligned}
(\chi(K_n))_{n=1}^{25}
={}&1,1,1,1,1,1,1,2,3,6,11,20,33,56,88,138,208,311,\\
   &452,653,922,1294,1788,2454,3325,
\end{aligned}
\]
and
\[
\begin{aligned}
(b_n)_{n=1}^{25}
={}&0,0,0,0,0,0,0,1,2,5,10,19,32,55,87,137,207,310,\\
   &451,652,921,1293,1787,2453,3324.
\end{aligned}
\]

At present we do not record OEIS identifications in the body of the paper, since such identifications should be checked only after fixing the indexing convention and the full initial tables. This is particularly important because \((b_n)\) differs from \((\chi(K_n))\) by the constant shift \(1\).

\begin{remark}\label{rem:oeis-caution}
Any comparison with OEIS entries should be made only after the indexing convention has been normalized and after distinguishing clearly between \(\chi(K_n)\) and \(b_n=\chi(K_n)-1\).
\end{remark}

\subsection{Concluding numerical perspective}

The topological classification obtained in this paper reduces the study of \(K_n\) to a numerical problem: the determination of \(\chi(K_n)\). This problem is interesting in its own right. It is rooted in the combinatorics of cliques in the partition graph, but after Theorem~\ref{thm:main-wedge} it also acquires a direct homological and homotopical meaning. For general enumerative-combinatorial background on generating functions, partition statistics, and sequence questions, see also Stanley~\cite{Stanley2011}.

Accordingly, any future progress on formulas, recurrences, asymptotics, or sequence identifications for \(\chi(K_n)\) and \(b_n\) will immediately sharpen the topological picture established here.

\section*{Acknowledgements}
The author acknowledges the use of ChatGPT (OpenAI) for discussion, structural planning, and editorial assistance during the preparation of this manuscript. All mathematical statements, proofs, computations, and final wording were checked and approved by the author, who takes full responsibility for the contents of the paper.

\bibliographystyle{plain}

\end{document}